\documentclass[reqno,11 pt]{amsart}
\usepackage{amsmath,amssymb,latexsym}
\usepackage[colorlinks=true, linkcolor={black}, citecolor=purple]{hyperref}
\usepackage{amsthm}
\usepackage{comment}
\usepackage{color}  
\usepackage[all]{xy}
\usepackage{tikz-cd}
\usepackage{multirow}
\usepackage{multicol}
\usepackage{longtable}
\usepackage{caption}
\usepackage{float}
\usetikzlibrary{cd}
\usepackage[
    style=alphabetic,
    sorting=nyt,
    maxbibnames=99,
    giveninits=false,
    url=false,
    doi=false,
    isbn=false
]{biblatex}
\hypersetup{
    colorlinks=true, %set true if you want colored links
    linktoc=all,     %set to all if you want both sections and subsections linked
    linkcolor=blue,  %choose some color if you want links to stand out
    urlcolor=black,
}

\newcommand{\F}{{\mathbb{F}}}
\newcommand{\Z}{{\mathbb{Z}}}
\newcommand{\Q}{{\mathbb{Q}}}

\newcommand{\Sel}{\mathrm{Sel}}

 \newcommand{\genlegendre}[4]{%
  \genfrac{(}{)}{}{#1}{#3}{#4}%
  \if\relax\detokenize{#2}\relax\else_{\!#2}\fi
}
\newcommand{\legendre}[3][]{\genlegendre{}{#1}{#2}{#3}}

\makeatletter
\newcommand*{\rom}[1]{\expandafter\@slowromancap\romannumeral #1@}
\makeatother

\DeclareFontFamily{U}{wncy}{}
    \DeclareFontShape{U}{wncy}{m}{n}{<->wncyr10}{}
    \DeclareSymbolFont{mcy}{U}{wncy}{m}{n}
    \DeclareMathSymbol{\Sh}{\mathord}{mcy}{"58}

\theoremstyle{plain}
\newtheorem{theorem}{Theorem}[section]
\newtheorem{lemma}[theorem]{Lemma}

\newtheorem*{theorem*}{Theorem}
\newtheorem*{ass*}{Assumption}

\numberwithin{equation}{section}

\begin{document}
\title[Monsky Matrix and 2-Selmer rank]{Monsky Matrix and 2-Selmer rank}

\author[Shamik Das]{Shamik Das}
\email{shamikd@iitk.ac.in}

\author[Sudipa Mondal]{Sudipa Mondal}
\email{sudipa.mondal123@gmail.com}

%\author[Somnath Jha]{Somnath Jha}
%\email{jhasom@iitk.ac.in}

\begin{abstract}
In this article, we produce infinite families of non-congruent numbers in the residue class of $1,2,$ and $3$ modulo $8$ with arbitrarily many triples or quadruples prime factors. In short, we use Monsky matrix to show that the $2$-Selmer rank of the corresponding congruent number elliptic curve is zero. We also establish some quantitative results to conclude that each such family contains infinitely many non-congruent numbers.

%Monsky matrix and non-congruent number. Li and Qin \cite{Li-Qin}
 \end{abstract}
	
    \subjclass [2020]{Primary : 11G05, 11C20; 15A24 Secondary : 15A09;  11A15}
    \keywords {congruent number, elliptic curve, matrix over finite field, $2$-Selmer rank}

\maketitle

\section{Introduction}
A rational number $n$ is called a congruent number if it appears as the area of a right-angled triangle. More precisely, $n$ is said to be a congruent number if there exist rational numbers $a,b,c$ such that $a^2+b^2=c^2$ and $n=\frac{1}{2}ab$. It is easy to see that $n$ is a congruent number if and only if $nd^2$ is a congruent number, where $d$ is any number. Hence we only focus on positive square-free integers. It turns out that the congruent number property is also equivalent to existence of a non-trivial solution (of infinite order) of the elliptic curve $E_n: y^2=x^3-n^2x$ over $\Q$. It is well-known that there is a one-to-one correspondence between the following two sets:
 \begin{equation*}
    \left\{(a,b,c): a^2+b^2=c^2,\; n=\frac{1}{2}ab \right\}, \, \, \,\text{ and }~~~ \{(x,y): y^2=x^3-n^2x,\; y\neq 0\}.
\end{equation*}
The elliptic curve $E_n$ is known as the congruent number elliptic curve. By Mordell-Weil theorem, we know that the set of rational points $E_n(\Q)$ is finitely generated abelian group, and we denote the Modell-Weil rank of $E_n(\mathbb{Q})$ by $r(n)$. The bridge between the elliptic curves and the congruent number problem is the following:
$$n \text{ is a congruent number} \iff r(n) \geq 1.$$
In short, a positive square-free integer $n$ is called a non-congruent number if and only if Mordell-Weil rank $r(n)$ of the corresponding congruent number elliptic curve $E_{n}(\mathbb{Q})$ is zero.

Let $L(E_n,s)$ be the Hasse-Weil $L$-function in the complex variable $s$ attached to $E_n$ which is conjectured to be analytically continued in the whole complex plane. Let the order of vanishing of $L(E_n,s)$ at $s=1$ is denoted by $R(n)$, which is called the analytic rank of $E_n(\Q)$. Then the first part of the Birch and Swinnerton-Dyer (BSD) conjecture predicts that $r(n)=R(n)$. Further calculations by Birch and Stephen \cite{bs} showed that \begin{equation*}
R(n) \equiv
\begin{cases}
      0 \pmod 2 ~~\text{ if } & n \equiv 1,2,3 \pmod 8, \\
     1 \pmod 2 ~~ \text{ if } & n \equiv 5,6,7 \pmod 8.  
\end{cases}  
\end{equation*}
Therefore assuming BSD conjecture, one can predict that the integers $n \equiv 5,6,7 \pmod 8$ are congruent numbers. Hence the cases $n \equiv 1,2,3 \pmod8$ become more interesting. Although there is no algorithm to determine a number to be congruent or not, many authors studied the congruent number problem and obtained significant results. In 1855, Genocchi \cite{genocchi1855note} considered primes of the form $8k+3$ and proved that they are non-congruent numbers. Furthermore, in the same article, Genochhi proved that the numbers $2p$ where the primes $p$ are of the form $8k+5$ are non-congruent numbers.  In 1975, Lagrange \cite{lagrange} considered certain family of numbers which are product of either two primes or three primes and provided a criterion for them to be non-congruent numbers. In \cite{dm-joaa}, \cite{liqin}, \cite{zhang}, the authors provide  criteria to construct families of non-congruent numbers involving the $2$-part of the class number of certain imaginary quadratic fields. Their proofs heavily rely on the $2$-descent of elliptic curves. In another direction, the authors in \cite{weidong}, \cite{ds},  \cite{Das-Saikia-Rocky}, \cite{ds2}, \cite{junguklee},  \cite{Li-Qin}, \cite{lindsey2}, \cite{lindsey} present a method that generates new families of non-congruent numbers from existing families of non-congruent numbers. They show that infinitely many new non-congruent numbers can be constructed by appending primes of a certain form to existing non-congruent numbers whose corresponding congruent elliptic curves have the 2-Selmer rank equal to zero. The results can be used to generate even non-congruent numbers with prime factors in certain odd congruence class modulo eight. The authors rely on Monsky's formula to calculate the 2-Selmer rank of the congruent elliptic curve.

%In \cite{Li-Qin}, \cite{ds}, \cite{ds2}, \cite{Das-Saikia-Rocky} the authors found a series of family of non-congruent numbers which are product of arbitrarily many primes. The authors in \cite{zhang}, \cite{dm-joaa} provided some criterion to construct non-congruent numbers. Recent works by \cite{lindsey}, \cite{junguklee}, \cite{weidong}, \cite{lindsey2} are also about finding families of non-congruent numbers.

In this article, we produce families of non-congruent numbers with arbitrarily many triples or quadruples prime factors along with some relation involving the Legendre symbols of the primes by showing that the $2$-Selmer rank of the corresponding congruent number elliptic curve is zero. Our results have been stated in \S\ref{sott}.

\textbf{Methodology and structure of the article:} It is well-known that the Modell-Weil rank $r(n)$ is bounded by the $2$-Selmer rank (see \S \ref{preli} for details). To prove a square-free positive integer to be a non-congruent number, it is enough to show that the $2$-Selmer rank for the corresponding elliptic curve $E_n$ is zero. We relate the $2$-Selmer rank of $E_n$ with the rank of the Monsky matrix as in \cite[Appendix]{hb} and prove that the corresponding Monsky matrix has full rank to construct families of non-congruent numbers. Finally we show that density of such families to be positive which asserts that there are infinitely many non-congruent numbers in these families. 

The sturcture of this paper is as follows: In \S \ref{sott}, we present a sequence of theorems that yields families of non-congruent numbers. In \S \ref{preli}, we study the Modell-Weil rank and $2$-Selmer rank for the congruent number elliptic curve, and recall the relation between the $2$-Selmer rank and rank of the associated Monsky matrix. We gather several basic results in  \S  \ref{some pre lemmas} and using these results, we prove the main theorem in \S \ref{proof of the theorems}. Finally, \S \ref{infinite families and examples} presents asymptotic results on the number of non-congruent numbers in these families, along with explicit examples (see Table \ref{tab:examples}) of such non-congruent numbers. \\

\noindent{\it Acknowledgement}: The authors would like to thank Prof. Somnath Jha for his suggestions and comments. The first author acknowledges the support of the DST-INSPIRE Faculty Fellowship\\ DST/INSPIRE/04/2024/004189 at IIT Kanpur. The second author acknowledges the support provided by the Institute Postdoctoral fellowship at IIT Madras. % The authors also thank Professor Somnath Jha for useful discussion.

\section{Statement of main results}\label{sott}
This section is devoted to stating our main results. Let $\legendre{k}{l}$ denote the Legendre symbol, where $l$ is an odd prime and $k$ is any integer. Suppose that $p, q, r$ and $s$ are primes. Our results may be viewed as generalizations of the following classical results of Lagrange \cite{lagrange}:

\begin{itemize}
    \item[(1)] If $(p, q, r) \equiv (1, 5, 7) \pmod{8}$ (respectively, $(p, q, r) \equiv (3, 5, 5) \pmod{8}$) and 
    $\legendre{p}{q} = \legendre{p}{r} = -1$, then $pqr$ is not a congruent number. 
    These are generalized in Theorem~\ref{157} (respectively, Theorem~\ref{355}).  

    Moreover, if $(p, q, r) \equiv (5, 3, 3) \pmod{8}$ with 
    $\legendre{p}{q} = \legendre{p}{r} = -1$, then $2pqr$ is not a congruent number; 
    this is generalized in Theorem~\ref{533}.

    \item[(2)] If $(p, q, r) \equiv (7, 7, 3) \pmod{8}$ and 
    $\legendre{p}{q} = \legendre{q}{r} = \legendre{r}{p}$, then $pqr$ is not a congruent number. 
    This result is generalized in Theorem~\ref{377}.

    \item[(3)] If $(p, q) \equiv (1, 3)$ or $(5, 7) \pmod{8}$ with 
    $\legendre{p}{q} = -1$, then $pq$ is not a congruent number; 
    see Theorem~\ref{1357} for the generalization.  

    Similarly, if $(p, q) \equiv (1, 5)$ or $(3, 7) \pmod{8}$ with 
    $\legendre{p}{q} = -1$, then $2pq$ is not a congruent number; 
    this is generalized in Theorem~\ref{2times1357}.
\end{itemize}

We prove the following results:

\begin{theorem}\label{157}
Let $p_1,p_2,\ldots ,p_t$; $q_1, q_2, \ldots ,q_t$; and $r_1, r_2, \ldots, r_t$ be distinct primes such that $(p_i, q_i, r_i)\equiv (1,5,7) \pmod 8$ for $1 \leq i \leq t$. Assume that $\alpha \in \{ 1, -1\}$ and 
\begin{enumerate}
	\item[(a)] $\legendre{p_i}{q_i}= \legendre{p_i}{r_i}= -1$ for all $i$,
	\item[(b)] $\legendre{p_i}{p_j}=\legendre{q_i}{q_j}= 1$ and  $\legendre{r_i}{r_j}=\alpha$ for $i < j$,
	\item[(c)] $\legendre{p_i}{q_j} = \legendre{p_i}{r_j}=\legendre{q_i}{r_j}=1$ if $i\neq j$.
\end{enumerate}
Then $n= \prod\limits_{i=1}^{t}p_iq_ir_i$ is not a congruent number. 
\end{theorem}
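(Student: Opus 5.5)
The plan is to use the criterion recalled in \S\ref{preli}: for odd square-free $n$ with $k$ prime factors, the $2$-Selmer rank of $E_n$ equals $2k-\operatorname{rank}M_n$, where
\[
M_n=\begin{pmatrix} A+D_2 & D_2\\ D_2 & A+D_{-2}\end{pmatrix}
\]
is the Monsky matrix over $\F_2$. Here $A$ is the additive Legendre-symbol matrix: $A_{ij}=1$ iff $\legendre{p_j}{p_i}=-1$ for $i\neq j$, and $A_{ii}=\sum_{j\neq i}A_{ij}$. For an integer $u$, $D_u=\mathrm{diag}\bigl([u/p_i]\bigr)$. So it suffices to show $\det M_n=1$ in $\F_2$, with $k=3t$. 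I will order the primes as $p_1,\dots,p_t,q_1,\dots,q_t,r_1,\dots,r_t$, so $A$ and $M_n$ split into $3\times 3$ arrays of $t\times t$ blocks.

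\textbf{Step 1: the diagonal matrices.} From the residues mod $8$:
\begin{itemize}
\item $[2/p_i]=[-2/p_i]=0$;
\item $[2/q_i]=[-2/q_i]=1$;
\item $[2/r_i]=0$ and $[-2/r_i]=1$.
\end{itemize}
Hence $D_2=\mathrm{diag}(0,I,0)$ and $D_{-2}=\mathrm{diag}(0,I,I)$.

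\textbf{Step 2: the matrix $A$.} I will use quadratic reciprocity to compute it from (a)--(c). The $p_i,q_i$ are $1\bmod 4$, so their symbols are symmetric, and each $r$ is symmetric with them. For $r_i,r_j$ with $i<j$ reciprocity gives $[r_j/r_i]\neq[r_i/r_j]$. So the $rr$ off-diagonal block is a strictly upper triangular matrix filled with $a=[\alpha]$ plus a strictly lower one filled with $1+a$, i.e.\ a ``tournament'' block $T$.

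The remaining entries:
\begin{itemize}
\item The $pq$ and $pr$ blocks are $I$ (and so are their transposes).
\item The $qr$ block is $\mathrm{diag}(\varepsilon_i)$ with $\varepsilon_i=[q_i/r_i]$, which the hypotheses do not prescribe.
\item The $pp$ and $qq$ blocks vanish.
\end{itemize}
For the diagonal of $A$: each $p_i$ has diagonal entry $0$. Each $q_i$ has $1+\varepsilon_i$. Each $r_i$ has $1+\varepsilon_i+(\text{row sum of }T)_i$.

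\textbf{Step 3: eliminate the $p$ rows and columns.} In $M_n$, the two $p$-rows and $p$-columns contain identity blocks linking $p$ to $q$ and $r$. I will do row and column operations: add multiples of the $p$-row blocks to clear the $q$- and $r$-columns, and symmetrically for columns. Block-wise this is Schur-complement elimination using the invertible identity blocks.

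The expected outcome is that $\det M_n$ equals, up to sign (irrelevant mod $2$), the determinant of a matrix in which the $\varepsilon_i$ cancel. The $\varepsilon_i$ enter symmetrically in the two halves, and $D_2$ and $D_{-2}$ differ exactly by $\mathrm{diag}(0,0,I)$. What remains should be, roughly, $\det(T+\Delta)$ or $\det(I+T+\Delta)$ for a diagonal $\Delta$ built from the row sums of $T$.

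\textbf{Step 4 (main obstacle): the tournament block.} I must show this last $t\times t$ determinant is $1$ in $\F_2$, for both $a=0$ and $a=1$. I would try first to use the preliminary lemmas of \S\ref{some pre lemmas} on matrices with constant upper and lower triangular parts. Failing that, I would subtract consecutive rows: this turns the matrix into a bidiagonal one whose determinant is a product of diagonal entries. The row sums of $T$ are $a(t-i)+(1+a)(i-1)$, and these alternate in parity in a controlled way, so the reduced matrix should be unitriangular mod $2$.

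The delicate point is bookkeeping: checking that the $\varepsilon_i$ really drop out and that the parity of $t$ does not spoil invertibility. If they do not drop out, I would instead expand along the $q$ block, where $D_2=I$ makes the relevant Schur complement directly computable.
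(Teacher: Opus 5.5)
\textbf{There is a genuine gap: the residual you predict for Step 3 is wrong, and the Step 4 target is a singular matrix.}

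Your Steps 1 and 2 are correct and agree with the paper's computation. You get $D_2=\mathrm{diag}(0,I,0)$ and $D_{-2}=\mathrm{diag}(0,I,I)$. In $A$, the $pq$ and $pr$ blocks and their transposes are $I$, the $qr$ and $rq$ blocks are $\mathrm{D}_{qr}=\mathrm{diag}(\varepsilon_i)$, and the $rr$ block is $I+\mathrm{D}_{qr}+\mathrm{T}_{r,0}$. Here $\mathrm{T}_{r,0}$, which is your $T+\Delta$, equals $\mathrm{U}_0$ or $\mathrm{L}_0$. (With $A_{ij}=[p_j/p_i]$, the strictly upper part of the $rr$ block is filled with $[-\alpha]=1+a$, not $a$; this is harmless.) Eliminating through the identity blocks is also how the paper proceeds.

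Pivoting on the identity blocks attached to the $p$-rows and $p$-columns removes only four of the six block rows and columns: each $p$ block goes together with a $q$ partner. Concretely, in your ordering, add block row $2$ to block row $3$ and block row $5$ to block row $6$. Then add block column $2$ to block column $3$ and block column $5$ to block column $6$. Now block rows $1,4$ and block columns $1,4$ each contain a single $I$, and over $\F_2$ what survives is
\[
\det M_n=\det\begin{bmatrix} I+\mathrm{T}_{r,0} & I\\ I & \mathrm{T}_{r,0}\end{bmatrix}=\det\bigl(I+\mathrm{T}_{r,0}+\mathrm{T}_{r,0}^2\bigr).
\]
So the $\varepsilon_i$ do cancel, as you hoped. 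But the $t\times t$ residual is $I+\mathrm{T}_{r,0}+\mathrm{T}_{r,0}^2$, not $T+\Delta$ or $I+T+\Delta$.

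This matters because the matrices you plan to attack in Step 4 are singular. $\mathrm{T}_{r,0}$ is triangular with diagonal entries $t-i$ (or $i-1$), and these alternate in parity. Hence $\det\mathrm{T}_{r,0}=0$ for every $t$, and $\det(I+\mathrm{T}_{r,0})=0$ for every $t\ge 2$. For example, for $t=2$ you have $\mathrm{U}_0\equiv_2\begin{bmatrix}1&1\\0&0\end{bmatrix}$. No row reduction will make these unitriangular.

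The missing idea is that $\mathrm{T}_{r,0}$ is idempotent mod $2$: $\mathrm{T}_{r,0}^2\equiv_2\mathrm{T}_{r,0}$. This is Lemma \ref{itemize}(h). For $i<j$ the $(i,j)$ entry of $\mathrm{U}_0^2$ is $(t-i)+(t-j)+(j-i-1)\equiv 1$, and the diagonal entries are $(t-i)^2\equiv t-i$. It gives $I+\mathrm{T}_{r,0}+\mathrm{T}_{r,0}^2\equiv_2 I$ for every $t$ and both values of $\alpha$.

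This is exactly how the paper concludes, after reducing to $\det\begin{bmatrix}\mathrm{T}_{r,0}+\mathrm{T}_{r,0}^2 & I\\ I & I\end{bmatrix}$. In particular, the parity of $t$ plays no role, and no fallback expansion along the $q$ block is needed.
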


\begin{theorem}\label{355}
    Let $\alpha \in \{-1,1\}$ and $p_1,p_2,\ldots ,p_t$; $q_1, q_2, \ldots ,q_t$; and $r_1, r_2, \ldots, r_t$ be distinct primes such that $(p_i, q_i, r_i)\equiv (3,5,5) \pmod 8$ for $1 \leq i \leq t$.  Assume that 
    \begin{enumerate}
        \item[(a)] $\legendre{p_i}{q_i}= \legendre{p_i}{r_i} = -1 $ for all $i$,
        \item[(b)] $\legendre{q_i}{q_j} =\legendre{r_i}{r_j}=1, \quad \legendre{p_i}{p_j}=\alpha $ for $i<j$,
        \item[(c)] $\legendre{p_i}{q_j} = \legendre{p_i}{r_j}=\legendre{q_i}{r_j}=1$ if $i\neq j$.
    \end{enumerate}
    Then $n= \prod\limits_{i=1}^{t}p_iq_ir_i$ is not a congruent number.
\end{theorem}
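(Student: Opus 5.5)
The plan is to show that the $2$-Selmer rank of $E_n$ is zero by showing that the Monsky matrix attached to $n$ has full rank over $\F_2$, as recalled in \S\ref{preli} (following \cite[Appendix]{hb}). Here $n$ is odd with $k=3t$ prime factors. Order them as $p_1,\dots,p_t,q_1,\dots,q_t,r_1,\dots,r_t$.

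\textbf{Setting up the matrix.} Let $A=(a_{ij})$ be the $k\times k$ matrix over $\F_2$ with $a_{ij}$ the additive Legendre symbol $[\,\ell_j/\ell_i\,]$ for $i\neq j$ (equal to $1$ exactly when the symbol is $-1$) and $a_{ii}=\sum_{j\neq i}a_{ij}$. Let $D_u=\mathrm{diag}([u/\ell_i])$. For odd $n$ the Monsky matrix is
$$M=\begin{pmatrix} A^{T}+D_2 & D_2\\ D_2 & A+D_2\end{pmatrix},$$
and $s(n)=2k-\operatorname{rank} M$.

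Every prime here is $3$ or $5 \pmod 8$, so $\legendre{2}{\ell}=-1$ for all of them and $D_2=I$. Hence $M=\begin{pmatrix} A^T+I & I\\ I& A+I\end{pmatrix}$. Since $I$ commutes with everything, $\det M=\det\big((A^T+I)(A+I)+I\big)=\det(A^TA+A+A^T)$. So it suffices to show that $B:=A^TA+A+A^T$ is invertible over $\F_2$, or equivalently that $Bv=0$ forces $v=0$.

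\textbf{Computing $A$ in $t\times t$ blocks.} Quadratic reciprocity makes every pair symmetric except pairs $(p_i,p_j)$ with both primes $\equiv 3 \pmod 4$, which are antisymmetric.
\begin{itemize}
\item By (a), $a(p_i,q_i)=a(q_i,p_i)=a(p_i,r_i)=a(r_i,p_i)=1$.
\item By (b), the $q$-$q$ and $r$-$r$ blocks vanish off the diagonal.
\item By (c), all $p$-$q$, $p$-$r$ and $q$-$r$ entries with $i\neq j$ vanish.
\item The pair $(q_i,r_i)$ is not constrained. Put $\beta_i=a(q_i,r_i)=a(r_i,q_i)$.
\item By (b), the $p$-$p$ block off the diagonal is $U$ or $U^T$, where $U$ is the strictly upper triangular all-ones matrix. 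Which one depends on $\alpha$; in either case its transpose is the other.
\end{itemize}
The diagonal entries are then
$$a(p_i,p_i)=c_i,\qquad a(q_i,q_i)=a(r_i,r_i)=1+\beta_i,$$
where $c_i=i-1$ or $t-i \pmod 2$, again depending on $\alpha$.

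In block form $A=\begin{pmatrix} N & I & I\\ I& \Delta & \mathrm{diag}(\beta)\\ I & \mathrm{diag}(\beta)&\Delta\end{pmatrix}$ with $\Delta=\mathrm{diag}(1+\beta_i)$ and $N=\mathrm{diag}(c)+U$ (or $+U^T$). The only non-symmetric part is $N$, and $N+N^T=J+I$ with $J$ the all-ones matrix.

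\textbf{Kernel computation.} I would compute $B$ blockwise. The $q$ and $r$ blocks combine nicely: since $\Delta+\mathrm{diag}(\beta)=I$, the change of variables $x=v_q+v_r$, $y=v_q$ should decouple them. After that, I expect $Bv=0$ to reduce to $x$ being determined by $v_p$ and to a single $t\times t$ system in $v_p$ of the shape $\big(N^TN+N+N^T+cI\big)v_p=0$ for an explicit constant $c$. The $\beta_i$ should cancel, which has to be the case since the theorem places no condition on $\legendre{q_i}{r_i}$.

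Using $N+N^T=J+I$ and the explicit triangular structure, the remaining task is to show that this $t\times t$ matrix is nonsingular in both cases $\alpha=\pm1$. I would first try to conjugate by the bidiagonal matrix $I+$(shift), which turns $U$ into a shift, and then check that the result is unipotent triangular. Failing that, I would prove $\det\neq 0$ by induction on $t$, expanding along the last row and column; the case $t=1$ is Lagrange's result, which serves as a check.

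\textbf{Main obstacle.} The hardest step is this last reduction. The parities $c_i$ alternate with $i$, and the two choices $U$ versus $U^T$ give different diagonals, so the cancellation must be tracked carefully in both cases of $\alpha$.
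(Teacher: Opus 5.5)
There is a genuine gap at the very first step: the matrix you write down is not Monsky's matrix for odd $n$. The correct one (see \eqref{monskymat} and \cite[Appendix]{hb}) is, up to swapping the two block columns, $\begin{pmatrix} A+D_2 & D_2\\ D_2 & A+D_{-2}\end{pmatrix}$. You may transpose the whole matrix, but the second diagonal block must carry $D_{-2}$, and your $A+D_2$ there is not equivalent to $A^{\top}+D_{-2}$.

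To see the discrepancy, write $d$ for the indicator vector of the primes $\equiv 3 \pmod 4$. Reciprocity gives $A^{\top}=A+dd^{\top}+D_{-1}$, hence $A^{\top}+D_{-2}=A+D_2+dd^{\top}$. So your matrix differs from the transpose of the true one by the rank-one term $dd^{\top}$ in the lower-right block. This matters as soon as $d\neq 0$: already for $n=3$ your formula gives $\begin{pmatrix}1&1\\ 1&1\end{pmatrix}$, i.e.\ $s(3)=1$, whereas $s(3)=0$.

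In the present family the error is fatal, because your $B=A^{\top}A+A+A^{\top}$ is singular for every odd $t$. Indeed $A\mathbf{1}=0$ by the choice of the diagonal. The $j$-th column sum of $A$ is $d_j\sum_{i\neq j}d_i=d_j(t-1)$, so $B\mathbf{1}=A^{\top}\mathbf{1}=0$. Equivalently, carry out your own block reduction (using $\Delta+\mathrm{diag}(\beta)=I$ and a Schur complement on the $q,r$ part). You land on $\det B=\det(N^{\top}N+N+N^{\top})=\det(tJ+I)$, which vanishes for odd $t$. This uses $N^{\top}N\equiv_2(t-1)J$ and $N+N^{\top}=J+I$ (Lemma~\ref{itemize}(b),(c), with $\mathrm{T}_{p,0}$ playing the role of your $N$). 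So the $t\times t$ matrix you planned to show nonsingular is not. The $t=1$ check you propose (e.g.\ $n=3\cdot 5\cdot 101$) would have failed.

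With the correct matrix your strategy works at once and becomes the paper's proof. Since $D_2=I$ and $D_{-2}=\mathrm{diag}(O,I,I)$, one has $\det M_o=\det\big(I+(A+D_{-2})(A+I)\big)$. In both $A+I$ and $A+D_{-2}$, the $q,r$ part is $\begin{pmatrix}\mathrm{diag}(\beta)&\mathrm{diag}(\beta)\\ \mathrm{diag}(\beta)&\mathrm{diag}(\beta)\end{pmatrix}$ and the $p$-$q$, $p$-$r$ blocks are $I$. Hence the $\beta_i$ cancel and
\[
I+(A+D_{-2})(A+I)\equiv_2\begin{pmatrix} I & N & N\\ N+I & O & I\\ N+I & I & O\end{pmatrix},
\]
where the $(1,1)$ block uses $N^2\equiv_2 N$. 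A second Schur complement on the upper-left $I$ leaves $\begin{pmatrix}O&I\\ I&O\end{pmatrix}$, because $(N+I)N=N^2+N\equiv_2 0$. Thus the decisive fact is the idempotence of the $p$-$p$ block (Lemma~\ref{itemize}(h)), which holds for both values of $\alpha$. No analysis of $N^{\top}N$, no conjugation by a bidiagonal matrix, and no induction on $t$ is needed.
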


\begin{theorem}\label{377}
    Let $t$ be odd and $\alpha, \mu \in \{1,-1\}$. Consider the primes $r_1, r_2, \ldots, r_t$ such that $r_i \equiv 3 \pmod8$ for $1 \leq i \leq t$. Let $p, q \equiv 7 \pmod8$ be another distinct primes satisfying the following :
    \begin{enumerate}
        \item[(a)] $\legendre{p}{q}=\legendre{q}{r_i}=\legendre{r_i}{p}=\alpha$ for all $i$,
        \item[(b)] $\legendre{r_i}{r_j}= \mu$ for $i<j$. 
    \end{enumerate}
    Then $n=pq\prod\limits_{i=1}^{t} r_i$ is not a congruent number.
\end{theorem}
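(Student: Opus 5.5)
The plan is to prove that the $2$-Selmer rank of $E_n$ is zero by showing that the Monsky matrix attached to $n$ has full rank over $\F_2$. By the relation between $s(n)$ and the Monsky matrix recalled in \S\ref{preli} (from \cite[Appendix]{hb}), this forces $r(n)=0$.

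\textbf{Setting up the Monsky matrix.} Order the prime factors of $n$ as $p,q,r_1,\dots,r_t$, so $n$ is odd with $k=t+2$ prime factors. Write $[\cdot/\cdot]\in\F_2$ for the additive Legendre symbol. Let $A$ be the $k\times k$ matrix with off-diagonal entries $a_{ij}=[p_j/p_i]$ and diagonal entries $a_{ii}=\sum_{j\ne i}a_{ij}$. For $u\in\{-1,2,-2\}$ let $D_u=\mathrm{diag}([u/p_i])$. The Monsky matrix is
\[
M=\begin{pmatrix} A+D_2 & D_2\\ D_2 & A+D_{-2}\end{pmatrix},
\]
and $s(n)=2k-\operatorname{rank}M$.

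\textbf{Reducing to a $k\times k$ determinant.} All prime factors are $\equiv 3 \pmod 4$, so $D_{-1}=I$ and $D_2+D_{-2}=I$. Adding the first block row to the second gives $\begin{pmatrix} A+D_2 & D_2\\ A & A+I\end{pmatrix}$. Adding the second block column to the first then gives $\begin{pmatrix} A & D_2\\ I & A+I\end{pmatrix}$. Because the lower-left block is $I$, we get $\det M=\det\bigl(A(A+I)+D_2\bigr)$ over $\F_2$. It therefore suffices to show that $N:=A^2+A+D_2$ is invertible.

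\textbf{Computing $A$ and $D_2$ explicitly.} Since $p,q\equiv 7\pmod 8$ and $r_i\equiv3\pmod8$, we have $D_2=\mathrm{diag}(0,0,1,\dots,1)$. Quadratic reciprocity for primes $\equiv 3\pmod 4$ gives $a_{ji}=a_{ij}+1$ for $i\ne j$. Hence $A+A^{T}=J+I$ off the diagonal, where $J$ is the all-ones matrix. Set $\epsilon=[\alpha]$ and $m=[\mu]$ in $\F_2$. The conditions then fix every entry of $A$.
\begin{itemize}
\item For the pair $p,q$: $(p/q)=\alpha$ and $(q/p)=-\alpha$.
\item For $q$ and $r_i$: $(q/r_i)=\alpha$ and $(r_i/q)=-\alpha$.
\item For $p$ and $r_i$: $(r_i/p)=\alpha$ and $(p/r_i)=-\alpha$.
\item Among the $r$'s: $(r_i/r_j)=\mu$ and $(r_j/r_i)=-\mu$ for $i<j$.
\end{itemize}
In particular, the $r$-block of $A$ is a ``tournament'' matrix: upper triangle constant $m$ (or $m+1$ depending on the index convention), lower triangle its complement. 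Its diagonal is determined by $t-1$ and the row position. The rows of $p$ and $q$ couple to all the $r_i$ uniformly.

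\textbf{Main obstacle: invertibility of $N$.} This is where the work lies. The first attempt will exploit the symmetry of $N$ under permutations of the $r_i$ that respect the tournament structure. Write $T$ for the $r$-block of $A$ and use $T+T^{T}=J+I$ to simplify $T^2+T$ modulo $J$-terms. The goal is to show that on the hyperplane of vectors with coordinate sum zero in the $r$-part, $N$ acts as an invertible map, something like $I$ plus a nilpotent or $J$-multiple. The problem then reduces to a $3\times 3$ system on $\mathrm{span}(e_p,e_q,\mathbf 1_r)$. This small system will be checked case by case in $\alpha,\mu\in\{\pm1\}$, using that $t$ is odd, so that $J\mathbf{1}_r=\mathbf 1_r$ and the diagonal entries have controlled parity.

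If the direct computation of $N$ gets unwieldy, the fallback is to test the kernel of $M$ directly. Suppose $(x,y)$ lies in the kernel. Summing the equations over the $r$-indices and using the parity of $t$ should force the $r$-coordinates of $x,y$ to be constant. The remaining few equations then show they vanish. Either way, $\operatorname{rank}M=2k$ gives $s(n)=0$, hence $r(n)=0$, and $n$ is non-congruent.
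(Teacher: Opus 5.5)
\textbf{Verdict.} Your reduction is correct and genuinely different from the paper's, but the proposal stops exactly where the theorem's content lies: the invertibility of $N$ is never proved, and the tool you name for it cannot do the job.

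\textbf{What is right.} Since every prime factor is $\equiv 3 \pmod 4$, you correctly get $D_{-2}=D_2+I$. Your block operations then give $\det M=\det(A^2+A+D_2)$ over $\F_2$. The paper argues differently: it takes a Schur complement of $\mathrm M_o$ along its last identity block, permutes rows and columns, and checks a $4\times 4$ matrix, writing out only the case $\alpha=1$.

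\textbf{The gap.} Write $T_0$ for the paper's $\mathrm T_{r,0}$, so the $r$-block of $A$ is $I+T_0$ for both values of $\alpha$. Then the $r$-block of $N=A^2+A+D_2$ is $I+T_0+T_0^2$. The relation $T_0+T_0^{\top}=J+I$ that you plan to use says nothing about $T_0^2+T_0$.

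To see this, drop hypothesis (b) and take $t=3$ with the cyclic tournament $a_{r_1r_2}=a_{r_2r_3}=a_{r_3r_1}=1$. Then still $T_0+T_0^{\top}=J+I$, $T_0\mathbf 1=0$ and $\mathbf 1^{\top}T_0=0$. However, $T_0^2=T_0^{\top}$, so $T_0^2+T_0=J+I$. The $r$-block of $N$ is then $J$, and $N$ annihilates every $(0,0,v)$ with $\mathbf 1^{\top}v=0$. So no argument built only on the reciprocity relation and the parity of $t$ can succeed.

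Your fallback has the same defect. Summing the kernel equations over the $r$-indices yields a single scalar relation. By itself it cannot force the $r$-coordinates to be constant; that again needs (b).

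\textbf{How to close it.} The missing input is hypothesis (b), in the form of part (h) of Lemma~\ref{itemize}. Because the $r_i$ form a transitive tournament, $T_0\in\{\mathrm U_0,\mathrm L_0\}$ and $T_0^2=T_0$. Combine this with two more facts:
\begin{itemize}
\item $T_0\mathbf 1=0$, since each diagonal entry equals the off-diagonal sum of its row;
\item $\mathbf 1^{\top}T_0=(t-1)\mathbf 1^{\top}=0$, since $t$ is odd.
\end{itemize}
Hypothesis (a) gives
\[
A=\begin{pmatrix}1&1&0\\ 0&1&\mathbf 1^{\top}\\ \mathbf 1&0&I+T_0\end{pmatrix}\ (\alpha=1),\qquad
A=\begin{pmatrix}1&0&\mathbf 1^{\top}\\ 1&1&0\\ 0&\mathbf 1&I+T_0\end{pmatrix}\ (\alpha=-1).
\]
A direct computation in either case yields
\[
N=\begin{pmatrix}0&1&\mathbf 1^{\top}\\ 1&0&\mathbf 1^{\top}\\ \mathbf 1&\mathbf 1&I\end{pmatrix}.
\]
Its Schur complement with respect to $I$ is $\begin{pmatrix}t&1+t\\ 1+t&t\end{pmatrix}\equiv I_2 \pmod 2$, so $\det N=1$.

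No case split in $\alpha$ or $\mu$ is needed. Your hyperplane idea also works once $N$ is known: $N$ is the identity on $\{(0,0,v):\mathbf 1^{\top}v=0\}$, and it is invertible on $\mathrm{span}(e_p,e_q,\mathbf 1_r)$. With this computation inserted, your route is a complete proof, shorter and more uniform than the paper's.
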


\begin{theorem}\label{533}
Let $p_1,\dots,p_t$; $q_1,\dots,q_t$; and $r_1,\dots,r_t$ be distinct primes with 
$(p_i,q_i,r_i)\equiv (5,3,3)\pmod{8}$ for $1 \leq i \leq t$, and let 
$\alpha,\mu,\mu_1,\mu_2\in \{1,-1\}$. Assume that 
\begin{enumerate}
\item[(a)] $\legendre{p_i}{q_i}=\legendre{p_i}{r_i}=\alpha$, \, $\legendre{q_i}{r_i}=\mu$ for all $i$,
\item[(b)] $\legendre{p_i}{p_j}=1$, \ $\legendre{q_i}{q_j}=\mu_1$, \ $\legendre{r_i}{r_j}=\mu_2$ for all $i<j$,
\item[(c)] $\legendre{p_i}{q_j}=\legendre{p_i}{r_j}=\legendre{q_i}{r_j}=1$ for all $i\ne j$.
\end{enumerate}
Then $n=2\prod\limits_{i=1}^{t} p_iq_ir_i$ is not a congruent number if one of the following holds:
\begin{itemize}
\item[\textbf{Case A:}] $\alpha=1$, and
\begin{itemize}
\item[(i)] $\mu=1$, or
\item[(ii)] $\mu=-1$ with $\mu_1=\mu_2$ and $(t=1 \text{ or } t \text{ even})$, or $\mu_1\ne\mu_2$
.
 
\end{itemize}

\item[\textbf{Case B:}] $\alpha=-1$, and
\begin{itemize}
\item[(i)] $\mu=-1$, or
\item[(ii)] $\mu=1$ with $\mu_1=\mu_2$ and $(t=1 \text{ or } t \text{ even})$, or $\mu_1\ne\mu_2$.
\end{itemize}
\end{itemize}
\end{theorem}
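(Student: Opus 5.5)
Throughout, $n=2\prod_{i=1}^t p_iq_ir_i$ and $m=3t$ is the number of odd prime factors. We show the $2$-Selmer rank $s_2(n)$ of $E_n$ is zero, using Monsky's description of the $2$-Selmer group recalled in \S\ref{preli}: $s_2(n)=2m-\operatorname{rank}_{\F_2}M_n$, where $M_n$ is the $2m\times 2m$ Monsky matrix over $\F_2$. So it suffices to prove that $M_n$ is nonsingular.

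\textbf{Setting up the matrix.} Order the odd primes as $p_1,\dots,p_t,q_1,\dots,q_t,r_1,\dots,r_t$. Let $A$ be the $m\times m$ matrix with off-diagonal entries $A_{jk}=[\legendre{\ell_k}{\ell_j}]$ (additive notation, $[1]=0$, $[-1]=1$) and diagonal entries making each row sum zero. Let $D_u=\mathrm{diag}([\legendre{u}{\ell_j}])$ for $u=2,-1$. For even $n$, Monsky's matrix has the block form
\[
M_n=\begin{pmatrix} A+D_2 & D_2\\ D_2 & A^{T}+D_2+D_{-1}\end{pmatrix}.
\]
For $p_i\equiv5\pmod 8$ we have $[\legendre{2}{p_i}]=1$ and $[\legendre{-1}{p_i}]=0$. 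For $q_i,r_i\equiv 3\pmod 8$ we have $[\legendre{2}{\cdot}]=1$ and $[\legendre{-1}{\cdot}]=1$. Hence $D_2=I$ and $D_{-1}=\mathrm{diag}(0_t,I_t,I_t)$. By quadratic reciprocity, the only antisymmetric pairs are those among the $q$'s and $r$'s, since both are $3\pmod 8$. Hypotheses (a)--(c) then give $A$ explicitly. Only the $t$ diagonal triples $(p_i,q_i,r_i)$ carry the parameters $\alpha,\mu$. Within each class, the entries are the constants $0$ for the $p$'s, and $[\mu_1]$ or $[\mu_2]$ (transposed below the diagonal) for the $q$'s and $r$'s. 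All other cross entries vanish.

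\textbf{Reduction.} Since $D_2=I$ commutes with everything, block elimination gives
\[
\det M_n=\det\bigl((A+I)(A^T+I+D_{-1})-I\bigr),
\]
so we need this $m\times m$ matrix $N$ to be invertible over $\F_2$.

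Write each $t\times t$ block of $A$ in terms of $I$, $J$ (all ones) and the strict upper-triangular all-ones matrix $U$. The $q$--$q$ block is $[\mu_1](U+\text{transpose-twisted})$, which equals either $[\mu_1]U+[\mu_1]U^T$ or $[\mu_1]U+(1+[\mu_1])U^T$ depending on how reciprocity acts. The diagonal of each block is determined by the row sums, which involve $t-1$ copies of the class constant. This is where the parity of $t$ enters.

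The expected result is that $N$ (or $M_n$ itself, after reordering rows and columns) becomes block lower-triangular with $t$ diagonal $6\times6$ blocks, or has the shape $B\otimes I + C\otimes J$-type plus a triangular $U$ part. Its determinant can be computed by treating $J$ via the eigenvector $\mathbf 1$ (where $J$ acts as $t$) and its complement (where $J$ acts as $0$). On the complement, the matrix reduces to $t$ copies of a small block that involves $\alpha,\mu$ and possibly $U$. The $U$ terms should be handled by noting that $I+U$ is unipotent and invertible.

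\textbf{Case analysis.} In each case we check that the small determinants are $1$:

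\begin{itemize}
\item Case A(i) and Case B(i): the diagonal blocks are independent of $\mu_1,\mu_2$ and nonsingular.
\item Case A(ii) and Case B(ii): the diagonal blocks are singular unless the $U$- and $J$-contributions from $\mu_1,\mu_2$ fix them.
\item When $\mu_1\neq\mu_2$, the asymmetric $U$ term should give an invertible correction.
\item When $\mu_1=\mu_2$, invertibility holds exactly when the $\mathbf 1$-eigenspace contribution, which has the factor $t-1 \bmod 2$, does not cancel. This explains the condition $t=1$ or $t$ even.
\end{itemize}

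The main obstacle is this last bookkeeping step. It requires correctly computing the diagonal of $A$ with its parity-dependent row sums and controlling the non-commuting $U$, $U^T$ terms. First I would try a suitable change of basis (summing consecutive rows and columns, i.e.\ conjugating by $I+U$) to turn $U$ into a shift matrix. That should make $N$ block triangular, after which only the $t=1$ computation and the $\mathbf 1$-eigenvalue computation remain.
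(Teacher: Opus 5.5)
There is a genuine gap, and it is at the very first step: your block form for the even Monsky matrix is wrong. The matrix from Monsky's appendix, as recalled in \S\ref{preli}, is
\[
\mathrm{M}_e=\begin{pmatrix}\mathrm{D}_2 & \mathrm{E}+\mathrm{D}_2\\ \mathrm{E}^{\top}+\mathrm{D}_2 & \mathrm{D}_{-1}\end{pmatrix},
\]
and your $\begin{pmatrix}A+D_2 & D_2\\ D_2 & A^{T}+D_2+D_{-1}\end{pmatrix}$ is not equivalent to it. Already for $n=2p$ (where $A=0$) your determinant is $[\legendre{2}{p}]\,[\legendre{-1}{p}]$. That gives $s(6)=0$ and $s(10)=1$, whereas in fact $s(6)\geq r(6)=1$ and $s(10)=0$.

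The error also breaks the theorem's own base case. Take $t=1$ in Case A(i), e.g.\ $n=2\cdot5\cdot11\cdot19$. In the order $(p,q,r)$ one has $A=\begin{pmatrix}0&0&0\\0&1&1\\0&0&0\end{pmatrix}$. Your matrix $(A+I)(A^{T}+I+D_{-1})-I$ then has rank $1$, so your reduction yields $s(n)=2$ and cannot prove the statement. The correct Schur complement, obtained by pivoting on $\mathrm{D}_2=\mathrm{I}$, is $\mathrm{D}_{-1}+(\mathrm{E}^{\top}+\mathrm{I})(\mathrm{E}+\mathrm{I})$. It equals $\mathrm{I}$ in this example, and it is the matrix the paper analyses in both Case A and Case B.

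Your description of $A$ also misses the structure that drives the proof, and the case analysis is only anticipated, never carried out.

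\begin{itemize}
\item Since $q_i,r_j\equiv 3\pmod 4$, hypothesis (c) forces $\legendre{r_j}{q_i}=-1$ for $i\neq j$. So the $(q,r)$ block of $\mathrm{E}$ is all ones off the diagonal (it is $\mathrm{N}+\mathrm{I}+\mathrm{D}_{qr}$). These cross entries do not vanish, and they contribute $t-1$ to the diagonal row sums of the $q$-rows.
\item By antisymmetry, the off-diagonal part of the $q$--$q$ (resp.\ $r$--$r$) block is exactly the strictly upper or strictly lower all-ones matrix, according to $\mu_1$ (resp.\ $\mu_2$). There is no symmetric alternative $[\mu_1]U+[\mu_1]U^{T}$.
\end{itemize}

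Together with the row sums, this produces the paper's $\mathrm{T}_{q,t+1}$, $\mathrm{T}_{r,1}$ and $\mathrm{N}$ terms. The proof then evaluates the blocks of $\mathrm{D}_{-1}+(\mathrm{E}^{\top}+\mathrm{I})(\mathrm{E}+\mathrm{I})$ using $\mathrm{T}_{l,0}\mathrm{T}_{l,0}^{\top}\equiv_2\mathrm{O}$, $\mathrm{T}_{l,0}^{\top}\mathrm{T}_{l,0}\equiv_2(t-1)\mathrm{N}$, $\mathrm{T}_{l,0}\mathrm{N}\equiv_2\mathrm{O}$, $\mathrm{T}_{l,0}^{\top}+\mathrm{T}_{l,0}\equiv_2\mathrm{N}+\mathrm{I}$ and $\mathrm{U}_0^{\top}+\mathrm{L}_0\equiv_2(t+1)\mathrm{I}$. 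This reaches matrices such as $\begin{pmatrix}\mathrm{N}&(t+1)\mathrm{I}\\(t+1)\mathrm{I}&\mathrm{N}\end{pmatrix}$ or $\begin{pmatrix}\mathrm{N}&\mathrm{N}+\mathrm{I}\\ \mathrm{N}+\mathrm{I}&\mathrm{N}\end{pmatrix}$.

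For instance, in Case A(ii) with $\mu_1=\mu_2$, the condition ``$t=1$ or $t$ even'' arises because for odd $t$ the coupling $(t+1)\mathrm{I}$ vanishes. What remains is the rank-one block $\mathrm{N}$, which is invertible only for $t=1$. It does not come from a $(t-1)$-factor on the $\mathbf{1}$-eigenspace.

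None of this computation is in your proposal. The ``expected'' and ``should'' steps, and the conjugation by $I+U$, stand in for exactly the part that has to be proved.
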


\begin{theorem}\label{1357}
Let $p_1, p_2, \ldots, p_t$; $q_1, q_2, \ldots, q_t$; $r_1, r_2, \ldots, r_t$; and $s_1, s_2, \ldots, s_t$ be distinct primes satisfying $(p_i, q_i, r_i, s_i) \equiv (1,3,5,7) \pmod{8}$ for  $1 \leq i \leq t$, and let $\mu_1, \mu_2 \in \{-1,1\}$. Suppose that
\begin{enumerate}
    \item[(a)] $\legendre{p_i}{q_i}=\legendre{r_i}{s_i}=-1$, and $\legendre{p_i}{r_i}=\legendre{p_i}{s_i}=\legendre{q_i}{r_i}=\legendre{q_i}{s_i}=1$ for all $i$,
    %\item[(b)] $\legendre{q_i}{s_j}=1$ for all $1 \leq i,j \leq t$,
    \item[(b)] $\legendre{a_i}{b_j}=1$ for all $i \neq j$ and $a,b \in \{p,q,r,s\}$ with $a \neq b$ and $(a,b) \neq (s,q)$,
    \item[(c)] $\legendre{s_i}{s_j}=\mu_1$, $\legendre{q_i}{q_j}=\mu_2$ for all $i < j$, and $\legendre{p_i}{p_j}=\legendre{r_i}{r_j}=1$ for all $i \neq j$.
\end{enumerate}
Then $n=\prod\limits_{i=1}^{t} p_iq_ir_is_i$ is not a congruent number.
\end{theorem}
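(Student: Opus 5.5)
The plan is to show that the $2$-Selmer rank of $E_n$ is zero, using the relation between the $2$-Selmer rank and the Monsky matrix recalled in \S\ref{preli}. For odd square-free $n=\ell_1\cdots\ell_k$ with $k=4t$, that relation is
\[
s_2(n)=2k-\operatorname{rank}_{\F_2} M_n,\qquad M_n=\begin{pmatrix} A+D_2 & D_2\\ D_2 & A+D_{-2}\end{pmatrix}.
\]
The ingredients are as follows. The entries $A_{ij}$ for $i\neq j$ are the additive Legendre symbols $[\ell_j/\ell_i]$, where a symbol value $-1$ corresponds to $1\in\F_2$. The diagonal entries are the row sums $A_{ii}=\sum_{j\ne i}A_{ij}$. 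Finally $D_u=\operatorname{diag}([u/\ell_i])$. So it suffices to prove that $M_n$ is invertible over $\F_2$.

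\textbf{Step 1: compute every entry.} Order the primes in four blocks $p_1,\dots,p_t$, $q_1,\dots,q_t$, $r_1,\dots,r_t$, $s_1,\dots,s_t$. From the residues modulo $8$:
\begin{itemize}
\item $D_2$ has entries $(0,1,1,0)$ on the $p,q,r,s$ blocks;
\item $D_{-2}$ has entries $(0,0,1,1)$ on the $p,q,r,s$ blocks.
\end{itemize}
To fill in $A$, use the hypotheses (a)--(c) together with quadratic reciprocity:
\begin{itemize}
\item Symbols involving $p$ or $r$ (primes $\equiv 1 \pmod 4$) are symmetric.
\item Between $q$'s and $s$'s (both $\equiv 3\pmod 4$) the symbols flip.
\end{itemize}
In particular, $\legendre{q_j}{s_i}=1$ for $i\ne j$ forces $\legendre{s_i}{q_j}=-1$. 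This is why the pair $(s,q)$ is excluded in (b). Similarly, $\legendre{q_i}{s_i}=1$ gives $\legendre{s_i}{q_i}=-1$. Within the $q$-block and the $s$-block, $\mu_1,\mu_2$ give an off-diagonal pattern that is upper-triangular constant and lower-triangular with the opposite value. Thus the $q$-block of $A$ is $\epsilon_2 U+(1+\epsilon_2)L$, where $U,L$ are the strict upper and lower all-ones triangles and $\epsilon_2=[\mu_2]$; the $s$-block is analogous with $\epsilon_1=[\mu_1]$. Then compute each diagonal entry $A_{ii}$ as a row count. This count involves:
\begin{itemize}
\item the intra-quadruple symbols $\legendre{p_i}{q_i}=\legendre{r_i}{s_i}=-1$;
\item the $t-1$ contributions $\legendre{s_i}{q_j}=-1$ from other quadruples;
\item the $\mu$-dependent contributions, which depend on the position $i$ and $t$ (for instance $(i-1)$ versus $(t-i)$ terms).
\end{itemize}

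\textbf{Step 2: reduce to a small block matrix.} Permute rows and columns so that $M_n$ is an $8\times 8$ block matrix whose blocks are $t\times t$ matrices. Each block is a linear combination of $I$, $J$ (all ones), $U$ and $L$, plus diagonal corrections. Now do block Gaussian elimination. Most off-diagonal blocks vanish because symbols across different quadruples are $1$, so the only surviving cross terms are:
\begin{itemize}
\item the $(s,q)$ block, which is $J+I$;
\item the triangular $q$- and $s$-blocks.
\end{itemize}
Eliminate first the $p$- and $r$-related rows and columns. Those blocks should be scalar multiples of $I$, since the relevant symbols are all $1$ apart from the intra-quadruple ones. This reduces the determinant to a smaller block determinant in the $q$ and $s$ variables.

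\textbf{Step 3 (main obstacle).} The remaining determinant involves matrices such as $I+\epsilon U+(1+\epsilon)L+J$ and diagonal matrices with parities depending on $i$ and $t$. The goal is to show that each of these is invertible over $\F_2$ for every choice of $\mu_1,\mu_2$ and every $t$. My first attempt:
\begin{itemize}
\item Note that $U+L=J+I$, so $\epsilon U+(1+\epsilon)L$ equals $L$ or $U$ (up to adding $J+I$).
\item Conjugate by the bidiagonal matrix $I+N$, where $N$ is the shift, to turn triangular all-ones matrices into near-identity matrices. Then verify that the resulting matrices are unitriangular plus a rank-one $J$-correction.
\item Handle the rank-one correction by the matrix determinant lemma over $\F_2$, i.e.\ $\det(X+uv^T)=\det X\,(1+v^TX^{-1}u)$. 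This reduces invertibility to a parity check $1+v^TX^{-1}u\neq 0$. I expect this parity to be independent of $t$, in agreement with the statement, which imposes no parity restriction on $t$.
\end{itemize}
It is wise to sanity-check the base cases $t=1$ (Lagrange's $pq$, $rs$ setting) and $t=2$ directly before trusting the general elimination. Once $\det M_n=1$ in $\F_2$, we get $s_2(n)=0$, hence $r(n)=0$, and $n$ is not congruent.
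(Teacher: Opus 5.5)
Your Steps 1--2 are set up correctly and agree with the paper: the same Monsky matrix up to swapping the two block-columns, the same $D_2$ and $D_{-2}$, and the same reciprocity bookkeeping. There are two small slips, both harmless:
\begin{enumerate}
\item With $A_{ij}=[\ell_j/\ell_i]$, the strict upper triangle of the $q$-block is $1+\epsilon_2$, not $\epsilon_2$.
\item The block of $A$ with rows $q_i$ and columns $s_j$ is the full all-ones matrix $J$, because $\legendre{s_i}{q_i}=-1$ as well. So row $q_i$ picks up $t$, not $t-1$, from the $s$-columns, and the block with rows $s_i$, columns $q_j$ is zero.
\end{enumerate}

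The genuine gap is Step~3. It carries the entire content of the theorem, and you leave it as an expectation. Worse, the matrices you propose to show invertible are singular. Over $\F_2$ one has $J=I+U+L$, so $I+U+J=L$ and $I+L+J=U$ are strictly triangular, hence nilpotent. Likewise, the triangular diagonal blocks that actually occur (the paper's $\mathrm{T}_{l,f}$) have diagonal entries that are the $t$ consecutive integers $f,\dots,f+t-1$. They therefore have a zero diagonal entry mod $2$ once $t\ge 2$. So ``show each of these is invertible'' cannot succeed, and your determinant-lemma route has no invertible $X$ to start from.

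What is missing is one structural observation and one identity. Write a kernel vector of $M_n$ as $(x,y)$ with $x=(x_p,x_q,x_r,x_s)$ and $y=(y_p,y_q,y_r,y_s)$. Let $S=\mathrm{T}_{s,1}$ be the $s$-diagonal block of $A$. The four block-rows of $M_n$ belonging to the $r$- and $s$-primes read $x_s+y_r=0$, $x_r+Sx_s=0$, $x_r+y_s=0$ and $y_r+(S+I)y_s=0$. These involve only the $r,s$ unknowns. Hence $M_n$ is block triangular, $\det M_n=\det M_{rs}\det M_{pq}$, and the coupling $J$ (which sits in the $q$-rows) never enters the determinant.

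The kernel of $M_{rs}$ is cut out by $(S^2+S+I)x_s=0$, together with $y_r=x_s$ and $x_r=y_s=Sx_s$. Let $Q=\mathrm{T}_{q,t+1}$ be the $q$-diagonal block of $A$. The kernel of $M_{pq}$ is cut out by $x_p=x_q$, $y_p=y_q$, $y_q=Qx_q$ and $x_q=(Q+I)y_q$, that is, $(Q^2+Q+I)x_q=0$.

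Both $S$ and $Q$ have the form $\mathrm{T}_{l,0}+cI$, and $\mathrm{T}_{l,0}^2\equiv\mathrm{T}_{l,0}$ by Lemma~\ref{itemize}(h). Therefore, over $\F_2$,
\[
(\mathrm{T}_{l,0}+cI)^2+(\mathrm{T}_{l,0}+cI)+I\equiv\mathrm{T}_{l,0}^2+\mathrm{T}_{l,0}+I\equiv I
\]
for every $t$, $\mu_1$ and $\mu_2$. This idempotence is why no parity condition on $t$ appears.

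The paper reaches the same conclusion by a row swap and two Schur complements. It uses exactly this idempotence together with $\mathrm{T}_{l,0}\mathrm{N}\equiv\mathrm{O}$ (Lemma~\ref{itemize}(g)), and ends at $\left[\begin{smallmatrix}\mathrm{I}&(t+1)\mathrm{N}\\ \mathrm{O}&\mathrm{I}\end{smallmatrix}\right]$.
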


\begin{theorem}\label{2times1357}
    Let $t$ be odd and $p_1,\; p_2,\;\ldots, p_t$; $q_1, q_2, \ldots ,q_t$; $r_1, r_2, \ldots, r_t$; and $s_1, s_2, \ldots, s_t$ be distinct primes such that $(p_i, q_i, r_i, s_i)\equiv (1,5,3,7) \pmod 8$ for $1 \leq i \leq t$. Consider $\mu \in \{1,-1\}$. Assume that
    \begin{enumerate}
        \item[(a)] $\legendre{p_i}{q_i}=\legendre{r_i}{s_i}=-1, \;\;\;\; \legendre{p_i}{s_i}=\legendre{p_i}{r_i}=\legendre{q_i}{r_i}=\legendre{q_i}{s_i}=1$ for all $i$,
        \item[(b)] $\legendre{p_i}{q_j}=\legendre{p_i}{r_j}=\legendre{p_i}{s_j}=\legendre{q_i}{r_j}=\legendre{q_i}{s_j}=\legendre{r_i}{s_j}=1$ for all $i\neq j$,
        \item[(c)] $\legendre{s_i}{s_j}=\legendre{r_i}{r_j}=\mu$ for all $i < j$, $\legendre{p_i}{p_j}=\legendre{q_i}{q_j}=1$ for all $i \neq j$.
        %$\legendre{s_i}{s_j}=\legendre{q_i}{q_j}=\mu$ for all $i < j$, $\legendre{p_i}{p_j}=\legendre{r_i}{r_j}=1$, for all $i \neq j$.
    \end{enumerate}
  Then $n = 2\prod\limits_{i=1}^{t} p_iq_ir_is_i$ is not a congruent number.
\end{theorem}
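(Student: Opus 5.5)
We will show that the $2$-Selmer rank of $E_n$ is zero. By the relation recalled in \S\ref{preli} (following \cite[Appendix]{hb}), it suffices to show that the Monsky matrix $M_e$ attached to the even number $n=2\prod_{i=1}^t p_iq_ir_is_i$ has full rank $8t$ over $\F_2$. Here $M_e$ is the $2k\times 2k$ matrix built from the blocks $A$, $A^{T}$, $D_2$, $D_{-2}$ with $k=4t$, in the form recalled in \S\ref{preli}. Order the $4t$ primes blockwise as $(p_1,\dots,p_t,q_1,\dots,q_t,r_1,\dots,r_t,s_1,\dots,s_t)$.

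\textbf{Step 1: the diagonal blocks.} The congruence classes give the diagonal matrices directly. We have $D_2=\mathrm{diag}(0,I,I,0)$, since $\legendre{2}{\ell}=-1$ exactly for $\ell\equiv 3,5 \pmod 8$. We have $D_{-2}=\mathrm{diag}(0,I,0,I)$, since $\legendre{-2}{\ell}=-1$ exactly for $\ell\equiv 5,7\pmod 8$.

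\textbf{Step 2: the matrix $A$.} Write each of the sixteen $t\times t$ blocks of $A$ explicitly, using hypotheses (a)--(c) together with quadratic reciprocity to get the symbols in the reverse direction.
\begin{itemize}
\item Since $p_i,q_i\equiv 1\pmod 4$, every symbol involving a $p$ or a $q$ is symmetric.
\item For $r,s\equiv 3\pmod 4$ the symbols flip sign. So $\legendre{s_j}{r_i}=-1$ for $i\ne j$, while $\legendre{s_i}{r_i}=+1$.
\item Writing $m$ for the additive version of $\mu$, the $(r,r)$ block has entry $m$ above the diagonal and $m+1$ below. The $(s,s)$ block has the same shape.
\end{itemize}
Consequently every off-diagonal block is of the form $aI+bJ$, where $J$ is the all-ones matrix. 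Each diagonal block is of the form $aI+bJ+cL$, where $L$ is the strictly lower triangular all-ones matrix. The diagonal entries of $A$ are fixed by the row-sum-zero condition; these are computable from the above, and the parity of $t$ enters here.

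\textbf{Step 3: row and column reduction.} Perform block row and column operations on $M_e$ to eliminate the $p$- and $q$-blocks first. This should be easy, because their rows involve only $I$ and $J$, and $\legendre{p_i}{q_i}=-1$ gives an invertible pivot of the form $I+bJ$. Over $\F_2$ we have $J^2=tJ=J$ since $t$ is odd, so $I+J$ is an involution. More generally, $aI+bJ$ is invertible if and only if $a\neq 0$ and $a+tb\neq 0$.

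\textbf{Main obstacle: the $r,s$ part.} What remains is a block matrix in the $r$- and $s$-coordinates whose entries involve $L$ and $L^{T}$. I expect this to be the hardest step. The plan is to exploit the hypothesis $\legendre{r_i}{r_j}=\legendre{s_i}{s_j}=\mu$: the $(r,r)$ and $(s,s)$ blocks then carry the same $mJ+L$ part. Adding the $s$-rows to the $r$-rows should cancel $L$ and leave a block in the algebra $\F_2[I,J]$. After that, a triangular block structure reduces the full-rank question to the invertibility of one or two matrices of the form $aI+bJ$, or of $I+L$, which is unipotent and hence always invertible. We then verify $a\ne 0$ and $a+tb\ne 0$ using $t$ odd.

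If this cancellation does not come out cleanly, the fallback is to argue directly on kernels. Take $x$ with $M_ex=0$. Summing coordinates within each block and using $t$ odd should force all block sums to vanish. Once the block sums vanish, the $J$-terms drop out, and the remaining triangular equations force $x=0$ inductively. Either route gives $\mathrm{rank}\,M_e=8t$. Hence the $2$-Selmer rank is $0$, so $r(n)=0$ and $n$ is non-congruent.
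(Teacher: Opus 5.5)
Your Legendre-symbol bookkeeping is right, and your plan (block elimination, split off the $p,q$ part, then use $\legendre{r_i}{r_j}=\legendre{s_i}{s_j}$ on the $r,s$ part) is the paper's plan. As written, though, the argument has two genuine gaps.

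\textbf{You are working with the wrong matrix.} By \eqref{monskymat}, the lower-right block of $\mathrm{M}_e$ is $\mathrm{D}_{-1}=\mathrm{diag}(\mathrm{O},\mathrm{O},\mathrm{I},\mathrm{I})$, not $\mathrm{D}_{-2}$; the pair $\mathrm{D}_2,\mathrm{D}_{-2}$ belongs to $\mathrm{M}_o$. This is not cosmetic. With $\mathrm{D}_{-2}=\mathrm{diag}(\mathrm{O},\mathrm{I},\mathrm{O},\mathrm{I})$, the $p$-top, $q$-top, $p$-bottom and $q$-bottom block rows vanish outside the left-$p$, left-$q$, right-$p$, right-$q$ block columns. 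There they read $(\mathrm{O},\mathrm{O},\mathrm{I},\mathrm{I})$, $(\mathrm{O},\mathrm{I},\mathrm{I},\mathrm{O})$, $(\mathrm{I},\mathrm{I},\mathrm{O},\mathrm{O})$, $(\mathrm{I},\mathrm{O},\mathrm{O},\mathrm{I})$. These sum to zero, so the matrix you propose to show invertible is singular for every $t$. With $\mathrm{D}_{-1}$ the last row becomes $(\mathrm{I},\mathrm{O},\mathrm{O},\mathrm{O})$. The $p,q$ part then has determinant $\det\left[\begin{smallmatrix}\mathrm{I}&\mathrm{I}\\ \mathrm{I}&\mathrm{O}\end{smallmatrix}\right]^2=1$ and splits off.

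\textbf{The structural claims behind your ``main obstacle'' step are false.}
\begin{itemize}
\item The diagonal entries of $\mathrm{E}$ are row sums that depend on $i$. So the $(r,r)$ and $(s,s)$ blocks are $\mathrm{T}_{r,t+1}\equiv_{2}\mathrm{T}_{r,0}$ and $\mathrm{T}_{s,1}$. These are triangular with diagonal alternating mod $2$, idempotent by Lemma \ref{itemize}(h), and singular for $t\ge 3$. They are not of the form $aI+bJ+cL$ with $I+L$ unipotent, so the ``triangular equations force $x=0$'' fallback has nothing to run on.
\item $I+J$ is not an involution for odd $t$: $(I+J)^2=I+J$ and $(I+J)\mathbf{1}=0$, exactly as your own criterion $a+tb\neq 0$ predicts.
\item Adding $s$-rows to $r$-rows cancels nothing in $\mathrm{M}_e$. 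The blocks $\mathrm{T}_{r,t},\mathrm{T}_{s,1},\mathrm{T}_{r,t}^{\top},\mathrm{T}_{s,1}^{\top}$ sit in four different block rows and block columns.
\end{itemize}

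The missing idea is multiplicative. Take the Schur complement of the $r,s$ part with respect to the identity block that $\mathrm{D}_{-1}$ contributes in the $r,s$ coordinates. It is
\[
\left[\begin{array}{c|c}
\mathrm{T}_{r,t}\mathrm{T}_{r,t}^{\top}+\mathrm{N}^{2} & \mathrm{T}_{r,t}+(\mathrm{N}+\mathrm{I})\mathrm{T}_{s,1}^{\top}\\
\hline
\mathrm{T}_{r,t}^{\top}+\mathrm{T}_{s,1}(\mathrm{N}+\mathrm{I}) & \mathrm{I}+\mathrm{T}_{s,1}\mathrm{T}_{s,1}^{\top}
\end{array}\right].
\]
The identities of Lemma \ref{itemize}, namely $\mathrm{T}_{l,0}\mathrm{T}_{l,0}^{\top}\equiv_{2}\mathrm{O}$, $\mathrm{T}_{l,0}\mathrm{N}\equiv_{2}\mathrm{O}$ and $\mathrm{T}_{l,0}+\mathrm{T}_{l,0}^{\top}\equiv_{2}\mathrm{N}+\mathrm{I}$, together with $\mathrm{T}_{r,0}=\mathrm{T}_{s,0}$, collapse it to $\left[\begin{smallmatrix}\mathrm{O}&t\mathrm{I}\\ t\mathrm{I}&\mathrm{N}+\mathrm{I}\end{smallmatrix}\right]$. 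This is invertible because $t$ is odd.

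This is exactly where the hypothesis on $\mu$ enters. If instead $\mathrm{T}_{r,0}=\mathrm{U}_0$ and $\mathrm{T}_{s,0}=\mathrm{L}_0$, the off-diagonal blocks become $\mathrm{N}$ and the complement is singular for $t\ge 3$.
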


\section{Preliminaries}\label{preli}
In this section, we introduce the $2$-Selmer rank associated with an elliptic curve $E_{n}: y^2=x^3-n^2x$, and we examine the relationship between the $2$-Selmer rank and the rank of the Monsky matrix corresponding to a square-free positive integer $n$. 

Consider the congruent number elliptic curve $E_n$ for $n$ is a positive square-free integer. It is well-established that the subgroup of torsion points of the Mordel-Weil group $E_{n}(\Q)$ is given by
$$E_{n}(\Q)_{\operatorname{tors}}:= \{ \infty, \; (0,0)\; (n,0)\; (-n,0)\} \cong \Z/2\Z \oplus \Z/2\Z.$$  Consequently,  $E_n(\Q) \cong \Z^{r(n)} \oplus \Z/2\Z \oplus \Z/2\Z$. The quantity $r(n)$ is a non-negative integer and it is widely known as Mordell-Weil rank or algebraic rank of $E_n$ over $\Q$. Consider the $2$-Selmer group $\Sel_2(E_n/\Q)$ and the Tate-Shafarevich group $\Sh(E_n/\Q)$ as defined in \cite[Chapter 10]{Silverman} which fits into the following exact sequence: 
\begin{equation*}\label{exactenq}
	0 \to E_n(\Q)/2E_n\Q) \to \Sel_2(E_n/\Q) \to \Sh(E_n/\Q)[2] \to 0. 
\end{equation*} 
Note that $E_n(\Q)/2E_n(\Q) \cong (\Z/2\Z)^{r(n)+2}$. Therefore, $\Sel_2(E_n/\Q)$ is a $2$-group and its cardinality is divisible by $4$, on the account of torsion points of $E_{n}(\Q)$.  If we write the cardinality of the $2$-Selmer group as $\#\Sel_2(E_n/\Q)=2^{s(n)+2}$, then we have the inequality
\begin{equation}\label{rnsninequality}
    0 \leq r(n) \leq s(n).
\end{equation}
The non-negative integer $s(n)$ is referred as $2$-Selmer rank of $E_n$. Selmer conjecture predicts that the Mordell-Weil rank $r(n)$ and the $2$-Selmer rank $s(n)$ have the same parity. By the work of Monsky, we relate $s(n)$ with the rank of the Monsky matrix which is defined below. For more details, see \cite[Appendix]{hb}.

Let $p_1,p_2,\ldots ,p_m$ be distinct odd prime numbers and  $n=2^{\delta}p_1p_2\cdots p_m$ be a square-free positive integer, where $\delta \in \{0, 1\}$. Consider the map $\phi: \{-1,1\} \to \{0,1\}$ defined by $(-1)^\epsilon \mapsto \epsilon$. For each $l \in \{-1,2,-2\}$, we define a $m \times m$ diagonal matrix ${\mathrm{D}}_l=[d_{i,l}]$  and another  $m \times m$ matrix ${\mathrm{E}}=[e_{i,j}]$ respectively as follows: \begin{equation}\label{matrix D and E}
    d_{i,l}=\begin{cases}
    0	\text{ if } \legendre{l}{p_i}=1, \\
    1	\text{ if } \legendre{l}{p_i}=-1, \end{cases} e_{i,j}=\begin{cases}
    0	\text{ if } \legendre{p_j}{p_i}=1 \textit{ with } j\neq i,\\
    1	\text{ if } \legendre{p_j}{p_i}=-1 \textit{ with } j\neq i, \end{cases} \text{ and } e_{i,i}=\sum_{j; j\neq i} e_{i,j}. \end{equation}

The monsky matrices \cite[page 366]{hb} are given by
\begin{equation}\label{monskymat}
    {\mathrm{M}}_o=\begin{bmatrix}
\begin{array}{c c}
{\mathrm{D}}_2 & {\mathrm{E}}+{\mathrm{D}}_2 \\
{\mathrm{E}}+{\mathrm{D}}_{-2} & {\mathrm{D}}_{2}
\end{array}
\end{bmatrix}, \quad {\mathrm{M}}_e=\begin{bmatrix}
\begin{array}{c c}
{\mathrm{D}}_2 & {\mathrm{E}}+{\mathrm{D}}_2 \\
{\mathrm{E}}^{\top}+{\mathrm{D}}_{2} & {\mathrm{D}}_{-1}
\end{array}
\end{bmatrix}
\end{equation}
Here ${\mathrm{E}}^{\top}$ denotes the transpose of the matrix ${\mathrm{E}}$. Then the $2$-Selmer rank is given by 
\begin{align*}
    s(n)= \begin{cases}
        2m-\operatorname{rank}_{\F_2}{(\mathrm{M}}_o), \quad \text{ if } n \text{ is odd }, \\
        2m-\operatorname{rank}_{\F_2}{(\mathrm{M}}_e), \quad \text{ if } n \text{ is even } .
    \end{cases}
\end{align*}
where $\mathbb{F}_2$ is the finite field with two elements. Observe that both ${\mathrm{M}}_o$ and ${\mathrm{M}}_e$ are $2m \times 2m$ matrices. To prove a number $n$ to be a non-congruent number, we must show the Mordell-Weil rank $r(n)=0$. Hence it is enough prove that $s(n)=0$ by \eqref{rnsninequality}. More precisely, it is sufficient to show that the corresponding Monsky matrix ${\mathrm{M}}_o$ (resp. ${\mathrm{M}}_e$) has full rank $2m$ over $\F_2$ that is invertible over $\F_2$ if $n$ is odd (resp. $n$ is even).

\section{Some Preparatory Lemmas}\label{some pre lemmas}
In this section, we fix some notations that will be used throughout the rest of the paper. We then prove some basic relations satisfied by matrices over the finite field $\mathbb{F}_2$, which play a crucial role to prove the main results in \S \ref{sott}. Before that, we recall some properties of block matrices from \cite[Page 467 and 475]{Meyer} which will be used to compute the determinant of Monsky matrices.
\begin{lemma}
\label{determinant}
If $\mathbf{A}$ and $\mathbf{D}$ are square matrices then 
\begin{align*}
\det \left( \left[
\begin{array}{c|c}
\mathbf{A} & \mathbf{B} \\
\hline
\mathbf{O} & \mathbf{D}
\end{array}
\right] \right)= \det( \mathbf{A})\det(\mathbf{D})=\det \left( \left[
\begin{array}{c|c}
\mathbf{A} & \mathbf{O} \\
\hline
\mathbf{C} & \mathbf{D}
\end{array}
\right] \right).
\end{align*}
\end{lemma}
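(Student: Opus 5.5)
The plan is to prove the block-triangular determinant formula by Laplace (cofactor) expansion, inducting on the size of $\mathbf{A}$. Let $\mathbf{A}$ be $k\times k$ and $\mathbf{D}$ be $l\times l$, and write $M=\left[\begin{smallmatrix}\mathbf{A}&\mathbf{B}\\ \mathbf{O}&\mathbf{D}\end{smallmatrix}\right]$.

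For the base case $k=1$, expand $\det M$ along its first column. The only possibly nonzero entry there is $a_{11}$, because everything below it lies in $\mathbf{O}$. Deleting the first row and first column of $M$ leaves exactly $\mathbf{D}$, so $\det M=a_{11}\det\mathbf{D}$.

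For the inductive step, take $k\ge 2$ and again expand along the first column. Its nonzero entries are among $a_{i1}$ with $1\le i\le k$. Deleting row $i$ and column $1$ of $M$ gives a matrix
\[
\left[\begin{array}{cc}\mathbf{A}_{i1}&\mathbf{B}_i\\ \mathbf{O}&\mathbf{D}\end{array}\right],
\]
where $\mathbf{A}_{i1}$ is the $(k-1)\times(k-1)$ minor of $\mathbf{A}$ and $\mathbf{B}_i$ is $\mathbf{B}$ with row $i$ removed. This minor has the same block-upper-triangular shape with the same $\mathbf{D}$. By the induction hypothesis its determinant is $\det\mathbf{A}_{i1}\det\mathbf{D}$. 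Summing the cofactor expansion then gives
\[
\det M=\Big(\sum_{i}(-1)^{i+1}a_{i1}\det\mathbf{A}_{i1}\Big)\det\mathbf{D}=\det\mathbf{A}\det\mathbf{D}.
\]

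For the lower block-triangular form, use $\det N=\det N^{\top}$. The transpose of $\left[\begin{smallmatrix}\mathbf{A}&\mathbf{O}\\ \mathbf{C}&\mathbf{D}\end{smallmatrix}\right]$ is $\left[\begin{smallmatrix}\mathbf{A}^{\top}&\mathbf{C}^{\top}\\ \mathbf{O}&\mathbf{D}^{\top}\end{smallmatrix}\right]$, which is block upper triangular. The first part then gives $\det\mathbf{A}^{\top}\det\mathbf{D}^{\top}=\det\mathbf{A}\det\mathbf{D}$.

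The argument works over any commutative ring, in particular $\F_2$, which is where the paper needs it. The one point that needs care is checking that the minor really keeps the zero block intact. This holds because we only delete one of the first $k$ rows together with the first column, so the bottom $l$ rows still have zeros in all the surviving columns that belong to $\mathbf{A}$.
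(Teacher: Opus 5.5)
Your proof is correct, but it is not the paper's route, because the paper gives no proof of this lemma. It simply recalls the block-triangular formula, together with the Schur complement formula of Lemma~\ref{schur}, from Meyer's textbook. You instead supply a self-contained argument: first-column Laplace expansion with induction on the size of $\mathbf{A}$, then transposition for the block lower-triangular case. The point you flag is exactly what makes the induction work: deleting row $i\le k$ and column $1$ keeps the zero block and the same $\mathbf{D}$. Also, since $i\le k$, the cofactor signs $(-1)^{i+1}$ agree in $M$ and in $\mathbf{A}$, so the sum does reassemble $\det\mathbf{A}$. The citation buys brevity. Your route uses nothing beyond cofactor expansion and $\det N=\det N^{\top}$, so it holds verbatim over any commutative ring. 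That matters here, because the paper only ever applies the lemma over $\mathbb{F}_2$, for instance to get invertibility of block-triangular pieces in the proofs of Theorems~\ref{157}, \ref{533} and \ref{2times1357}. Meyer's book is written with real and complex scalars in mind, so the citation covers the $\mathbb{F}_2$ setting only implicitly, while your proof covers it explicitly.
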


\begin{lemma}
\label{schur}
If $\mathbf{A}$ and $\mathbf{D}$ are square matrices, then 
\begin{align*}
\det \left( \left[
\begin{array}{c|c}
\mathbf{A} & \textbf{B} \\
\hline
\mathbf{C} & \mathbf{D}
\end{array}
\right] \right)= \begin{cases}
   \det( \mathbf{A})\det(\mathbf{D}-\mathbf{C}\mathbf{A}^{-1}\mathbf{B}) \quad \textit{ if } \mathbf{A}^{-1} exists,\\
   \det( \mathbf{D})\det(\mathbf{A}-\mathbf{B}\mathbf{D}^{-1}\mathbf{C}) \quad \textit{ if } \mathbf{D}^{-1} exists.
\end{cases}
\end{align*}
\end{lemma}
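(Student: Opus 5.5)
This is the standard Schur complement identity for block determinants. We prove it by a block LU factorization, then take determinants and use the multiplicativity of $\det$ together with Lemma~\ref{determinant}. The argument is purely algebraic, so it holds over any field, in particular over $\F_2$, which is where we will apply it to the Monsky matrices $\mathrm{M}_o$ and $\mathrm{M}_e$.

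\emph{Case $\mathbf{A}^{-1}$ exists.} We check by direct block multiplication that
\begin{align*}
\left[\begin{array}{c|c} \mathbf{A} & \mathbf{B}\\ \hline \mathbf{C} & \mathbf{D}\end{array}\right]
=\left[\begin{array}{c|c} \mathbf{I} & \mathbf{O}\\ \hline \mathbf{C}\mathbf{A}^{-1} & \mathbf{I}\end{array}\right]
\left[\begin{array}{c|c} \mathbf{A} & \mathbf{B}\\ \hline \mathbf{O} & \mathbf{D}-\mathbf{C}\mathbf{A}^{-1}\mathbf{B}\end{array}\right].
\end{align*}
The bottom-left entry of the product is $\mathbf{C}\mathbf{A}^{-1}\mathbf{A}=\mathbf{C}$. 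The bottom-right entry is $\mathbf{C}\mathbf{A}^{-1}\mathbf{B}+\mathbf{D}-\mathbf{C}\mathbf{A}^{-1}\mathbf{B}=\mathbf{D}$. The top row is clearly $[\mathbf{A}\ \mathbf{B}]$. By Lemma~\ref{determinant}, the first factor is block lower triangular with identity diagonal blocks, so its determinant is $1$. The second factor is block upper triangular, so its determinant is $\det(\mathbf{A})\det(\mathbf{D}-\mathbf{C}\mathbf{A}^{-1}\mathbf{B})$. Multiplicativity of the determinant then gives the first formula.

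\emph{Case $\mathbf{D}^{-1}$ exists.} Here we use the mirror factorization
\begin{align*}
\left[\begin{array}{c|c} \mathbf{A} & \mathbf{B}\\ \hline \mathbf{C} & \mathbf{D}\end{array}\right]
=\left[\begin{array}{c|c} \mathbf{I} & \mathbf{B}\mathbf{D}^{-1}\\ \hline \mathbf{O} & \mathbf{I}\end{array}\right]
\left[\begin{array}{c|c} \mathbf{A}-\mathbf{B}\mathbf{D}^{-1}\mathbf{C} & \mathbf{O}\\ \hline \mathbf{C} & \mathbf{D}\end{array}\right].
\end{align*}
Its verification is the same kind of block multiplication. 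The top-left entry is $\mathbf{A}-\mathbf{B}\mathbf{D}^{-1}\mathbf{C}+\mathbf{B}\mathbf{D}^{-1}\mathbf{C}=\mathbf{A}$, and the top-right entry is $\mathbf{B}\mathbf{D}^{-1}\mathbf{D}=\mathbf{B}$. Applying Lemma~\ref{determinant} to both factors gives $\det(\mathbf{D})\det(\mathbf{A}-\mathbf{B}\mathbf{D}^{-1}\mathbf{C})$.

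There is no real obstacle here. The only points needing care are keeping the order of the noncommuting matrix products correct in the factorizations, and noting that the result does not depend on the characteristic. In characteristic $2$ the minus signs become plus signs, which is how the lemma will be used for the Monsky matrices.
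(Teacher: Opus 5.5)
Your two block factorizations are verified correctly, and combining them with Lemma~\ref{determinant} and multiplicativity of the determinant gives a complete proof over any field, in particular $\F_2$. The paper gives no proof of its own; it cites Meyer's textbook, which derives the identity from essentially this same block LU factorization, so your argument takes the same approach as the source.
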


\noindent Lemma \ref{schur} is referred to as the {\it Schur Complement Lemma}.

\noindent Throughout this section, we use the following notation.
\begin{enumerate}\label{notation}
   %A(i,j) &: (i,j) \mbox{-th entry of  matrix } A,\\
   \item $R_m$ :~~ $m$-th row of a matrix
   \item $C_m$ : ~~$m$-th column of a matrix
 \item   ${\mathrm{O}} :\quad t\times t $    zero matrix,
 %\item   ${\mathrm{O}}_{i \times j} : i\times j $    zero matrix,
 \item  ${\mathrm{I}} :   \quad t\times t$ identity matrix,
  %\mathbf{O}_{k_1 \times k_2}&: \quad  k_1 \times k_2 \mbox{ zero matrix},\\
  %\mathbf{Z}_{\beta}&: \quad  k \times k \mbox{ matrix whose diagonal entries are } \beta \mbox{ and other entries are 1,}\\
  \item ${\mathrm{N}} : \quad  t \times t$  matrix having all the entries $1$,
  
  %\item ${\mathrm{R}} : \quad  t \times t \mbox{ matrix satisfying}  \quad {\mathrm{R}}= {\mathrm{N}}-{\mathrm{I}}$,

   %\item ${\mathrm{N}}_{i \times j} : i \times j$  matrix having all the entries $1$,

 \item  ${\mathrm{U}}_{f} : \quad t \times t$  \ upper-triangular matrix with $i$-th diagonal entries  $(t-i)+f$, and all upper-diagonal entries  1,
  
  \item  ${\mathrm{L}}_{f} : \quad t \times t$  lower-triangular matrix with $i$-th diagonal entries $f+(i-1)$, and all lower-diagonal entries 1.
  \item Let $l_{1}, l_{2}, \ldots, l_{t}$ be $t$ distinct primes with $l_{i} \equiv 3 \pmod{4}$ for each $i = 1, 2, \ldots, t$. Then ${\mathrm{T}}_{l,f}$ denotes a $t \times t$ matrix defined by
   ${\mathrm{T}}_{l,f}=\begin{cases}
    {\mathrm{L}}_{f} \quad \text{ if } \legendre{l_i}{l_j}=-1 \text{ for all } i<j,\\
    {\mathrm{U}}_{f}\quad  \text{ if } \legendre{l_i}{l_j}=1 \text{ for all } i<j.\\
    \end{cases}$

    \item Fix $\alpha_{0}, \beta_{0}  \in \{ 1, 3, 5,7 \}$.  Let $l_{1}, l_{2}, \ldots, l_{t}$ and $k_{1}, k_{2}, \ldots, k_{t}$ be distinct primes such that $l_i \equiv \alpha_{0} \pmod{8}$ and $k_i \equiv \beta_{0} \pmod{8}$ for all $i$. Define ${\mathrm{D}}_{lk}$ to be the $t \times t$ diagonal matrix with diagonal entries given by ${\mathrm{D}}_{lk}(i,i) = \phi\left(\legendre{k_i}{l_i}\right).$

 \item For a matrix $\mathrm{M}$, let $\mathrm{M}(i,j)$ denote its $(i,j)$-th entry. For integers $a, b \in \mathbb{Z}$, we write $a \equiv_{2} b$ to mean $a \equiv b \pmod{2}$. Similarly, for matrices $A$ and $B$ of the same size, the notation $A \equiv_{2} B$ signifies that $A(i,j) \equiv B(i,j) \pmod{2}$ for all $i,j$.

    \end{enumerate}
    
%For any matrix, we denote $m$-th row by $R_m$ and  $m$-th column by $C_m$.
The following lemmas easily follow from the definition. 
%\begin{lemma}\label{tlf}
 %  Let ${\mathrm{T}}_{l,0}$ be given as above. Then we have ${\mathrm{T}}_{l,0}^2 \equiv {\mathrm{T}}_{l,0} \pmod 2$.
%\end{lemma}

%\begin{lemma}\label{RT}
 %   Let $t$ be odd. Then ${\mathrm{N}}_{1\times t} {\mathrm{T}}_{l,0}\equiv {\mathrm{O}}_{1 \times t} \pmod2$ and $ {\mathrm{T}}_{l,0} {\mathrm{N}}_{t\times 1} \equiv {\mathrm{O}}_{t \times 1} \pmod 2$.
%\end{lemma}

\begin{lemma}\label{itemize}
Notations are as above. Then we have the followings: 
\begin{itemize}
    \item[(a)] ${\mathrm{N}}^2 \equiv_{2} t\mathrm{N}$.
    \item[(b)] ${\mathrm{T}}_{l,0}^{\top}+{\mathrm{T}}_{l,0} \equiv_{2} \mathrm{N}+ \mathrm{I}$.

    \item[(c)] ${\mathrm{T}}_{l,0}^{\top}{\mathrm{T}}_{l,0} \equiv_{2} (t-1){\mathrm{N}} $.
    \item[(d)] ${\mathrm{T}}_{l,0}{\mathrm{T}}_{l,0}^{\top} \equiv_{2} \mathrm{O} $.
    \item[(e)]  ${\mathrm{T}}_{l,t}^{\top}{\mathrm{T}}_{l,t} \equiv_{2}  {\mathrm{N}} $. 
    \item[(f)] ${\mathrm{T}}_{l,0}^{\top}{\mathrm{N}} \equiv_{2} (t-1) {\mathrm{N}} $. 
    \item[(g)] ${\mathrm{T}}_{l,0}{\mathrm{N}} \equiv_{2}  {\mathrm{O}} $.
    \item[(h)] ${\mathrm{T}}_{l,0}^2 \equiv_{2} {\mathrm{T}}_{l,0} $.
    \item[(i)] For $t$ be odd, we have ${\mathrm{N}}_{1\times t} {\mathrm{T}}_{l,0}\equiv_{2} {\mathrm{O}}_{1 \times t}$ and $ {\mathrm{T}}_{l,0} {\mathrm{N}}_{t\times 1} \equiv_{2} {\mathrm{O}}_{t \times 1} $.
\end{itemize}
\end{lemma}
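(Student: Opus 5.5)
We will prove every item by computing entries mod $2$ directly from the definitions of $\mathrm{U}_f$ and $\mathrm{L}_f$. We treat only ${\mathrm{T}}_{l,f}={\mathrm{U}}_f$ and deduce the case ${\mathrm{L}}_f$ from it. Let $\mathrm{P}$ be the $t\times t$ reversal permutation matrix, $i\mapsto t+1-i$. Then $\mathrm{P}=\mathrm{P}^{\top}=\mathrm{P}^{-1}$ and $\mathrm{P}\mathrm{N}\mathrm{P}=\mathrm{N}$, $\mathrm{P}\mathrm{I}\mathrm{P}=\mathrm{I}$, $\mathrm{P}\mathrm{O}\mathrm{P}=\mathrm{O}$. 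Conjugating ${\mathrm{U}}_f$ by $\mathrm{P}$ sends the diagonal entry $(t-i)+f$ at position $i$ to position $t+1-i$, where it equals $f+(i'-1)$ with $i'=t+1-i$. It also turns the strictly upper ones into strictly lower ones, so ${\mathrm{L}}_f=\mathrm{P}{\mathrm{U}}_f\mathrm{P}$. Every identity (a)--(h) is a polynomial identity in $\mathrm{T},\mathrm{T}^{\top},\mathrm{N},\mathrm{I}$. It is therefore preserved under $X\mapsto \mathrm{P}X\mathrm{P}$, using $(\mathrm{P}X\mathrm{P})^{\top}=\mathrm{P}X^{\top}\mathrm{P}$. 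For (i), we have $\mathrm{N}_{1\times t}\mathrm{P}=\mathrm{N}_{1\times t}$ and $\mathrm{P}\mathrm{N}_{t\times 1}=\mathrm{N}_{t\times 1}$. So it suffices to treat $\mathrm{T}={\mathrm{U}}_0$ (and ${\mathrm{U}}_t$ for (e)).

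Item (a) is immediate, since every entry of $\mathrm{N}^2$ equals $t$. For the rest we record two counts for $\mathrm{U}_0$. Row $i$ consists of $t-i$ off-diagonal ones together with the diagonal entry $t-i$, so its row sum is $2(t-i)\equiv_2 0$. Column $i$ consists of $i-1$ ones above the diagonal together with $t-i$, so its column sum is $t-1$. From these counts:
\begin{itemize}
\item[(g)] holds because the row sums vanish.
\item[(f)] holds because $(\mathrm{T}^{\top}\mathrm{N})(i,j)$ is the $i$-th column sum, which is $t-1$.
\item[(i)] follows from the same two counts, since $t-1\equiv_2 0$ when $t$ is odd.
\item[(b)] holds because $\mathrm{T}+\mathrm{T}^{\top}$ has ones off the diagonal and $2(t-i)\equiv_2 0$ on the diagonal, which is $\mathrm{N}+\mathrm{I}$ mod $2$.
\end{itemize}

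For (c), (d), (e) we use inner products. The entry $(\mathrm{T}\mathrm{T}^{\top})(i,j)$ is the dot product of rows $i$ and $j$. For $i<j$ the two rows overlap in position $j$, contributing $1\cdot(t-j)$, and in the $t-j$ positions $k>j$. The total is $2(t-j)\equiv_2 0$. On the diagonal we get $(t-i)^2+(t-i)\equiv_2 0$, which gives (d). The entry $(\mathrm{T}^{\top}\mathrm{T})(i,j)$ is the dot product of columns $i$ and $j$. For $i<j$ this is $(i-1)+(t-i)=t-1$, and on the diagonal it is $(i-1)+(t-i)^2\equiv_2 t-1$, which gives (c). For (e), with ${\mathrm{U}}_t$ the diagonal entry is $2t-i\equiv_2 i$. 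The same column computation then gives $(i-1)+i\equiv_2 1$ both off and on the diagonal, hence $\mathrm{N}$.

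The main bookkeeping step is (h). We compute $(\mathrm{T}^2)(i,j)=\sum_k \mathrm{T}(i,k)\mathrm{T}(k,j)$, which is nonzero only for $i\le k\le j$. For $i<j$ the sum is $(t-i)\cdot 1+(j-i-1)+1\cdot(t-j)$, coming from $k=i$, from $i<k<j$, and from $k=j$ respectively. This totals $2t-2i-1\equiv_2 1=\mathrm{T}(i,j)$. For $i=j$ the sum is $(t-i)^2\equiv_2 t-i=\mathrm{T}(i,i)$. For $i>j$ both sides vanish. This proves (h) for $\mathrm{U}_0$, and hence, by the conjugation argument, for $\mathrm{T}_{l,0}$ in both cases.
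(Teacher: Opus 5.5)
Your proof is correct and follows essentially the same route as the paper: direct mod-$2$ entry computations for $\mathrm{U}_0$ via row sums, column sums, and row/column inner products, including the same three-part split of the sum for (h). The differences are minor. Your conjugation ${\mathrm{L}}_f=\mathrm{P}{\mathrm{U}}_f\mathrm{P}$ by the reversal permutation actually justifies the paper's remark that the ${\mathrm{L}}_0$ case ``follows similarly,'' and you check (e) by a direct column computation for ${\mathrm{U}}_t$ rather than deducing it from (b) and (c) via ${\mathrm{T}}_{l,t}={\mathrm{T}}_{l,0}+t\mathrm{I}$ as the paper does.
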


\begin{proof}
    We prove each point separately. We will give a proof for the case ${\mathrm{T}}_{l,0}=\mathrm{U}_0$ only since the case ${\mathrm{T}}_{l,0}=\mathrm{L}_0$ follows similarly.
    \begin{itemize}
        \item[(a)] If we denote the entries of $\mathrm{N}$ by $\mathrm{N}(i,j)$, then $\mathrm{N}(i,j)=1$. By the definition of multiplication of matrices, the entries $\mathrm{N}^2(i,j)= \sum_{k=1}^t \mathrm{N}(i,k) \mathrm{N}(k, j)= t$. Hence we have the desired result.

        \item[(b)] Let ${\mathrm{T}}_{l,0}=\mathrm{U}_0$ where ${\mathrm{U}}_0$ is defined as above. Then ${\mathrm{U}}^{\top}$ is a lower triangular matrix with the diagonal entries to be $t-i$ and all the lower-diagonal entries are 1. Since the diagonal entries of a matrix and its transpose remain same, modulo 2, the diagonal entries of ${\mathrm{T}}_{l,0}^{\top}+{\mathrm{T}}_{l,0}$ is $0$, and all the non-diagonal entries are {1}. Hence ${\mathrm{T}}_{l,0}^{\top}+{\mathrm{T}}_{l,0} \equiv_{2} {\mathrm{N}} + {\mathrm{I}} $. %In a similar way, we can proof the same when ${\mathrm{T}}_{l,0}=\mathrm{L}_0$.
        
        \item[(c)] Let ${\mathrm{T}}_{l,0}=\mathrm{U}_0$. Then ${\mathrm{T}}_{l,0}^{\top}{\mathrm{T}}_{l,0}=\mathrm{U}_0^{\top}\mathrm{U}_0$ and {\scriptsize{\begin{align*}
        \mathrm{U}_0^{\top}\mathrm{U}_0(i,j) &= 
        \begin{cases}
            (j-1)+(t-j) \equiv_{2} t-1  & \text{if } i>j,\\
            (i-1)+(t-i)^2 \equiv_{2} t-1 & \text{if } i=j,\\
            (i-1)+(t-i) \equiv_{2} t-1 & \text{if }  i<j  .
        \end{cases} %\\
       % & \equiv t-1 \pmod 2.
    \end{align*}}}
%The case where ${\mathrm{T}}_{l,0}=\mathrm{L}_0$ can be proved similarly.
        \item[(d)] %We only prove the case where ${\mathrm{T}}_{l,0}=\mathrm{U}_0$ since the case ${\mathrm{T}}_{l,0}=\mathrm{L}_0$ follows similarly.
        Assume that ${\mathrm{T}}_{l,0}=\mathrm{U}_0$. %The proof is pretty straight forward since
        Hence {\scriptsize{$\mathrm{U}_0^{\top}\mathrm{U}_0(i,j)= \begin{cases}
            (t-i)+ \underbrace{1+1+\cdots+1}_{t-i \text{ times}} \equiv_{2} 0 \ & \text{if } i>j,\\
            (t-i)^2+ \underbrace{1+1+\cdots+1}_{t-i \text{ times}} \equiv_{2} 0 & \text{if } i=j,\\
            (t-j)+ \underbrace{1+1+\cdots+1}_{t-j \text{ times}} \equiv_{2} 0  & \text{if }  i<j,  
        \end{cases}
        $}}\\
that is, for $1 \leq i,j \leq t$, $\mathrm{U}_0^{\top}\mathrm{U}_0(i,j)\equiv_{2} 0 $. Hence we have the desired equality.

        \item[(e)] Note that ${\mathrm{T}}_{l,t}={\mathrm{T}}_{l,0}+t\mathrm{I}$ and ${\mathrm{T}}_{l,t}^{\top}={\mathrm{T}}_{l,0}^\top+t\mathrm{I}$. Hence 
        \begin{eqnarray*}
        {\mathrm{T}}_{l,t}^{\top}{\mathrm{T}}_{l,t} &=& ({\mathrm{T}}_{l,0}^{\top} + t\mathrm{I})({\mathrm{T}}_{l,0} + t\mathrm{I})  \\ 
        & \equiv_2 & {\mathrm{T}}_{l,0}^{\top}{\mathrm{T}}_{l,0} + t ({\mathrm{T}}_{l,0}^{\top}+{\mathrm{T}}_{l,0}) + t\mathrm{I}  \\
        & \overset{\text{by} (b), (c)}{\equiv_2} &  (t-1)\mathrm{N} + t(\mathrm{N}+\mathrm{I}) + t\mathrm{I}  \\
        & \equiv_2 & \mathrm{N} .
        \end{eqnarray*}
        
        \item[(f)] Let ${\mathrm{T}}_{l,0} = \mathrm{U}_0$. Then by the definition of $\mathrm{U}_0^{\top}$ as in (b), $\mathrm{U}_0^{\top}\mathrm{N}(i,j)$ equal to sum of entries of $i$-th row which is equal to $(i-1)+(t-i)=t-1$ which gives that $\mathrm{U}_0^{\top}\mathrm{N}\equiv_{2}(t-1)\mathrm{N}$. %In a similar way, we can prove that the same equality if ${\mathrm{T}}_{l,0} = \mathrm{L}_0$.

        \item[(g)] Assume that ${\mathrm{T}}_{l,0} = \mathrm{U}_0$. Clearly the $(i,j)$-th entry of $\mathrm{U}_0\mathrm{N}$ is the sum of the entries of the $i$-th row which is equal to $(t-i)+ \underbrace{1+1+\cdots+1}_{t-i \text{ times}} \equiv_{2} 0 $. Hence is the result.

        \item[(h)] We first assume that the matrix ${\mathrm{T}}_{l,0}$ is equal to $\mathrm{U}_0$. Then by definition 
       {\scriptsize{ \begin{align*}{\mathrm{U}}_{0}^2(i,j)=  \sum_{k=1}^t{\mathrm{U}}_{0}(i,k){\mathrm{U}}_{0}(k,j)= \begin{cases}
                0 & \text{if } i>j, \\
                (t-i)^2 \equiv_{2} t-i & \text{if }  i=j, \\
                (t-i)+(t-j)+(j-i-1) \equiv_{2} 1  & \text{if } i<j.
             \end{cases}
        \end{align*} 
        }}
    that is ${\mathrm{U}}_{0}^2(i,j)\equiv_{2} {\mathrm{U}}_{0}(i,j) $, hence we have the desired result. %The case where ${\mathrm{T}}_{l,0}=\mathrm{L}_0$ can be proved similarly.

    \item[(i)] Let $t$ be odd and ${\mathrm{T}}_{l,0}=\mathrm{U}_0$. Then ${\mathrm{N}}_{1\times t} {\mathrm{U}}_{0}$ is a $1 \times t$ row matrix and ${\mathrm{N}}_{1\times t} {\mathrm{U}}_{0}(1,j)= \text{sum of the entries of the $j$-th columns of } \mathrm{U}_{0}= (t-j)+ \underbrace{1+1+\cdots+1}_{j-1 \text{ times}} = t-1 \equiv_{2} 0 $. Similarly, ${\mathrm{U}}_{0} {\mathrm{N}}_{t\times 1}$ is a $t\times 1$ column matrix and ${\mathrm{U}}_{0} {\mathrm{N}}_{t\times 1}(i,1)=$ sum of the entries of the $i$-th row of  $\mathrm{U}_{0} \equiv_{2} 0 $ (since $t$ is odd). 
    \end{itemize}
\end{proof}

\begin{lemma}\label{UL-LU}
Let $\mathrm{U}_0$ and $\mathrm{L}_0$ be upper and lower triangular matrices of size $t \times t$, respectively, as in the notation. Then:
\begin{itemize}
    \item[(a)] The matrix $\mathrm{L}_0^{\top}\mathrm{U}_0$ is upper triangular, and
    \begin{align*}
        \mathrm{L}_0^{\top}\mathrm{U}_0(i,j) &=
        \begin{cases}
            0 & \text{if } i>j,\\
            (t-i)(i-1) & \text{if } i=j,\\
            t-2 & \text{if } i<j.
        \end{cases}
    \end{align*}

    \item[(b)] The matrix $\mathrm{U}_0^{\top}\mathrm{L}_0$ is lower triangular, and
    \begin{align*}
        \mathrm{U}_0^{\top}\mathrm{L}_0(i,j) &=
        \begin{cases}
            0 & \text{if } i<j,\\
            (t-i)(i-1) & \text{if } i=j,\\
            t-2 & \text{if } i>j.
        \end{cases}
    \end{align*}
    \item[(c)] $\mathrm{U}_{0}^{\top}+\mathrm{L}_{0} \equiv_{2} \mathrm{L}_{0}^{\top}+\mathrm{U}_{0} \equiv_{2} (t+1) \mathrm{I} $.
\end{itemize}
\end{lemma}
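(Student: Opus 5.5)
Everything here is a direct entrywise computation from the definitions of $\mathrm{U}_0$ and $\mathrm{L}_0$, so I would write down explicit entry formulas and sum. By definition $\mathrm{U}_0(i,k)=t-i$ if $k=i$, $1$ if $k>i$, and $0$ if $k<i$. Likewise $\mathrm{L}_0(i,k)=i-1$ if $k=i$, $1$ if $k<i$, and $0$ if $k>i$. Hence $\mathrm{L}_0^{\top}(i,k)=\mathrm{L}_0(k,i)$ equals $i-1$ if $k=i$, equals $1$ if $k>i$, and vanishes if $k<i$. So $\mathrm{L}_0^{\top}$ is upper triangular, and (a) reduces to a product of two upper triangular matrices. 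This already gives the vanishing for $i>j$.

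For the nonzero entries of (a), I compute $\mathrm{L}_0^{\top}\mathrm{U}_0(i,j)=\sum_{k=i}^{j}\mathrm{L}_0^{\top}(i,k)\mathrm{U}_0(k,j)$.
\begin{itemize}
\item[(i)] For $i=j$ there is only the single term $(i-1)(t-i)$.
\item[(ii)] For $i<j$ I split the sum into three pieces. The term $k=i$ contributes $(i-1)\cdot 1$. The terms $i<k<j$ contribute $1\cdot 1$ each, giving $j-i-1$. The term $k=j$ contributes $1\cdot(t-j)$.
\end{itemize}
The total in (ii) is $(i-1)+(j-i-1)+(t-j)=t-2$, as claimed.

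Part (b) follows by transposition: $\mathrm{U}_0^{\top}\mathrm{L}_0=(\mathrm{L}_0^{\top}\mathrm{U}_0)^{\top}$. Hence it is lower triangular, with the same diagonal and the value $t-2$ below the diagonal.

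For (c), note first that $\mathrm{U}_0^{\top}$ and $\mathrm{L}_0$ are both lower triangular with all strictly-lower entries equal to $1$. These off-diagonal entries therefore cancel modulo $2$. The $i$-th diagonal entry of the sum is $(t-i)+(i-1)=t-1\equiv_2 t+1$, so $\mathrm{U}_0^{\top}+\mathrm{L}_0\equiv_2(t+1)\mathrm{I}$. The other expression is the transpose, $\mathrm{L}_0^{\top}+\mathrm{U}_0=(\mathrm{U}_0^{\top}+\mathrm{L}_0)^{\top}$, and $(t+1)\mathrm{I}$ is symmetric, so it gives the same result.

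There is no real obstacle in this argument. The only point needing care is the index bookkeeping in the $i<j$ sum, in particular the boundary terms $k=i$ and $k=j$. One should also note the edge case $j=i+1$, where the middle range $i<k<j$ is empty; the formula $j-i-1=0$ still holds, so the total is unaffected.
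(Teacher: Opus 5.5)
Your proof is correct and follows essentially the same route as the paper: explicit entrywise formulas for $\mathrm{L}_0^{\top}$ and $\mathrm{U}_0$, the same three-part split of the sum giving $(i-1)+(j-i-1)+(t-j)=t-2$, and the same diagonal/off-diagonal cancellation for (c). The one small difference is that you derive (b) and the second congruence in (c) by transposition, which is slightly cleaner than the paper's ``similar argument'' remark.
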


\begin{proof}
    We prove each part separately.
    \begin{itemize}
    \item[(a)] Note that {\scriptsize{$\mathrm{L}_0^{\top} =
        \begin{cases}
            0 & \text{if } i>j,\\
            (i-1) & \text{if } i=j,\\
            1 & \text{if } i<j.
        \end{cases} $}} and {\scriptsize{$\mathrm{U}_0 = 
        \begin{cases}
            0 & \text{if } i>j,\\
            (t-i) & \text{if } i=j,\\
            1 & \text{if } i<j,
        \end{cases} $}} that is, both $\mathrm{L}_0^{\top}$ and $\mathrm{U}_0$ are upper triangular matrices and hence is their product. Furthermore, {\scriptsize{\begin{align*}
        \mathrm{L}_0^{\top}\mathrm{U}_0(i,j) &= 
        \begin{cases}
            0 & \text{if } i>j,\\
            (i-1)(t-i) & \text{if } i=j,\\
            (i-1)+(t-j)+\underbrace{1+1+\cdots+1}_{(j-i-1) \text{ times}}=t-2 & \text{if } i<j.
        \end{cases}
    \end{align*}
        }}
        \item[(b)] Using similar argument as in the previous case, we get the desired value of $\mathrm{U}_0^\top\mathrm{L}_0(i,j)$.

        \item[(c)]This follows from the definition. Indeed, both $\mathrm{U}_0^{\top}$ and $\mathrm{L}_0$ are lower triangular matrices with diagonal entries to be $t-i$ and $i-1$ respectively, and lower diagonal entries to be $1$ (for both matrices). This implies that the diagonal entries of $\mathrm{U}_0^{\top}+\mathrm{L}_0$ is $t-1 \equiv_{2} t+1$ and the non-diagonal entries are $0$. Hence we have the desired result. Similarly we have $\mathrm{L}_0^{\top}+\mathrm{U}_0 \equiv_{2} (t+1)\mathrm{I} $. 
    \end{itemize}
\end{proof}

\section{Proof of the theorems}\label{proof of the theorems}
In this section, we prove the theorems stated in \S \ref{sott}. In each case, we show that the $2$-Selmer rank of the elliptic curve $E_n$ is $0$ by showing that the corresponding Monsky matrix is invertible over $\F_2$. Our computations rely on elementary row and column operations on the matrix, noting that the rank remains invariant under such operations by basic linear algebra.   %For a simpler approach to the proofs, an integer of the form $n=\prod\limits_{i=1}^{t}p_i q_i r_i\quad \text{(or } n=2 \prod\limits_{i=1}^{t}p_i q_i r_i\text{)}$ may be viewed as  $n=\prod\limits_{i=1}^{t} p_i\prod\limits_{i=1}^{t} q_i\prod\limits_{i=1}^{t} r_i\quad  \text{(or } n=2 \prod\limits_{i=1}^{t} p_i\prod\limits_{i=1}^{t} q_i\prod\limits_{i=1}^{t} r_i\text{)},$ as in Theorems~\ref{157}, \ref{355} (or Theorem~\ref{533}).
\begin{proof}[Proof of Theorem \ref{157}]
Let $ n=\prod_{i=1}^{t} p_i q_i r_i $ be as in Theorem~\ref{157}. For simpler approach in the proof, we regard it as
$ n=\left(\prod_{i=1}^{t} p_i\right)\left(\prod_{i=1}^{t} q_i\right)\left(\prod_{i=1}^{t} r_i\right) $.
Since $q_i \equiv 5 \pmod{8}$, it follows that $\legendre{q_i}{r_i} = \legendre{r_i}{q_i}$ for all $1 \le i \le t$, and hence ${\mathrm{D}}_{qr} = {\mathrm{D}}_{rq}$.   By (\ref{matrix D and E}), We have 
{\scriptsize{
\begin{equation*}
 \label{D157} 
{\mathrm{D}}_2=\left[
\begin{array}{c|c|c}
{\mathrm{O}} & {\mathrm{O}} & {\mathrm{O}} \\
\hline
{\mathrm{O}} & {\mathrm{I}} & {\mathrm{O}} \\
\hline
{\mathrm{O}} & {\mathrm{O}} & {\mathrm{O}}
\end{array}
\right],\quad {\mathrm{D}}_{-2}=\left[
\begin{array}{c|c|c}
{\mathrm{O}} & {\mathrm{O}} & {\mathrm{O}} \\
\hline
{\mathrm{O}} & {\mathrm{I}} & {\mathrm{O}} \\
\hline
{\mathrm{O}} & {\mathrm{O}} & {\mathrm{I}}
\end{array}
\right], \quad {\mathrm{E}} \equiv_{2}
\left[
\begin{array}{c|c|c}
{\mathrm{O}} & {\mathrm{I}} & {\mathrm{I}} \\
\hline
{\mathrm{I}} & {\mathrm{I}}+{\mathrm{D}}_{qr} & {\mathrm{D}}_{qr}\\
\hline
{\mathrm{I}} & {\mathrm{D}}_{qr} & {\mathrm{I}} + {\mathrm{D}}_{qr} + {\mathrm{T}}_{r,0}
\end{array}
\right].     
\end{equation*}
}}
Using the definition of Monsky matrix as in \eqref{monskymat}, we have 
{\scriptsize{\begin{equation*}
 {\mathrm{M}}_o \equiv_{2}\left[
\begin{array}{c|c|c|c|c|c}
{\mathrm{O}} & {\mathrm{O}} & {\mathrm{O}} & {\mathrm{O}} & {\mathrm{I}} & {\mathrm{I}}\\
\hline
{\mathrm{O}} & {\mathrm{I}} & {\mathrm{O}} & {\mathrm{I}} & {\mathrm{D}}_{qr} & {\mathrm{D}}_{qr} \\
\hline
{\mathrm{O}} & {\mathrm{O}} & {\mathrm{O}} & {\mathrm{I}} & {\mathrm{D}}_{qr} & {\mathrm{I}}+{\mathrm{D}}_{qr}+{\mathrm{T}}_{r,0}\\
\hline
{\mathrm{O}} & {\mathrm{I}} & {\mathrm{I}} & {\mathrm{O}} & {\mathrm{O}} & {\mathrm{O}}\\
\hline
{\mathrm{I}} & {\mathrm{D}}_{qr} & {\mathrm{D}}_{qr} & {\mathrm{O}} & {\mathrm{I}} & {\mathrm{O}}\\
\hline
{\mathrm{I}} & {\mathrm{D}}_{qr} & {\mathrm{D}}_{qr}+ {\mathrm{T}}_{r,0} & {\mathrm{O}} & {\mathrm{O}} & {\mathrm{O}}\\
\end{array}
\right]. 
\end{equation*}
}}
Next, for each $i \in \{1,2,\ldots,t\}$, we interchange the rows $R_i$ and $R_{5t+i}$ of $\mathrm{M}_{o}$. In short, we interchange the rows in the first block with those in the sixth block of $\mathrm{M}_{o}$, and obtain
{ \scriptsize{
\begin{equation*}
 {\mathrm{M}}^{\prime}_o =
 \left[
\begin{array}{c|c|c|c|c|c}
{\mathrm{I}} & {\mathrm{D}}_{qr} & {\mathrm{D}}_{qr}+ {\mathrm{T}}_{r,0} & {\mathrm{O}} & {\mathrm{O}} & {\mathrm{O}}\\
\hline
{\mathrm{O}} & {\mathrm{I}} & {\mathrm{O}} & {\mathrm{I}} & {\mathrm{D}}_{qr} & {\mathrm{D}}_{qr} \\
\hline
{\mathrm{O}} & {\mathrm{O}} & {\mathrm{O}} & {\mathrm{I}} & {\mathrm{D}}_{qr} & {\mathrm{I}}+{\mathrm{D}}_{qr}+{\mathrm{T}}_{r,0}\\
\hline
{\mathrm{O}} & {\mathrm{I}} & {\mathrm{I}} & {\mathrm{O}} & {\mathrm{O}} & {\mathrm{O}}\\
\hline
{\mathrm{I}} & {\mathrm{D}}_{qr} & {\mathrm{D}}_{qr} & {\mathrm{O}} & {\mathrm{I}} & {\mathrm{O}}\\
\hline
{\mathrm{O}} & {\mathrm{O}} & {\mathrm{O}} & {\mathrm{O}} & {\mathrm{I}} & {\mathrm{I}}\\
\end{array}
\right]. 
\end{equation*}
}}
Now let
{\scriptsize{
\begin{equation*}
\label{ABCD157} 
{\mathrm{A}}=\left[
\begin{array}{c|c|c|c}
{\mathrm{I}} & {\mathrm{D}}_{qr} &  {\mathrm{D}}_{qr}+{\mathrm{T}}_{r,0} & {\mathrm{O}}\\
\hline
{\mathrm{O}} & {\mathrm{I}} & {\mathrm{O}} & {\mathrm{I}} \\
\hline
{\mathrm{O}} & {\mathrm{O}} & {\mathrm{O}} & {\mathrm{I}} \\
\hline
{\mathrm{O}} & {\mathrm{I}} & {\mathrm{I}} & {\mathrm{O}}
\end{array}
\right],\quad {\mathrm{B}}=\left[
\begin{array}{c|c}
{\mathrm{O}} & {\mathrm{O}} \\
\hline
{\mathrm{D}}_{qr} & {\mathrm{D}}_{qr} \\
\hline
{\mathrm{D}}_{qr} & {\mathrm{I}}+{\mathrm{D}}_{qr}+ {\mathrm{T}}_{r,0} \\
\hline 
{\mathrm{O}} & {\mathrm{O}}
\end{array}
\right], 
{\mathrm{C}}= \left[
\begin{array}{c|c|c|c}
{\mathrm{I}} & {\mathrm{D}}_{qr} & {\mathrm{D}}_{qr} & {\mathrm{O}}\\
\hline
{\mathrm{O}} & {\mathrm{O}} & {\mathrm{O}} & {\mathrm{O}}
\end{array}
\right], \quad \text{ and, } {\mathrm{D}}= \left[\begin{array}{c|c}
{\mathrm{I}} & {\mathrm{O}}\\
\hline
{\mathrm{I}} & {\mathrm{I}}
\end{array}\right].
\end{equation*}
}}
 Note that Lemma \ref{determinant} implies that the matrix ${\mathrm{D}}$ is invertible. Applying Lemma \ref{schur}, we deduce that ${\mathrm{M}}^{\prime}_o$ is invertible over $\mathbb{F}_2$ if and only if the matrix
 {\scriptsize{
\begin{equation*}
\mathrm{M}^{\prime \prime }_{o}={\mathrm{A}}-{\mathrm{B}}{\mathrm{D}}^{-1}{\mathrm{C}} = \left[
\begin{array}{c|c|c|c}
{\mathrm{I}} & {\mathrm{D}}_{qr} &  {\mathrm{D}}_{qr}+{\mathrm{T}}_{r,0} & {\mathrm{O}}\\
\hline
{\mathrm{O}} & {\mathrm{I}} & {\mathrm{O}} & {\mathrm{I}} \\
\hline
{\mathrm{I}}+{\mathrm{T}}_{r,0} & {\mathrm{D}}_{qr}+{\mathrm{T}}_{r,0}{\mathrm{D}}_{qr} & {\mathrm{D}}_{qr}+{\mathrm{T}}_{r,0}{\mathrm{D}}_{qr} & {\mathrm{I}} \\

\hline
{\mathrm{O}} & {\mathrm{I}} & {\mathrm{I}} & {\mathrm{O}}
\end{array}\right]  
\end{equation*}
}}
is invertible over $\mathbb{F}_2$. For each $1 \leq i \leq t$, replace the column $C_{2t+i}$ by $C_{t+i} + C_{2t+i}$ in $\mathrm{M}^{\prime \prime}_{o}$; equivalently, replace the third block of columns by the sum of the second and third block columns of $\mathrm{M}^{\prime \prime }_{o}$, we obtain 
{\scriptsize{
\begin{equation*}
\mathrm{M}^{\prime \prime} = \left[
\begin{array}{c|c|c|c}
{\mathrm{I}} & {\mathrm{D}}_{qr} & {\mathrm{T}}_{r,0} & {\mathrm{O}}\\
\hline
{\mathrm{O}} & {\mathrm{I}} & {\mathrm{I}} & {\mathrm{I}} \\
\hline
{\mathrm{I}}+{\mathrm{T}}_{r,0} & {\mathrm{D}}_{qr}+{\mathrm{T}}_{r,0}{\mathrm{D}}_{qr} & {\mathrm{O}} & {\mathrm{I}} \\
\hline
{\mathrm{O}} & {\mathrm{I}} & {\mathrm{O}} & {\mathrm{O}}
\end{array}\right]. 
\end{equation*}
}}
 By Lemma \ref{determinant}, we see that {\scriptsize{$\left[\begin{array}{c|c}
\mathrm{I} & \mathrm{D}_{qr} \\
\hline
\mathrm{O} & \mathrm{I}
\end{array}\right]$}} is invertible over $\mathbb{F}_2$, and 
{\scriptsize{$\left[\begin{array}{c|c}
\mathrm{I} & \mathrm{D}_{qr} \\
\hline
\mathrm{O} & \mathrm{I}
\end{array}\right]^{-1}
\equiv_{2}
\left[\begin{array}{c|c}
\mathrm{I} & \mathrm{D}_{qr} \\
\hline
\mathrm{O} & \mathrm{I}
\end{array}\right]$}}. Applying Lemma \ref{schur} to $\mathrm{M}^{\prime\prime}$,
we deduce that
{\scriptsize{
\begin{equation*}
\det \mathrm{M}^{\prime \prime} \equiv_2  \det \left[\begin{array}{c|c}
  {\mathrm{T}}_{r,0}+{\mathrm{T}}_{r,0}^2   &  {\mathrm{I}}\\
  \hline {\mathrm{I}} & {\mathrm{I}}
\end{array}\right].  
\end{equation*}
}}
Using part (h) of Lemma \ref{itemize}, we obtain ${\mathrm{T}}_{r,0} + {\mathrm{T}}_{r,0}^2 \equiv_{2} \mathrm{O}$. Therefore, by applying Lemma \ref{schur} to $\mathrm{M}^{\prime \prime}$, we have $\det \mathrm{M}^{\prime \prime} \equiv_{2} \det \mathrm{I}$. Hence $\mathrm{M}'_{o}$ is invertible, and since $\operatorname{rank}_{\F_2}{(\mathrm{M}_{o})} = \operatorname{rank}_{\F_2}{(\mathrm{M}'_{o})}$, it follows that $\mathrm{M}_{o}$ has full rank. This completes 
the proof.
\end{proof}

\begin{proof}[Proof of Theorem \ref{355}]
    Let $(p_i, q_i, r_i) \equiv (3,5,5) \pmod{8}$ be distinct primes as in Theorem \ref{355}, and set $n = \prod_{i=1}^{t} p_i q_i r_i$. For simpler approach in the proof, we regard it as
$ n=\left(\prod_{i=1}^{t} p_i\right)\left(\prod_{i=1}^{t} q_i\right)\left(\prod_{i=1}^{t} r_i\right) $. Note that,  we have $D_{qr}=D_{rq}$. Here Since
\begin{equation*}
\legendre{-1}{p} =
\begin{cases}
1 & \text{if } p \equiv 1 \pmod{4}, \\
-1 & \text{if } p \equiv 3 \pmod{4},
\end{cases}   \text{ and } \legendre{2}{p} = -1 \text{ whenever } p \equiv  \pm  3  \pmod{8},
\end{equation*}
it follows that 
{\scriptsize{
\begin{equation*}
 \label{D355} 
{\mathrm{D}}_2=\left[
\begin{array}{c|c|c}
{\mathrm{I}} & {\mathrm{O}} & {\mathrm{O}} \\
\hline
{\mathrm{O}} & {\mathrm{I}} & {\mathrm{O}} \\
\hline
{\mathrm{O}} & {\mathrm{O}} & {\mathrm{I}}
\end{array}
\right],\quad {\mathrm{D}}_{-2}=\left[
\begin{array}{c|c|c}
{\mathrm{O}} & {\mathrm{O}} & {\mathrm{O}} \\
\hline
{\mathrm{O}} & {\mathrm{I}} & {\mathrm{O}} \\
\hline
{\mathrm{O}} & {\mathrm{O}} & {\mathrm{I}}
\end{array}
\right], \quad {\mathrm{E}} \equiv_{2} \left[
\begin{array}{c|c|c}
{\mathrm{T}}_{p,0} & {\mathrm{I}} & {\mathrm{I}} \\
\hline
{\mathrm{I}} & {\mathrm{I}}+{\mathrm{D}}_{qr} & {\mathrm{D}}_{qr}\\
\hline
{\mathrm{I}} & {\mathrm{D}}_{qr} & {\mathrm{I}} + {\mathrm{D}}_{qr}
\end{array}
\right]     
\end{equation*}
}}
as in \eqref{matrix D and E}. Using the definition of Monsky matrix as in (\ref{monskymat}), we have 
{\scriptsize{
\begin{equation*}
 {\mathrm{M}}_o \equiv_{2}\left[
\begin{array}{c|c|c|c|c|c}
{\mathrm{I}} & {\mathrm{O}} & {\mathrm{O}} & {\mathrm{I}}+{\mathrm{T}}_{p,0} & {\mathrm{I}} & {\mathrm{I}}\\
\hline
{\mathrm{O}} & {\mathrm{I}} & {\mathrm{O}} & {\mathrm{I}} & {\mathrm{D}}_{qr} & {\mathrm{D}}_{qr} \\
\hline
{\mathrm{O}} & {\mathrm{O}} & {\mathrm{I}} & {\mathrm{I}} & {\mathrm{D}}_{qr} & {\mathrm{D}}_{qr}\\
\hline
{\mathrm{T}}_{p,0} & {\mathrm{I}} & {\mathrm{I}} & {\mathrm{I}} & {\mathrm{O}} & {\mathrm{O}}\\
\hline
{\mathrm{I}} & {\mathrm{D}}_{qr} & {\mathrm{D}}_{qr} & {\mathrm{O}} & {\mathrm{I}} & {\mathrm{O}}\\
\hline
{\mathrm{I}} & {\mathrm{D}}_{qr} & {\mathrm{D}}_{qr} & {\mathrm{O}} & {\mathrm{O}} & {\mathrm{I}}\\
\end{array}
\right]. 
\end{equation*}
}}

Applying Lemma \ref{schur} and part (h) of Lemma \ref{itemize} to ${\mathrm{M}}_{o}$, we conclude that $\mathrm{M}_{o}$ is invertible over $\mathbb{F}_2$ if and only if
 
{\scriptsize{ 
 \begin{equation*}
 {\mathrm{M}}_o'=\left[
\begin{array}{c|c|c}
{\mathrm{I}} & {\mathrm{T}}_{p,0} & {\mathrm{T}}_{p,0} \\
\hline
{\mathrm{T}}_{p,0}+{\mathrm{I}} & {\mathrm{O}} & {\mathrm{I}} \\
\hline
{\mathrm{T}}_{p,0}+{\mathrm{I}} & {\mathrm{I}} & {\mathrm{O}}
\end{array}
\right] 
\end{equation*}
}}
is invertible over $\mathbb{F}_2$. We apply Lemma \ref{schur} and part (h) of Lemma \ref{itemize} to conclude that $\mathrm{M}'_{o}$ is invertible over $\mathbb{F}_2$.
\end{proof}

\begin{proof}[Proof of Theorem \ref{377}]
Let $p, q \equiv 7 \pmod{8}$ be distinct primes, and let $r_1, r_2, \ldots, r_t$ be distinct primes congruent to $3 \pmod{8}$. Assume that these primes satisfy the conditions of Theorem \ref{377} with $\alpha = 1$. Note that, $t$ is odd here. Then 
{\scriptsize{
\begin{equation*}
 %\label{D377} 
{\mathrm{D}}_2=\left[
\begin{array}{c|c|c}
0 & 0 & {\mathrm{O}}_{1 \times t} \\
\hline
0 & 0 & {\mathrm{O}}_{1 \times t} \\
\hline
{\mathrm{O}}_{t\times 1} & {\mathrm{O}}_{t\times 1} & {\mathrm{I}}
\end{array}
\right]_,\quad {\mathrm{D}}_{-2}=\left[
\begin{array}{c|c|c}
1 & 0 & {\mathrm{O}}_{1\times t} \\
\hline
0 & 1 & {\mathrm{O}}_{1 \times t} \\
\hline
{\mathrm{O}}_{t \times 1} & {\mathrm{O}}_{t \times 1} & {\mathrm{O}}
\end{array}
\right], \quad {\mathrm{E}}\equiv_{2}\left[
\begin{array}{c|c|c}
1 & 1 & {\mathrm{O}}_{1\times t} \\
\hline
0 & 1 & {\mathrm{N}}_{1\times t}\\
\hline
{\mathrm{N}}_{t \times 1} & {\mathrm{O}}_{t \times 1} & {\mathrm{I}} + {\mathrm{T}}_{r,0}
\end{array}
\right].     
\end{equation*} 
}}
  are $(t+2)\times (t+2)$ matrices. Here ${\mathrm{O}}_{1\times t}$ (resp. ${\mathrm{O}}_{t\times 1}$) is a $1\times t$  matrix (resp. $t\times 1$ matrix) with each entry $0$. Let ${\mathrm{N}}_{1\times t}$ (resp. ${\mathrm{N}}_{t\times 1}$) is a $1\times t$  matrix (resp. $t\times 1$ matrix) with each entry $1$. Then using the definition of Monsky matrix as in \eqref{monskymat}  we have
{\scriptsize{
\begin{equation*} \label{M577}
 {\mathrm{M}}_o \equiv_{2}\left[
\begin{array}{c|c|c|c|c|c}
0 & 0 & {\mathrm{O}}_{1\times t} & 1 & 1 & {\mathrm{O}}_{1 \times t}\\
\hline
0 & 0 & {\mathrm{O}}_{1 \times t} & 0 & 1 & {\mathrm{N}}_{1\times t} \\
\hline
{\mathrm{O}}_{t \times 1} & {\mathrm{O}}_{t \times 1} & {\mathrm{I}} & {\mathrm{N}}_{t \times 1} & {\mathrm{O}}_{t \times 1} & {\mathrm{T}}_{r,0}\\
\hline
0 & 1 & {\mathrm{O}}_{1 \times t} & 0 & 0 & {\mathrm{O}}_{1 \times t}\\
\hline
0 & 0 & {\mathrm{N}}_{1 \times t} & 0 & 0 & {\mathrm{O}}_{1 \times t}\\
\hline
{\mathrm{N}}_{t \times 1} & {\mathrm{O}}_{t \times 1} & {\mathrm{I}}+{\mathrm{T}}_{r,0} & {\mathrm{O}}_{t \times 1} & {\mathrm{O}}_{t \times 1} & {\mathrm{I}}\\
\end{array}
\right]. 
\end{equation*}
}}
Recall from part (i) of Lemma \ref{itemize}, we have ${\mathrm{N}}_{1\times t} {\mathrm{T}}_{r,0} \equiv_{2} {\mathrm{O}}_{1 \times t}$ and ${\mathrm{T}}_{r,0} {\mathrm{N}}_{t\times 1}  \equiv_{2} {\mathrm{O}}_{t \times 1}$ as $t$ is odd.
Using Lemma \ref{schur}, part (h) and (i) of Lemma \ref{itemize} together with  
{\scriptsize{
\begin{equation*}
{\mathrm{A}}=\left[
\begin{array}{c|c|c|c|c}
0 & 0 & {\mathrm{O}}_{1\times t} & 1 & 1 \\
\hline
0 & 0 & {\mathrm{O}}_{1 \times t} & 0 & 1 \\
\hline
{\mathrm{O}}_{t \times 1} & {\mathrm{O}}_{t \times 1} & {\mathrm{I}} & {\mathrm{N}}_{t \times 1} & {\mathrm{O}}_{t \times 1} \\
\hline
0 & 1 & {\mathrm{O}}_{1 \times t} & 0 & 0 \\
\hline
0 & 0 & {\mathrm{N}}_{1 \times t} & 0 & 0 \\
\end{array}
\right],\; {\mathrm{B}}= \left[
\begin{array}{c}
{\mathrm{O}}_{1\times t} \\
\hline
{\mathrm{N}}_{1\times t} \\
\hline
{\mathrm{T}}_{r,0} \\
\hline
{\mathrm{O}}_{1\times t} \\
\hline
{\mathrm{O}}_{1\times t} \\
\end{array}
\right],\; {\mathrm{C}}=\left[
\begin{array}{c|c|c|c|c}
{\mathrm{N}}_{t \times 1} & {\mathrm{O}}_{t \times 1} & {\mathrm{I}}+{\mathrm{T}}_{r,0} & {\mathrm{O}}_{t\times 1} & {\mathrm{O}}_{t \times 1}\\
\end{array}
\right],\; {\mathrm{D}}={\mathrm{I}} ,
\end{equation*}
}}
we see that ${\mathrm{M}}_o$ is invertible over $\mathbb{F}_2$ if and only if the $(t+4) \times (t+4)$
{\scriptsize{
\begin{equation*}
 {\mathrm{M}}_o'=\left[
\begin{array}{c|c|c|c|c}
0 & 0 & {\mathrm{O}}_{1\times t} & 1 & 1 \\
\hline
1 & 0 & {\mathrm{N}}_{1 \times t} & 0 & 1  \\
\hline
{\mathrm{O}}_{t \times 1} & {\mathrm{O}}_{t \times 1} & {\mathrm{I}} & {\mathrm{N}}_{t \times 1} & {\mathrm{O}}_{t \times 1} \\
\hline
0 & 1 & {\mathrm{O}}_{1 \times t} & 0 & 0 \\
\hline
0 & 0 & {\mathrm{N}}_{1 \times t} & 0 & 0 \\
\end{array}
\right]. 
\end{equation*}
}}
is invertible over $\mathbb{F}_2$. Consider the permutation group $\mathrm{S}_k$ on $k$ distinct symbols, say $\{1,2,\ldots,k\}$. 
A permutation $\sigma \in \mathrm{S}_k$ acts on the columns $C_i$ (respectively, rows $R_i$) by
\[
\sigma \cdot C_i := C_{\sigma(i)} \quad \text{(respectively, } \sigma \cdot R_i := R_{\sigma(i)}\text{)}.
\]
Since $\mathrm{M}'_o$ is a square matrix of size $t+4$, we take $k = t+4$. 
Consider the permutation
\[
\sigma =
\begin{pmatrix}
1 & 2 & 3 & 4 & 5 & 6 & 7 & \cdots & t+3 & t+4 \\
1 & 2 & t+4 & t+3 & 3 & 4 & 5 & \cdots & t+1 & t+2
\end{pmatrix} \in S_{t+4}.
\]
Applying $\sigma$ to the columns of $\mathrm{M}'_o$, and subsequently to the rows, we obtain

%and get a matrix, say $\mathrm{M}_o'(C)$. Now we apply the same permutation $\sigma=(3 \,\, t+4 \,\, t+2  \, \ldots \, 5)(4 \,\, t+3 \,\, t+1  \, \ldots \, 6)$ on the rows of $\mathrm{M}_o'(C)$.} %Consider the permutation group $\mathrm{S}_n$ with $n$ distinct symbols, say $\{1,2,\ldots,n\}$. Denote the columns (respectively rows) of a $(t+4) \times (t+4)$ matrix by $C_1, C_2, \ldots, C_{t+4}$ (respectively $R_1, R_2, \ldots, R_{t+4}$). For $n=t+4$, we identify the symbols $\{1,2,\ldots,t+4\}$ with $\{C_1, C_2, \ldots, C_{t+4}\}$. Then we apply the permutation $(3 \,\, t+4 \,\, t+2  \, \ldots \, 5)(4 \,\, t+3 \,\, t+1  \, \ldots \, 6)$ on the columns of $\mathrm{M}_o'$. Next we identify $\{1,2,\ldots,t+4\}$ with $\{R_1, R_2, \ldots, R_{t+4}\}$ and apply the permutation $(3 \,\, t+4 \,\, t+2  \, \ldots \, 5)$ the rows. {\color{red}Now we first make the following column operations : replace $C_3$ by $C_{t+4}$, $C_4$ by $C_{t+3}$, and for all $5 \leq i \leq t+4$, replace the columns $C_i$ by $C_{i-2}$. Next we make the following row operations}:  replace $R_3$ by $R_{t+4}$, $R_4$ by $R_{t+3}$, then for all $5 \leq i \leq t+4$, we replace $R_i$ by $R_{i-2}$.
%By doing all these column and row operations, we get the following matrix :  %$C_{i} \to C_{i+t}$ for all $3 \leq i \leq t+2$, $C_{t+3} \to C_4, C_{t+4} \to C_3$, and then the following row operations:  $R_{i} \to R_{i+2}$ for all $3 \leq i \leq t+2$, $R_{t+3} \to R_4, R_{t+4} \to R_3$. %columns $C_3$ with $C_{t+4}$, $C_{4}$ with $C_{t+3}$, then swap the rows $R_3$ with $R_{t+4}$, $R_{4}$ with $R_{t+3}$. 
{\scriptsize{
\begin{equation*}
 {\mathrm{M}}_o^{\prime \prime}=\left[
\begin{array}{c|c|c|c|c}
0 & 0 & 1 & 1 & {\mathrm{O}}_{1\times t} \\
\hline
1 & 0 & 1 & 0 & {\mathrm{N}}_{1 \times t}  \\
\hline
0 & 0 &  0 & 0 & {\mathrm{N}}_{1 \times t} \\
\hline
0 & 1 & 0 & 0 & {\mathrm{O}}_{1 \times t} \\
\hline
{\mathrm{O}}_{t \times 1} & {\mathrm{O}}_{t \times 1} & {\mathrm{O}}_{t \times 1} & {\mathrm{N}}_{t \times 1} & {\mathrm{I}} \\
\end{array}
\right]. 
\end{equation*}
}}
Note that, rank of a matrix is invariant under permutations of rows and columns. Finally taking,
{\scriptsize{
\begin{equation*}
{\mathrm{A}}=\left[
\begin{array}{c|c|c|c}
0 & 0 & 1 & 1 \\
\hline
1 & 0 & 1 & 0  \\
\hline
0 & 0 & 0 & 0 \\
\hline
0 & 1 & 0 & 0 \\
\end{array}
\right], \quad {\mathrm{B}}= \left[
\begin{array}{c}
{\mathrm{O}}_{1\times t} \\
\hline
{\mathrm{N}}_{1\times t} \\
\hline
{\mathrm{N}}_{1\times t} \\
\hline
{\mathrm{O}}_{1\times t} \\
\end{array}
\right], \quad {\mathrm{C}}=\left[
\begin{array}{c|c|c|c}
{\mathrm{O}}_{t \times 1} & {\mathrm{O}}_{t \times 1} & {\mathrm{O}}_{t\times 1} & {\mathrm{N}}_{t \times 1} \\
\end{array}
\right], \quad {\mathrm{D}}={\mathrm{I}}
\end{equation*}
}}
and applying Lemma \ref{schur}, we see that ${\mathrm{M}}_o''$ is invertible over $\mathbb{F}_2$ since the matrix {\scriptsize{$\left[
\begin{array}{c|c|c|c}
0 & 0 & 1 & 1 \\
\hline
1 & 0 & 1 & 1  \\
\hline
0 & 0 & 0 & 1 \\
\hline
0 & 1 & 0 & 0 \\
\end{array}
\right]$}}
is invertible over $\mathbb{F}_2$. This shows that the Monsky matrix ${\mathrm{M}}_o$ is invertible over $\mathbb{F}_2$ since $\operatorname{rank}_{\F_2}{(\mathrm{M}_{o})} = \operatorname{rank}_{\F_2}{(\mathrm{M}'_{o})} = \operatorname{rank}_{\F_2}{(\mathrm{M}''_{o})}$. The case $\alpha =-1$, can be handled in similar fashion.
\end{proof}

\begin{proof}[Proof of Theorem \ref{533}]
 Let \( n=2\prod_{i=1}^{t} p_i q_i r_i \) be as in Theorem~\ref{533}. For simpler approach in the proof, we regard it as
$ n=2\left(\prod_{i=1}^{t} p_i\right)\left(\prod_{i=1}^{t} q_i\right)\left(\prod_{i=1}^{t} r_i\right) $. We present the proofs of Case~A and Case~B separately. For clarity, given a matrix \(\mathrm{Z}\), we denote by \(\mathrm{Z}(A)\) and \(\mathrm{Z}(B)\) its forms in Case~A and Case~B, respectively, whenever necessary. Note that \( \mathrm{D}_{rq} \equiv_{2} \mathrm{D}_{qr} + \mathrm{I} \), since \( \legendre{q_i}{r_i} = - \legendre{r_i}{q_i} \) for all \(i\). Furthermore, from \eqref{matrix D and E}, we have that, 
{\scriptsize{
 \begin{equation}
 \label{D533} 
{\mathrm{D}}_2=\left[
\begin{array}{c|c|c}
{\mathrm{I}} & {\mathrm{O}} & {\mathrm{O}} \\
\hline
{\mathrm{O}} & {\mathrm{I}} & {\mathrm{O}} \\
\hline
{\mathrm{O}} & {\mathrm{O}} & {\mathrm{I}} \\
\end{array}
\right],\quad {\mathrm{D}}_{-1}=\left[
\begin{array}{c|c|c}
{\mathrm{O}} & {\mathrm{O}} & {\mathrm{O}} \\
\hline
{\mathrm{O}} & {\mathrm{I}} & {\mathrm{O}} \\
\hline
{\mathrm{O}} & {\mathrm{O}} & {\mathrm{I}} \\
\end{array}
\right].     
\end{equation} 
}}

\noindent \textbf{Case A :}  Assume that  $\alpha=1$.  Therefore we have,
{\scriptsize{ 
 \begin{equation*}
{\mathrm{E}}(A)={\mathrm{E}} \equiv_{2} \left[
\begin{array}{c|c|c}
 {\mathrm{O}} & {\mathrm{O}} & {\mathrm{O}} \\
\hline
{\mathrm{O}} & {\mathrm{T}}_{q, t+1}+{\mathrm{D}}_{qr} & {\mathrm{N}}+{\mathrm{I}}+{\mathrm{D}}_{qr} \\
\hline
{\mathrm{O}} & {\mathrm{D}}_{qr}+{\mathrm{I}} & {\mathrm{T}}_{r,1}+{\mathrm{D}}_{qr} \\
\end{array}
\right].     
\end{equation*} 
}}
%\begin{equation}\label{E533}% \end{equation}   
By the definition of Monsky matrix as in \eqref{monskymat} we have, 
{\scriptsize{
\begin{equation*}
 {\mathrm{M}}_e(A)= {\mathrm{M}}_e \equiv_{2} \left[
\begin{array}{c|c|c|c|c|c}
{\mathrm{I}} & {\mathrm{O}} & {\mathrm{O}} & {\mathrm{I}} & {\mathrm{O}} & {\mathrm{O}} \\
\hline
{\mathrm{O}} & {\mathrm{I}} & {\mathrm{O}} & {\mathrm{O}} & {\mathrm{T}}_{q,t}+{\mathrm{D}}_{qr} & {\mathrm{N}}+{\mathrm{I}}+{\mathrm{D}}_{qr} \\
\hline
{\mathrm{O}} & {\mathrm{O}} & {\mathrm{I}} & {\mathrm{O}} & {\mathrm{D}}_{qr}+{\mathrm{I}} & {\mathrm{T}}_{r,0}+{\mathrm{D}}_{qr} \\
\hline
{\mathrm{I}} & {\mathrm{O}} & {\mathrm{O}} & {\mathrm{O}} & {\mathrm{O}} & {\mathrm{O}} \\
\hline
{\mathrm{O}} & {\mathrm{T}}_{q,t}^{\top}+{\mathrm{D}}_{qr} & {\mathrm{D}}_{qr}+{\mathrm{I}} & {\mathrm{O}} & {\mathrm{I}} & {\mathrm{O}} \\
\hline
{\mathrm{O}} & {\mathrm{N}}+{\mathrm{I}}+{\mathrm{D}}_{qr} & {\mathrm{T}}_{r,0}^{\top}+{\mathrm{D}}_{qr} & {\mathrm{O}} & {\mathrm{O}} & {\mathrm{I}} \\
\end{array}
\right]. 
\end{equation*}
}}
By Lemma~\ref{schur} (taking \(\mathrm{A}=\mathrm{D}_2\)) together with Lemma~\ref{itemize}, it follows that \(\mathrm{M}_e(A)\) is invertible over \(\mathbb{F}_2\) if and only if 
{\scriptsize{
\[
\mathrm{M}_e'(A)=
\left[\begin{array}{c|c|c}
\mathrm{I} & \mathrm{O} & \mathrm{O} \\
\hline
\mathrm{O} & \mathrm{S}_1 & \mathrm{S}_2 \\
\hline
\mathrm{O} & \mathrm{S}_3 & \mathrm{S}_4
\end{array}\right]
\]
}}
is invertible over \(\mathbb{F}_2\), where 
\begin{align*}
      {\mathrm{S}}_1 &\equiv_{2} {\mathrm{N}}+{\mathrm{T}}_{q,0}^{\top}{\mathrm{D}}_{qr}+{\mathrm{D}}_{qr}{\mathrm{T}}_{q,0}  \\ %$+{\mathrm{I}}$
     {\mathrm{S}}_2 &\equiv_{2} {\mathrm{N}}+{\mathrm{D}}_{qr}{\mathrm{N}} + {\mathrm{T}}_{q,t}^{\top}+{\mathrm{T}}_{q,t}^{\top}{\mathrm{D}}_{qr} + {\mathrm{D}}_{qr}{\mathrm{T}}_{r,0} + {\mathrm{T}}_{r,0},\\ 
     {\mathrm{S}}_3 &\equiv_{2} {\mathrm{S}}_2^{\top},\\
    {\mathrm{S}}_4 &\equiv_{2}{\mathrm{N}}+{\mathrm{N}}{\mathrm{D}}_{qr}+ {\mathrm{D}}_{qr}{\mathrm{N}}+{\mathrm{T}}_{r,0}^{\top}{\mathrm{D}}_{qr}+{\mathrm{D}}_{qr}{\mathrm{T}}_{r,0}. %$+{\mathrm{I}}$. 
\end{align*}
\noindent(i) Assume that $\mu=1$. Then we have ${\mathrm{D}}_{qr}={\mathrm{I}}$. Hence ${\mathrm{S}}_1 \equiv_{2} {\mathrm{S}}_4 \equiv_{2} {\mathrm{I}}$,\;\;\; ${\mathrm{S}}_2 \equiv_{2} \mathrm{O}$ by Lemma \ref{itemize}. In this case, 
    ${\mathrm{M}}_e' (A) \equiv_{2}$ {\scriptsize{ 
    $\left[\begin{array}{c|c|c} 
    {\mathrm{I}} & {\mathrm{O}} & {\mathrm{O}} \\ \hline 
    {\mathrm{O}} & {\mathrm{I}} & {\mathrm{O}} \\ \hline 
    {\mathrm{O}} & {\mathrm{O}} & {\mathrm{I}} \\ 
    \end{array}\right]$}} 
 which is invertible over $\F_2$.
    
\noindent (ii)  Assume that $\mu=-1$, which implies that $\mathrm{D}_{qr}=\mathrm{O}$. Moreover, the condition $\mu_1=\mu_2$ yields $\mathrm{T}_{q,0}=\mathrm{T}_{r,0}$. Consequently, we obtain $\mathrm{S}_1 \equiv_{2} \mathrm{S}_4 \equiv_{2} \mathrm{N}$ and $\mathrm{S}_2 \equiv_{2} (t+1)\mathrm{I}$ by Lemma \ref{itemize}. If $t=1$, then ${\mathrm{M}}_e'(A) \equiv_{2}$  {\scriptsize{ $\left[\begin{array}{c|c|c}
1 & 0 & 0 \\
\hline
0 & 1 & 0 \\
\hline
0 & 0 & 1 \\
\end{array}\right]$}}, which is invertible. If $t$ is even  then  ${\mathrm{M}}_e'(A)$ is equivalent to  {\scriptsize{$\left[\begin{array}{c|c|c}
{\mathrm{I}} & {\mathrm{O}} & {\mathrm{O}} \\
\hline
{\mathrm{O}} & {\mathrm{N}} & {\mathrm{I}} \\
\hline
{\mathrm{O}} & {\mathrm{I}} & {\mathrm{N}} \\
\end{array}\right]$}} modulo $2$. By Lemma \ref{determinant}, it is invertible over $\mathbb{F}_{2}$  since {\scriptsize{$\left[\begin{array}{c|c}
 {\mathrm{N}} & {\mathrm{I}} \\
\hline
 {\mathrm{I}} & {\mathrm{N}} \\
\end{array}\right]$}} is invertible over $\mathbb{F}_{2}$, (this is because ${\mathrm{N}}^2 \equiv_{2} t\mathrm{N} \equiv_{2} \mathrm{O}$ by Lemma \ref{itemize}, together with suitable elementary row operations).

 Now consider the case $ \mu_1 \neq \mu_2 $. Without loss of generality, assume that $ \mathrm{T}_{q,0}=\mathrm{U}_{0} $ and $ \mathrm{T}_{r,0}=\mathrm{L}_{0} $. Hence,
$ \mathrm{S}_1 \equiv_{2} \mathrm{S}_4 \equiv_{2} \mathrm{N} $, and
$ \mathrm{S}_2 \equiv_{2} \mathrm{N} + \mathrm{T}_{q,t}^{\top} + \mathrm{T}_{r,0} $. In fact,
$ \mathrm{T}_{q,t}^{\top} + \mathrm{T}_{r,0}
= \mathrm{U}_{0}^{\top} + \mathrm{L}_{0} + t\mathrm{I} \equiv_{2} \mathrm{I}$
by part (c) of Lemma \ref{UL-LU}. In this case, ${\mathrm{M}}_e'(A) \equiv $ {\scriptsize{$ \left[\begin{array}{c|c|c}
{\mathrm{I}} & {\mathrm{O}} & {\mathrm{O}} \\
\hline
{\mathrm{O}} & {\mathrm{N}} & {\mathrm{N}}+{\mathrm{I}} \\
\hline
{\mathrm{O}} & {\mathrm{N}}+{\mathrm{I}} & {\mathrm{N}} \\
\end{array}\right]$}} which is invertible over $\F_2$ (by Lemma \ref{schur}) if and only if {\scriptsize{$\left[\begin{array}{c|c}
{\mathrm{N}} & {\mathrm{N}}+{\mathrm{I}} \\
\hline
{\mathrm{N}}+{\mathrm{I}} & {\mathrm{N}} \\
\end{array}\right]$}} is invertible over  $\F_2$.  A basic column operation shows that the rank of
{\scriptsize{$\left[\begin{array}{c|c}
{\mathrm{N}} & {\mathrm{N}}+{\mathrm{I}} \\
\hline
{\mathrm{N}}+{\mathrm{I}} & {\mathrm{N}} \\
\end{array}\right]$}}
coincides with the rank of
{\scriptsize{$\left[\begin{array}{c|c}
{\mathrm{I}} & {\mathrm{N}}+{\mathrm{I}} \\
\hline
{\mathrm{I}} & {\mathrm{N}} \\
\end{array}\right]$}}
over $\mathbb{F}_2$. Moreover, by Lemma~\ref{schur}, the latter matrix is invertible over $\mathbb{F}_2$. This completes the proof for Case A. \\

\noindent \textbf{Case B:} Assume that $\alpha=-1$. Therefore, we have that 
{\scriptsize{
 \begin{equation*}
 %\label{D533} 
 {\mathrm{E}}(B) \equiv_{2} \left[
\begin{array}{c|c|c}
 {\mathrm{O}} & {\mathrm{I}} & {\mathrm{I}} \\
\hline
{\mathrm{I}} & {\mathrm{T}}_{q, t}+{\mathrm{D}}_{qr} & {\mathrm{N}}+{\mathrm{I}}+{\mathrm{D}}_{qr} \\
\hline
{\mathrm{I}} & {\mathrm{D}}_{qr}+{\mathrm{I}} & {\mathrm{T}}_{r,0}+{\mathrm{D}}_{qr} 
\end{array}
\right], \quad  \quad {\mathrm{M}}_e(B)=\begin{bmatrix}
\begin{array}{c c}
{\mathrm{D}}_2 & {\mathrm{E}}(B)+{\mathrm{D}}_2 \\
{\mathrm{E}}(B)^{\top}+{\mathrm{D}}_{2} & {\mathrm{D}}_{-1}
\end{array}
\end{bmatrix},  
\end{equation*} 
}}
where ${\mathrm{D}}_2, \; {\mathrm{D}}_{-1}$ are as in \eqref{D533}.
Therefore, by taking $\mathrm{D}=\mathrm{D}_{-1}$ and applying Lemma~\ref{schur}, we conclude that
\begin{align*}
 \det {\mathrm{M}}_e(B) & = \det \left({\mathrm{D}}_{-1}+({\mathrm{E}}(B)^{\top}+{\mathrm{D}}_{2})({\mathrm{E}}(B)+{\mathrm{D}}_2)\right) \\ 
   &=\det \left({\mathrm{D}}_{-1}+{\mathrm{E}}(B)^{\top}\mathrm{E}(B)+ {\mathrm{E}}(B)^{\top}+\mathrm{E}(B)+{\mathrm{D}}_2\right).
\end{align*}
Let ${\mathrm{M}}^{\prime}_e(B)=\left({\mathrm{D}}_{-1}+{\mathrm{E}}(B)^{\top}\mathrm{E}(B)+ {\mathrm{E}}(B)^{\top}+\mathrm{E}(B)+{\mathrm{D}}_2\right)$, Therefore, 
{\scriptsize{
$${\mathrm{M}}^{\prime}_e (B) \equiv_{2} \left[\begin{array}{c|c|c}
{\mathrm{I}} & {\mathrm{T}}_{q, t+1} &  {\mathrm{N}}+{\mathrm{I}}+{\mathrm{T}}_{r, 0} \\
\hline
{\mathrm{T}}_{q, t+1}^{\top} & {\mathrm{R}}_1 & {\mathrm{R}}_2 \\
\hline
{\mathrm{N}}+{\mathrm{I}}+{\mathrm{T}}_{r, 0}^{\top}  & {\mathrm{R}}_3 & {\mathrm{R}}_4 \\
\end{array}\right]$$ 
}}
where 
\begin{align*}
      {\mathrm{R}}_1 &\equiv_{2} {\mathrm{I}}+{\mathrm{T}}_{q,0}^{\top}{\mathrm{D}}_{qr}+{\mathrm{D}}_{qr}{\mathrm{T}}_{q,0},\\ %$+{\mathrm{I}}$
     {\mathrm{R}}_2 &\equiv_{2} {\mathrm{I}}+{\mathrm{N}}+{\mathrm{D}}_{qr}{\mathrm{N}} + {\mathrm{T}}_{q,t}^{\top}+ {\mathrm{T}}_{q,t}^{\top}\mathrm{N}+{\mathrm{T}}_{q,t}^{\top}{\mathrm{D}}_{qr} + {\mathrm{D}}_{qr}{\mathrm{T}}_{r,0} + {\mathrm{T}}_{r,0},\\ 
     {\mathrm{R}}_3 &\equiv_{2} {\mathrm{R}}_2^{\top},\\
    {\mathrm{R}}_4 &\equiv_{2}{\mathrm{I}}+{\mathrm{N}}{\mathrm{D}}_{qr}+ {\mathrm{D}}_{qr}{\mathrm{N}}+{\mathrm{T}}_{r,0}^{\top}{\mathrm{D}}_{qr}+{\mathrm{D}}_{qr}{\mathrm{T}}_{r,0}. %$+{\mathrm{I}}$. 
\end{align*}
\noindent(i) Assume that $\mu=-1$, which implies that $\mathrm{D}_{qr}=\mathrm{O}$. Consequently, we obtain
$\mathrm{R}_1 \equiv_{2} \mathrm{R}_4 \equiv_{2} \mathrm{I}$,\;\;\; $\mathrm{R}_2 \equiv_{2} \mathrm{I} + \mathrm{T}_{q,t}^{\top} + \mathrm{T}_{r,0}$. Again, applying Lemma~\ref{schur} with $\mathrm{A}=\mathrm{I}$, we deduce that $\mathrm{M}'_e(B)$ is invertible over $\mathbb{F}_2$ if and only if  ${\mathrm{M}}^{\prime \prime}_e(B)=$ {\scriptsize{ $\left[\begin{array}{c|c}
{\mathrm{I}} & {\mathrm{T}_{r, 0}}+ \mathrm{T_{q, t+1}^{\top}} {\mathrm{T}_{r, 0}}\\
\hline
\mathrm{T}_{r, 0}^{\top} + \mathrm{T}_{r, 0}^{\top} \mathrm{T_{q, t+1}} & {\mathrm{I}}
\end{array}\right]$}} is invertible over $\F_2$. Again, applying Lemma \ref{schur} on ${\mathrm{M}}^{\prime \prime}_e(B)$ (by considering $\mathrm{D}=\mathrm{I}$), we get that $\det {\mathrm{M}}^{\prime \prime}_e(B) = \det \left(\mathrm{I}+ ({\mathrm{T}_{r, 0}}+ \mathrm{T_{q, t+1}^{\top}} {\mathrm{T}_{r, 0}})(\mathrm{T}_{r, 0}^{\top} + \mathrm{T}_{r, 0}^{\top} \mathrm{T_{q, t+1}})\right)$. By Lemma \ref{itemize}, it can be seen that $({\mathrm{T}_{r, 0}}+ \mathrm{T_{q, t+1}^{\top}} {\mathrm{T}_{r, 0}})(\mathrm{T}_{r, 0}^{\top} + \mathrm{T}_{r, 0}^{\top} \mathrm{T_{q, t+1}}) \equiv_{2} \mathrm{O}$. Hence $\det {\mathrm{M}}^{\prime \prime}_e (B) = \det {\mathrm{M}}^{\prime}_e (B)= \det {\mathrm{M}}_e (B) \equiv_2 1$.

\vspace{0.5cm}

\noindent(ii) Assume that $\mu =1$, i.e., ${\mathrm{D}}_{qr}={\mathrm{I}}$ and that gives us ${\mathrm{R}}_1 \equiv_{2} {\mathrm{R}}_4 \equiv_{2} \mathrm{N}$,\;\; $\;{\mathrm{R}}_2 \equiv_{2} {\mathrm{I}}+ {\mathrm{N}}$ by Lemma \ref{itemize}.
Again applying Lemma \ref{schur} (by taking $\mathrm{A}=\mathrm{I}$), we can see that ${\mathrm{M}}^{\prime}_e (B)$ is invertible over $\F_{2}$ if and only if  ${\mathrm{M}}^{\prime \prime}_e(B)=$   {\scriptsize{$\left[\begin{array}{c|c}
{\mathrm{N}} &  {\mathrm{I}}+{\mathrm{N}}+\mathrm{T_{q, t+1}^{\top}}+ \mathrm{T_{q, t+1}^{\top}} {\mathrm{T}_{r, 0}}\\
\hline
 {\mathrm{I}}+{\mathrm{N}}+\mathrm{T_{q, t+1}}+ {\mathrm{T}}_{r, 0}^{\top} \mathrm{T_{q, t+1}}  & {\mathrm{N}}
\end{array}\right]$}} is invertible over $\F_2$.

Now assume that $\mu_{1}=\mu_{2}$, that is $\mathrm{T}_{q,0}=\mathrm{T}_{r, 0}$. And by Lemma \ref{itemize}, 
\begin{align*}
\mathrm{T_{q, t+1}^{\top}}+ \mathrm{T_{q, t+1}^{\top}} {\mathrm{T}_{r, 0}}& = \mathrm{T_{q, 0}^{\top}} {\mathrm{T}_{r, 0}}+ (t+1){\mathrm{T}_{r, 0}}+ (t+1)\mathrm{I}+ \mathrm{T_{q,0}^{\top}} \\ &\equiv_{2} t(\mathrm{N}+\mathrm{T}_{r, 0}+\mathrm{I})  \quad \text{(By Lemma \ref{itemize})}. 
\end{align*}
Therefore, if $t$ is even then ${\mathrm{M}}^{\prime \prime}_e (B)$  is congruent to {\scriptsize{$ \left[\begin{array}{c|c}
{\mathrm{N}} &  {\mathrm{I}}+{\mathrm{N}}\\
\hline
 {\mathrm{I}}+{\mathrm{N}} & {\mathrm{N}}
\end{array}\right]$}} modulo $2$ and it is invertible by Lemma~\ref{schur}, together with suitable elementary row and column operations. Also if $t=1$, then ${\mathrm{M}}^{\prime \prime}_e (B)=$ {\scriptsize{ $\left[\begin{array}{c|c}
1 &  0\\
\hline
 0 & 1
\end{array}\right]$}}, which is invertible.

Consider the case $\mu_{1} \neq \mu_{2}$, then without loss of generality assume that $\mathrm{T}_{q, 0}= \mathrm{U}_{0}$ and $\mathrm{T}_{r, 0}= \mathrm{L}_{0}$. Then 
\begin{align*}
   \mathrm{T_{q, t+1}^{\top}}+ \mathrm{T_{q, t+1}^{\top}} {\mathrm{T}_{r, 0}}& = \mathrm{U}_{0}^{\top} \mathrm{L}_{0}+ \mathrm{U}_{0}^{\top}+ (t+1)\mathrm{L}_{0}+ (t+1)\mathrm{I}= \mathrm{W},  \;\; \text{(say)}  
\end{align*}
and in fact, 
\begin{align*}
\mathrm{W}(i,j) &=
        \begin{cases}
            0  & \text{if } i>j,\\
            (t-i)(i-1)+ (t-i)+(t+1)(i-1)+(t+1)= i(2t-i+1) \equiv_{2} 0  & \text{if } i=j,\\
            t-2+(t+1)+1=2t \equiv_{2} 0  & \text{if } i<j,
        \end{cases}   
\end{align*}
by Lemma \ref{UL-LU}. Therefore, we have that ${\mathrm{M}}^{\prime \prime}_e (B) \equiv_{2}$  {\scriptsize{$\left[\begin{array}{c|c}
{\mathrm{N}} &  {\mathrm{I}}+{\mathrm{N}}\\
\hline
 {\mathrm{I}}+{\mathrm{N}} & {\mathrm{N}}
\end{array}\right]$}}  and it is invertible by Lemma~\ref{schur}, together with suitable elementary row and column operations. In the other case, namely when $ \mathrm{T}_{q,0}= \mathrm{L}_{0} $ and $ \mathrm{T}_{r,0}= \mathrm{U}_{0} $, a similar argument, combined with Lemma~\ref{UL-LU}, shows that $ \mathrm{M}''_{e}(B)$ is invertible. This completes the proof.
\end{proof}

\begin{proof}[Proof of Theorem \ref{1357}]
 For simplicity in the proof, we regard  $n$ as
$$ n=\left(\prod_{i=1}^{t} p_i\right)\left(\prod_{i=1}^{t} s_i\right)\left(\prod_{i=1}^{t} q_i\right)\left(\prod_{i=1}^{t} r_i\right). $$ In this case from \eqref{matrix D and E}, we have 
{\scriptsize{
\begin{equation*}
 %\label{D1357} 
{\mathrm{D}}_2=\left[
\begin{array}{c|c|c|c}
{\mathrm{O}} & {\mathrm{O}} & {\mathrm{O}} & {\mathrm{O}} \\
\hline
{\mathrm{O}} & {\mathrm{O}} & {\mathrm{O}} & {\mathrm{O}} \\
\hline
{\mathrm{O}} & {\mathrm{O}} & {\mathrm{I}} & {\mathrm{O}} \\
\hline 
{\mathrm{O}} & {\mathrm{O}} & {\mathrm{O}} & {\mathrm{I}} \\
\end{array}
\right],\quad {\mathrm{D}}_{-2}=\left[
\begin{array}{c|c|c|c}
{\mathrm{O}} & {\mathrm{O}} & {\mathrm{O}} & {\mathrm{O}} \\
\hline
{\mathrm{O}} & {\mathrm{I}} & {\mathrm{O}} & {\mathrm{O}} \\
\hline
{\mathrm{O}} & {\mathrm{O}} & {\mathrm{O}} & {\mathrm{O}} \\
\hline 
{\mathrm{O}} & {\mathrm{O}} & {\mathrm{O}} & {\mathrm{I}} \\
\end{array}
\right], \quad {\mathrm{E}}=\left[
\begin{array}{c|c|c|c}
 {\mathrm{I}} & {\mathrm{O}} & {\mathrm{I}} & {\mathrm{O}}\\
\hline
{\mathrm{O}} & {\mathrm{T}}_{s,1} & {\mathrm{O}} & {\mathrm{I}} \\
\hline
{\mathrm{I}} & {\mathrm{N}} & {\mathrm{T}}_{q,t+1} & {\mathrm{O}} \\
\hline
{\mathrm{O}} & {\mathrm{I}} & {\mathrm{O}} & {\mathrm{I}} \\
\end{array}
\right].    
\end{equation*}   
}}
 By the definition of Monsky matrix as in \eqref{monskymat}, we have  
{\scriptsize{
 \begin{equation*}
 {\mathrm{M}}_o\equiv_2\left[
\begin{array}{c|c|c|c|c|c|c|c}
{\mathrm{O}} & {\mathrm{O}} & {\mathrm{O}} & {\mathrm{O}} & {\mathrm{I}} & {\mathrm{O}} & {\mathrm{I}} & {\mathrm{O}}\\
\hline
{\mathrm{O}} & {\mathrm{O}} & {\mathrm{O}} & {\mathrm{O}} & {\mathrm{O}} & {\mathrm{T}}_{s,1} & {\mathrm{O}} & {\mathrm{I}} \\
\hline
{\mathrm{O}} & {\mathrm{O}} & {\mathrm{I}} & {\mathrm{O}} & {\mathrm{I}} & {\mathrm{N}} & {\mathrm{T}}_{q,t} & {\mathrm{O}}\\
\hline
{\mathrm{O}} & {\mathrm{O}} & {\mathrm{O}} & {\mathrm{I}} & {\mathrm{O}} & {\mathrm{I}} & {\mathrm{O}} & {\mathrm{O}} \\
\hline
{\mathrm{I}} & {\mathrm{O}} & {\mathrm{I}} & {\mathrm{O}} &{\mathrm{O}} & {\mathrm{O}} & {\mathrm{O}} & {\mathrm{O}} \\
\hline
{\mathrm{O}} & {\mathrm{T}}_{s,0} & {\mathrm{O}} & {\mathrm{I}} & {\mathrm{O}} & {\mathrm{O}} & {\mathrm{O}} & {\mathrm{O}}\\
\hline
{\mathrm{I}} & {\mathrm{N}} & {\mathrm{T}}_{q,t+1} & {\mathrm{O}} & {\mathrm{O}} & {\mathrm{O}} & {\mathrm{I}} & {\mathrm{O}} \\
\hline 
{\mathrm{O}} & {\mathrm{I}} & {\mathrm{O}} & {\mathrm{O}} & {\mathrm{O}} & {\mathrm{O}} & {\mathrm{O}} & {\mathrm{I}} \\
\end{array}
\right] \xrightarrow[\text{with ${\mathrm{R}}_{5+i}$ for $1 \leq i \leq t$}]{\text{swap the rows ${\mathrm{R}}_{3+i}$}}  \left[
\begin{array}{c|c|c|c|c|c|c|c}
{\mathrm{O}} & {\mathrm{O}} & {\mathrm{O}} & {\mathrm{O}} & {\mathrm{I}} & {\mathrm{O}} & {\mathrm{I}} & {\mathrm{O}}\\
\hline
{\mathrm{O}} & {\mathrm{O}} & {\mathrm{O}} & {\mathrm{O}} & {\mathrm{O}} & {\mathrm{T}}_{s,1} & {\mathrm{O}} & {\mathrm{I}} \\
\hline
{\mathrm{O}} & {\mathrm{O}} & {\mathrm{I}} & {\mathrm{O}} & {\mathrm{I}} & {\mathrm{N}} & {\mathrm{T}}_{q,t} & {\mathrm{O}}\\
\hline
{\mathrm{O}} & {\mathrm{T}}_{s,0} & {\mathrm{O}} & {\mathrm{I}} & {\mathrm{O}} & {\mathrm{O}} & {\mathrm{O}} & {\mathrm{O}}\\
\hline
{\mathrm{I}} & {\mathrm{O}} & {\mathrm{I}} & {\mathrm{O}} &{\mathrm{O}} & {\mathrm{O}} & {\mathrm{O}} & {\mathrm{O}} \\
\hline

{\mathrm{O}} & {\mathrm{O}} & {\mathrm{O}} & {\mathrm{I}} & {\mathrm{O}} & {\mathrm{I}} & {\mathrm{O}} & {\mathrm{O}} \\
\hline
{\mathrm{I}} & {\mathrm{N}} & {\mathrm{T}}_{q,t+1} & {\mathrm{O}} & {\mathrm{O}} & {\mathrm{O}} & {\mathrm{I}} & {\mathrm{O}} \\
\hline 
{\mathrm{O}} & {\mathrm{I}} & {\mathrm{O}} & {\mathrm{O}} & {\mathrm{O}} & {\mathrm{O}} & {\mathrm{O}} & {\mathrm{I}} \\
\end{array}
\right]. 
\end{equation*}
}}
By considering 
{\scriptsize{

\begin{equation*} 
{\mathrm{A}}=\left[
\begin{array}{c|c|c|c|c}
{\mathrm{O}} & {\mathrm{O}} & {\mathrm{O}} & {\mathrm{O}} & {\mathrm{I}} \\
\hline
{\mathrm{O}} & {\mathrm{O}} & {\mathrm{O}} & {\mathrm{O}} & {\mathrm{O}}  \\
\hline
{\mathrm{O}} & {\mathrm{O}} & {\mathrm{I}} & {\mathrm{O}} & {\mathrm{I}} \\
\hline
{\mathrm{O}} & {\mathrm{T}}_{s,0} & {\mathrm{O}} & {\mathrm{I}} & {\mathrm{O}} \\
\hline
{\mathrm{I}} & {\mathrm{O}} & {\mathrm{I}} & {\mathrm{O}} &{\mathrm{O}} \\
\end{array}
\right], {\mathrm{B}}=\left[
\begin{array}{c|c|c}
{\mathrm{O}} & {\mathrm{I}} & {\mathrm{O}} \\
\hline
{\mathrm{T}}_{s,1} & {\mathrm{O}} & {\mathrm{I}}\\
\hline
{\mathrm{N}} & {\mathrm{T}}_{q,t} & {\mathrm{O}}\\
\hline
{\mathrm{O}} & {\mathrm{O}} & {\mathrm{O}} \\
\hline
{\mathrm{O}} & {\mathrm{O}} & {\mathrm{O}}
\end{array}
\right],\quad {\mathrm{C}}=\left[
\begin{array}{c|c|c|c|c}
{\mathrm{O}} & {\mathrm{O}} & {\mathrm{O}} & {\mathrm{I}} & {\mathrm{O}}\\
\hline
{\mathrm{I}} & {\mathrm{N}} & {\mathrm{T}}_{q,t+1} & {\mathrm{O}} & {\mathrm{O}}\\
\hline
{\mathrm{O}} & {\mathrm{I}} & {\mathrm{O}} & {\mathrm{O}} & {\mathrm{O}}
\end{array}
\right], {\mathrm{D}}=\left[
\begin{array}{c|c|c}
{\mathrm{I}} & {\mathrm{O}} & {\mathrm{O}}  \\
\hline
{\mathrm{O}} & {\mathrm{I}} & {\mathrm{O}} \\
\hline 
{\mathrm{O}} & {\mathrm{O}} & {\mathrm{I}}  \\
\end{array}
\right]
\end{equation*} 
}}
and applying Lemma~\ref{schur} together with part (g) and (h) of Lemma~\ref{itemize}, we deduce that $\mathrm{M}_o$ is invertible over $\mathbb{F}_2$ if and only if

{\scriptsize{
 \begin{equation*}
 {\mathrm{M}}_o'=\left[
\begin{array}{c|c|c|c|c}
{\mathrm{I}} & {\mathrm{N}} & {\mathrm{T}}_{q,t+1} & {\mathrm{O}} & {\mathrm{I}} \\
\hline
{\mathrm{O}} & {\mathrm{I}} & {\mathrm{O}} &{\mathrm{T}}_{s,1} &  \mathrm{O}  \\
\hline
{\mathrm{T}}_{q,t} & {\mathrm{N}}
& {\mathrm{I}} & {\mathrm{N}} & {\mathrm{I}} \\
\hline
{\mathrm{O}} & {\mathrm{T}}_{s,0} & {\mathrm{O}} & {\mathrm{I}} & {\mathrm{O}}  \\
\hline
{\mathrm{I}} & {\mathrm{O}} & {\mathrm{I}} & {\mathrm{O}} &{\mathrm{O}}  \\
\end{array}
\right]. 
\end{equation*}
}}
is invertible over $\F_2$. We again apply Lemma \ref{schur} on ${\mathrm{M}}_o'$ by taking
{\scriptsize{${\mathrm{D}}=\left[
\begin{array}{c|c|c}
{\mathrm{I}} & {\mathrm{N}} & {\mathrm{I}} \\
\hline
{\mathrm{O}} & {\mathrm{I}} & {\mathrm{O}} \\
\hline
{\mathrm{I}} & {\mathrm{O}} & {\mathrm{O}}
\end{array}
\right]$}} (note that {\scriptsize{${\mathrm{D}}^{-1} \equiv_2 \left[
\begin{array}{c|c|c}
{\mathrm{O}} & {\mathrm{O}} & {\mathrm{I}} \\
\hline
{\mathrm{O}} & {\mathrm{I}} & {\mathrm{O}} \\
\hline
{\mathrm{I}} & {\mathrm{N}} & {\mathrm{I}}
\end{array}
\right]$}} ).  We see that ${\mathrm{M}}_o'$ is invertible over $\F_2$ if and only if $
{\mathrm{M}}^{\prime \prime}_{o}=\left[
\begin{array}{c|c}
{\mathrm{I}} & (t+1){\mathrm{N}} \\
\hline
{\mathrm{O}} & {\mathrm{I}} \\
\end{array}
\right]$ is invertible over $\F_2$. It is easy to see that ${\mathrm{M}}^{\prime \prime}_{o}$ is invertible over $\F_2$. Hence we conclude that ${\mathrm{M}}_o$ is invertible over $\F_2$.
\end{proof}

\begin{proof}[Proof of Theorem \ref{2times1357}]
    Let $n$ be given as in the statement of Theorem \ref{2times1357}. For simplicity in the proof, we regard  $n$ as
$$ n= 2\left(\prod_{i=1}^{t} p_i\right)\left(\prod_{i=1}^{t} q_i\right)\left(\prod_{i=1}^{t} r_i\right)\left(\prod_{i=1}^{t} s_i\right). $$ Then we have 

{\scriptsize{
\begin{equation*}
\label{D21357} 
{\mathrm{D}}_2=\left[
\begin{array}{c|c|c|c}
{\mathrm{O}} & {\mathrm{O}} & {\mathrm{O}} & {\mathrm{O}} \\
\hline
{\mathrm{O}} & {\mathrm{I}} & {\mathrm{O}} & {\mathrm{O}} \\
\hline
{\mathrm{O}} & {\mathrm{O}} & {\mathrm{I}} & {\mathrm{O}} \\
\hline 
{\mathrm{O}} & {\mathrm{O}} & {\mathrm{O}} & {\mathrm{O}} \\
\end{array}
\right],\quad 
{\mathrm{D}}_{-1}=\left[
\begin{array}{c|c|c|c}
{\mathrm{O}} & {\mathrm{O}} & {\mathrm{O}} & {\mathrm{O}} \\
\hline
{\mathrm{O}} & {\mathrm{O}} & {\mathrm{O}} & {\mathrm{O}} \\
\hline
{\mathrm{O}} & {\mathrm{O}} & {\mathrm{I}} & {\mathrm{O}} \\
\hline 
{\mathrm{O}} & {\mathrm{O}} & {\mathrm{O}} & {\mathrm{I}} \\
\end{array}
\right],  \quad 
{\mathrm{E}}=\left[
\begin{array}{c|c|c|c}
 {\mathrm{I}} & {\mathrm{I}} & {\mathrm{O}} & {\mathrm{O}}\\
\hline
{\mathrm{I}} & {\mathrm{I}} & {\mathrm{O}} & {\mathrm{O}} \\
\hline
{\mathrm{O}} & {\mathrm{O}} & {\mathrm{T}}_{r,t+1} & {\mathrm{N}}+{\mathrm{I}} \\
\hline
{\mathrm{O}} & {\mathrm{O}} & {\mathrm{I}} & {\mathrm{T}}_{s,1} \\
\end{array}
\right].    
\end{equation*} 
}}

By definition of Monsky matrix as in \eqref{monskymat}, we have
{\scriptsize{
\begin{equation*}
 {\mathrm{M}}_e\equiv_2\left[
\begin{array}{c|c|c|c|c|c|c|c}
{\mathrm{O}} & {\mathrm{O}} & {\mathrm{O}} & {\mathrm{O}} & {\mathrm{I}} & {\mathrm{I}} & {\mathrm{O}} & {\mathrm{O}}\\
\hline
{\mathrm{O}} & {\mathrm{I}} & {\mathrm{O}} & {\mathrm{O}} & {\mathrm{I}} & {\mathrm{O}} & {\mathrm{O}} & {\mathrm{O}} \\
\hline
{\mathrm{O}} & {\mathrm{O}} & {\mathrm{I}} & {\mathrm{O}} & {\mathrm{O}} & {\mathrm{O}} & {\mathrm{T}}_{r,t} & {\mathrm{N}}+{\mathrm{I}}\\
\hline
{\mathrm{O}} & {\mathrm{O}} & {\mathrm{O}} & {\mathrm{O}} & {\mathrm{O}} & {\mathrm{O}} & {\mathrm{I}} & {\mathrm{T}}_{s,1} \\
\hline
{\mathrm{I}} & {\mathrm{I}} & {\mathrm{O}} & {\mathrm{O}} &{\mathrm{O}} & {\mathrm{O}} & {\mathrm{O}} & {\mathrm{O}} \\
\hline
{\mathrm{I}} & {\mathrm{O}} & {\mathrm{O}} & {\mathrm{O}} & {\mathrm{O}} & {\mathrm{O}} & {\mathrm{O}} & {\mathrm{O}}\\
\hline
{\mathrm{O}} & {\mathrm{O}} & {\mathrm{T}}_{r,t}^{\top} & {\mathrm{I}} & {\mathrm{O}} & {\mathrm{O}} & {\mathrm{I}} & {\mathrm{O}} \\
\hline 
{\mathrm{O}} & {\mathrm{O}} & {\mathrm{N}}+{\mathrm{I}} & {\mathrm{T}}_{s,1}^{\top} & {\mathrm{O}} & {\mathrm{O}} & {\mathrm{O}} & {\mathrm{I}} \\
\end{array}
\right] 
%\xrightarrow[\forall i=1,2,\ldots, t\,]
%{\substack{
%\mathrm{R}_i \leftrightarrow \mathrm{R}_{5+i} \\
%\mathrm{R}_{i+1} \leftrightarrow \mathrm{R}_{4+i}\\}} }\right]. 
\end{equation*}
}}
 Next, we apply the following row operations to $\mathrm{M}_e$. First, interchange the rows $\mathrm{R}_i$ with $\mathrm{R}_{5+i}$ and $\mathrm{R}_{1+i}$ with $\mathrm{R}_{4+i}$ for $1 \leq i \leq t$. Then, for $1 \leq i \leq t$, replace the rows $\mathrm{R}_{5+i}$ by $\mathrm{R}_{4+i}+\mathrm{R}_{5+i}$. Consequently, we obtain
 {\scriptsize{
\begin{equation*}
 {\mathrm{M}}^{\prime}_e = \left[
\begin{array}{c|c|c|c|c|c|c|c}
{\mathrm{I}} & {\mathrm{O}} & {\mathrm{O}} & {\mathrm{O}} & {\mathrm{O}} & {\mathrm{O}} & {\mathrm{O}} & {\mathrm{O}}\\
\hline
{\mathrm{I}} & {\mathrm{I}} & {\mathrm{O}} & {\mathrm{O}} &{\mathrm{O}} & {\mathrm{O}} & {\mathrm{O}} & {\mathrm{O}} \\
\hline
{\mathrm{O}} & {\mathrm{O}} & {\mathrm{I}} & {\mathrm{O}} & {\mathrm{O}} & {\mathrm{O}} & {\mathrm{T}}_{r,t} & {\mathrm{N}}+{\mathrm{I}}\\
\hline
{\mathrm{O}} & {\mathrm{O}} & {\mathrm{O}} & {\mathrm{O}} & {\mathrm{O}} & {\mathrm{O}} & {\mathrm{I}} & {\mathrm{T}}_{s,1} \\
\hline
{\mathrm{O}} & {\mathrm{I}} & {\mathrm{O}} & {\mathrm{O}} & {\mathrm{I}} & {\mathrm{O}} & {\mathrm{O}} & {\mathrm{O}} \\
\hline

{\mathrm{O}} &  \mathrm{I} & {\mathrm{O}} & {\mathrm{O}} & {\mathrm{O}} & {\mathrm{I}} & {\mathrm{O}} & {\mathrm{O}}\\
\hline

{\mathrm{O}} & {\mathrm{O}} & {\mathrm{T}}_{r,t}^{\top} & {\mathrm{I}} & {\mathrm{O}} & {\mathrm{O}} & {\mathrm{I}} & {\mathrm{O}} \\
\hline 
{\mathrm{O}} & {\mathrm{O}} & {\mathrm{N}}+{\mathrm{I}} & {\mathrm{T}}_{s,1}^{\top} & {\mathrm{O}} & {\mathrm{O}} & {\mathrm{O}} & {\mathrm{I}} \\
\end{array}
\right] 
\end{equation*}
}}
Now taking 
{\scriptsize{
\begin{equation*}
{\mathrm{A}}=\left[
\begin{array}{c|c|c|c}
{\mathrm{I}} & {\mathrm{O}} & {\mathrm{O}} & {\mathrm{O}} \\
\hline
{\mathrm{I}} & {\mathrm{I}} & {\mathrm{O}} & {\mathrm{O}} \\
\hline
{\mathrm{O}} & {\mathrm{O}} & {\mathrm{I}} & {\mathrm{O}} \\
\hline 
{\mathrm{O}} & {\mathrm{O}} & {\mathrm{O}} & {\mathrm{O}} \\
\end{array}
\right],{\mathrm{B}}=\left[
\begin{array}{c|c|c|c}
{\mathrm{O}} & {\mathrm{O}} & {\mathrm{O}} & {\mathrm{O}} \\
\hline
{\mathrm{O}} & {\mathrm{O}} & {\mathrm{O}} & {\mathrm{O}} \\
\hline
{\mathrm{O}} & {\mathrm{O}} & {\mathrm{T}}_{r,t} & {\mathrm{N}}+{\mathrm{I}} \\
\hline 
{\mathrm{O}} & {\mathrm{O}} & {\mathrm{I}} & {\mathrm{T}}_{s,1} \\
\end{array}
\right],\quad 
{\mathrm{C}}=\left[
\begin{array}{c|c|c|c}
{\mathrm{O}} & {\mathrm{I}} & {\mathrm{O}} & {\mathrm{O}} \\
\hline
{\mathrm{O}} & {\mathrm{I}} & {\mathrm{O}} & {\mathrm{O}} \\
\hline
{\mathrm{O}} & {\mathrm{O}} & {\mathrm{T}}_{r,t}^{\top} & {\mathrm{I}}  \\
\hline 
{\mathrm{O}} & {\mathrm{O}} & {\mathrm{N}}+{\mathrm{I}} & {\mathrm{T}}_{s,1}^{\top} \\
\end{array}
\right], \quad 
{\mathrm{D}}=\left[
\begin{array}{c|c|c|c}
{\mathrm{I}} & {\mathrm{O}} & {\mathrm{O}} & {\mathrm{O}} \\
\hline
{\mathrm{O}} & {\mathrm{I}} & {\mathrm{O}} & {\mathrm{O}} \\
\hline
{\mathrm{O}} & {\mathrm{O}} & {\mathrm{I}} & {\mathrm{O}}  \\
\hline 
{\mathrm{O}} & {\mathrm{O}} & {\mathrm{O}} & {\mathrm{I}} \\
\end{array}
\right], 
\end{equation*}
}}
Applying Lemma \ref{schur} on $\mathrm{M}^{\prime}_e$, we see that $\mathrm{M}^{\prime}_e$ is invertible over $\F_2$ if and only if 
{\scriptsize{
\begin{equation*}
{\mathrm{M}_{e}^{\prime \prime}}=\left[
\begin{array}{c|c|c|c}
{\mathrm{I}} & {\mathrm{O}} & {\mathrm{O}} & {\mathrm{O}} \\
\hline
{\mathrm{I}} & {\mathrm{I}} & {\mathrm{O}} & {\mathrm{O}} \\
\hline
{\mathrm{O}} & {\mathrm{O}} & {\mathrm{T}}_{r,t}{\mathrm{T}}_{r,t}^{\top}+\mathrm{N}^{2} & {\mathrm{T}}_{r,t}+{
\mathrm{N}}{\mathrm{T}}_{s,1}^{\top}+{\mathrm{T}}_{s,1}^{\top}  \\
\hline 
{\mathrm{O}} & {\mathrm{O}} & {\mathrm{T}}_{r,t}^{\top}+{\mathrm{T}}_{s,1}{\mathrm{N}}+{\mathrm{T}}_{s,1} & {\mathrm{I}}+{\mathrm{T}}_{s,1}{\mathrm{T}}_{s,1}^{\top} \\
\end{array}
\right]
\end{equation*} 
}}
is invertible over $\F_2$.  Hence, by Lemma \ref{itemize}, we have $\mathrm{T}_{r,t}\mathrm{T}_{r,t}^{\top}+ \mathrm{N}^2  \equiv_2 \mathrm{O}$,
and $\mathrm{I}+\mathrm{T}_{s,1}\mathrm{T}_{s,1}^{\top} \equiv_2 \mathrm{N}+\mathrm{I} $. Moreover, by the hypothesis of the theorem, $\mathrm{T}_{r,0}=\mathrm{T}_{s,0}$. Consequently, $\mathrm{T}_{r,t}+\mathrm{N}\mathrm{T}_{s,1}^{\top}+\mathrm{T}_{s,1}^{\top} \equiv_2 t\mathrm{I}$. Since $t$ is odd, applying Lemma \ref{determinant} again, it follows that $\mathrm{M}^{\prime \prime }_{e}$ is invertible over $\mathbb{F}_2$, as required.
\end{proof}

\section{Infinite families and examples}\label{infinite families and examples}
In this section, we study the infinitude of the families appearing in our main results. Define
\begin{equation*}
S_{\ref{157}} (t) := \left\{ n = \prod_{i=1}^{t} p_i q_i r_i \;:\; n \text{ satisfies the conditions of Theorem \ref{157}} \right\}.    
\end{equation*}
Similarly, we define the sets $S_{\ref{355}}$ through $S_{\ref{2times1357}}$ corresponding to Theorems \ref{355} through \ref{2times1357}, respectively. We then have the following result:
\begin{theorem}
For $t \in \mathbb{N}$, the set $S_{\ref{157}} (t)$ is infinite.
\end{theorem}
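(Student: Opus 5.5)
We show that $S_{\ref{157}}(t)$ is infinite by building admissible prime triples one at a time, using the Chinese Remainder Theorem together with Dirichlet's theorem on primes in arithmetic progressions.

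Every condition in Theorem~\ref{157} is either a congruence modulo $8$ or a prescribed value of a Legendre symbol between two of the primes. By quadratic reciprocity, each symbol $\legendre{a}{b}$ with $a,b$ odd primes can be rewritten as $\pm\legendre{b}{a}$, where the sign depends only on $a,b \bmod 4$. Since all residues modulo $8$ are fixed in advance, every condition can be rephrased as a condition on the residue of the \emph{later-chosen} prime modulo each \emph{earlier-chosen} prime.

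We fix $\alpha$ and choose the primes in the order
\[
p_1,q_1,r_1,p_2,q_2,r_2,\ldots,p_t,q_t,r_t .
\]
When choosing a new prime $\ell$ with prescribed residue $\ell\equiv c\pmod 8$ ($c\in\{1,5,7\}$), the hypotheses of Theorem~\ref{157} prescribe, for each previously chosen prime $m$ that $\ell$ interacts with, a value $\epsilon_m\in\{\pm1\}$ for the symbol between $\ell$ and $m$. Conditions (a), (b), (c) touch each pair of primes at most once: for instance $(q_i,r_j)$ with $i\ne j$ and $(q_i,r_i)$ are unrestricted, except that (c) is imposed only in the order stated. Using reciprocity, and since $c$ and $m \bmod 4$ are known, each such condition becomes $\legendre{\ell}{m}=\epsilon'_m$ for a determined sign $\epsilon'_m$. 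For pairs that are unrestricted, we simply impose nothing.

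For each earlier $m$ with a condition, we pick a residue class $a_m \bmod m$ with $\legendre{a_m}{m}=\epsilon'_m$; this is possible because $m$ is odd, so both quadratic residues and non-residues exist. The Chinese Remainder Theorem combines $\ell\equiv c\pmod 8$ with $\ell\equiv a_m\pmod m$ for all relevant $m$ into a single class modulo $8\prod m$ that is coprime to the modulus. Dirichlet's theorem then gives infinitely many primes in this class, so we can take $\ell$ distinct from all earlier primes. Proceeding inductively yields a tuple satisfying all the hypotheses.

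This produces at least one $n$. To get infinitely many, note that at the final step there are infinitely many choices of $r_t$, and distinct choices give distinct square-free values of $n$. Hence $S_{\ref{157}}(t)$ is infinite.

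The main obstacle is bookkeeping. The hypotheses fix symbols such as $\legendre{p_i}{q_j}$ in a particular orientation, while the construction may choose $p_i$ after $q_j$ (when $j<i$), so every such condition must be converted with reciprocity. One must check that no pair is constrained twice in incompatible ways. The only potential clash would be a pair like $(p_i,q_i)$ appearing both in (a) and in (c), and (c) applies only to $i\neq j$, so no conflict arises.

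A more quantitative version, giving positive density in the sense of the paper, would follow from Chebotarev density, but the statement above only needs infinitude.
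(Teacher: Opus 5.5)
Your proposal is correct and is essentially the paper's argument. The paper only defers to the method of \cite[Theorem 2]{ds}, which chooses the primes inductively using the Chinese Remainder Theorem and Dirichlet's theorem; you carry out exactly that, using reciprocity to turn each Legendre condition into a condition modulo the earlier prime. One small slip in your bookkeeping remark: the pairs $(q_i,r_j)$ with $i\neq j$ are not unrestricted, since condition (c) forces $\legendre{q_i}{r_j}=1$. This does not affect the argument, because each unordered pair is still constrained at most once and your construction handles every prescribed pair the same way.
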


\begin{proof}
    The proof follows the similar method of \cite[Theorem 2]{ds} which uses the Dirichlet's Theorem of primes in arithmetic progression along with the Chinese remainder Theorem.
\end{proof}
In a similar manner, one can show that there exist infinitely many integers satisfying Theorems \ref{355} through \ref{2times1357}; in other words, each of the sets $S_{\ref{355}}, \ldots, S_{\ref{2times1357}}$ is infinite.

Let $ n \in S_{\ref{157}}(t) $, where the set $ S_{\ref{157}}(t) $ is defined as above. A natural problem is to investigate the asymptotic distribution of the elements of $ S_{\ref{157}}(t) $ within the set of positive integers. More precisely, one may ask for the asymptotic proportion of $ S_{\ref{157}}(t)$, which in turn provides information on the asymptotic proportion of non-congruent numbers satisfying the conditions of Theorem \ref{157}.
%Consider the set $S_\mathcal{F}:= \{n \in \mathbb{N}: n \text{ is square-free}\}$.
%Let $X \subseteq \mathbb{N}$. Then the density of $X$ is defined by $$d(X)= \lim_{x \to \infty}\frac{\# \{m \in X: m \leq x \}}{ \# \{m \in \mathbb{N}: m \leq x\}}$$
Let $k \geq 1$ and $G_k (x) \subseteq \mathbb{N}$ contains all the natural numbers $n \leq x$ such that $n=p_1p_2\cdots p_k$ where $p_i$'s are distinct primes. For two real valued functions $f(x)$ and $g(x)$, we write $f(x) \sim g(x)$ if $\lim\limits_{x \to \infty} \frac{f(x)}{g(x)}=1$. %By \cite[Theorem 6]{hw}, we have $d(X_1) \sim \frac{x}{\log x}$. 
The following result asymptotic proportion of $G_k(x)$.

\begin{theorem}
\cite[Theorem 437]{hw}
\label{6.2}
    Let $k\geq 1$. We have $|G_{k}(x)| \sim \frac{x(\log \log x)^{k-1}}{(k-1)!\log x}$.
\end{theorem}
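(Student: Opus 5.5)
This is the classical Landau theorem on integers with exactly $k$ distinct prime factors, and I would prove it by induction on $k$. Write $\pi_k(x):=|G_k(x)|$. The case $k=1$ is the prime number theorem, $\pi_1(x)\sim x/\log x$, and I take it as given.

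For the inductive step it is easier to work with weighted counts. Define
\begin{equation*}
\Pi_k(x):=\sum_{\substack{p_1\cdots p_k\le x\\ (p_1,\dots,p_k)\ \text{ordered, distinct}}} 1 = k!\,\pi_k(x),
\qquad
\vartheta_k(x):=\sum_{\substack{p_1\cdots p_k\le x\\ \text{ordered, distinct}}}\log(p_1\cdots p_k).
\end{equation*}
I would first show
\begin{equation*}
\vartheta_k(x)\sim k\,x(\log\log x)^{k-1}.
\end{equation*}
Splitting $\log(p_1\cdots p_k)=\sum_j\log p_j$ and using symmetry gives
\begin{equation*}
\vartheta_k(x)=k\sum_{\substack{p_2\cdots p_k\le x/2\\ \text{distinct}}}\ \sum_{\substack{p\le x/(p_2\cdots p_k)\\ p\ne p_j}}\log p .
\end{equation*}
Chebyshev's $\vartheta(y)\sim y$ evaluates the inner sum as roughly $x/(p_2\cdots p_k)$. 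The outer sum is then $\sum 1/(p_2\cdots p_k)$ over $(k-1)$-tuples with product at most $x$, which is $(\sum_{p\le x}1/p)^{k-1}+O(\cdot)\sim(\log\log x)^{k-1}$ by Mertens. Removing the distinctness constraint and the tuples where $x/(p_2\cdots p_k)$ is small only contributes lower-order powers of $\log\log x$.

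Next, partial summation converts $\vartheta_k$ to $\Pi_k$:
\begin{equation*}
\Pi_k(x)=\frac{\vartheta_k(x)}{\log x}+\int_2^x\frac{\vartheta_k(u)}{u\log^2 u}\,du .
\end{equation*}
The integral is $O\bigl(x(\log\log x)^{k-1}/\log^2 x\bigr)$, which is negligible. So $\Pi_k(x)\sim kx(\log\log x)^{k-1}/\log x$. Dividing by $k!$ gives
\begin{equation*}
\pi_k(x)\sim\frac{x(\log\log x)^{k-1}}{(k-1)!\log x}.
\end{equation*}

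The main obstacle is making the error control uniform in the first step. Two issues need care. First, the asymptotic $\vartheta(y)\sim y$ fails when $y=x/(p_2\cdots p_k)$ is small, so I would split at $p_2\cdots p_k\le x/\log x$. On the complementary range I would use the Chebyshev bound $\vartheta(y)\ll y$, and show that the sum of $1/(p_2\cdots p_k)$ over $x/\log x<p_2\cdots p_k\le x$ is $o\bigl((\log\log x)^{k-1}\bigr)$. Second, comparing the restricted reciprocal sum with $(\sum_{p\le x}1/p)^{k-1}$ requires the sandwich
\begin{equation*}
\Bigl(\sum_{p\le x^{1/(k-1)}}\frac1p\Bigr)^{k-1}\le\sum_{p_2\cdots p_k\le x}\frac{1}{p_2\cdots p_k}\le\Bigl(\sum_{p\le x}\frac1p\Bigr)^{k-1}.
\end{equation*}
Both outer sums are $(\log\log x+O(1))^{k-1}$ by Mertens, so they agree to leading order.
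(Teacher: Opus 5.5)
Your proposal is correct and is essentially the classical Hardy--Wright proof of Theorem 437: first $\vartheta_k(x)\sim kx(\log\log x)^{k-1}$ from $\vartheta(y)\sim y$ plus the Mertens sandwich for $\sum 1/(p_1\cdots p_k)$, then partial summation to $\Pi_k$ and division by $k!$. That cited argument is all the paper relies on, since it gives no proof of its own; your one deviation is cosmetic—restricting to ordered tuples of distinct primes gives $\Pi_k=k!\,\pi_k$ exactly instead of Hardy--Wright's comparison $k!\,\pi_k\le\Pi_k\le k!\,\tau_k$—and the exclusion $p\ne p_j$ this forces in the inner sum, which you only asserted is negligible, indeed is, since its total effect is at most $k\,\vartheta_{k-1}(x)\ll x(\log\log x)^{k-2}$ by the inductive hypothesis.
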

Using the above result, we have the following: 
\begin{theorem}
    Let $t \in \mathbb{N}$. Define $\bar{S}_{\ref{157}} (t)=\{n \in S_{\ref{157}} (t): n \leq x  \}$. Then 
    $$|\bar{S}_{\ref{157}} (t)|\sim \frac{1}{2^{\frac{9t^2+7t}{2}}}\cdot \frac{x(\log \log x)^{3t-1}}{(3t-1)!\log x}.$$
\end{theorem}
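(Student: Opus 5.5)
The plan is to write the defining conditions of $S_{\ref{157}}(t)$ as a product of indicator functions built from characters. The main term will then be a fixed power of $2$ times the count of Theorem \ref{6.2}, and every other term will be shown to be negligible. Concretely, an integer $n\le x$ with $3t$ distinct odd prime factors is labelled as $p_1,\dots,p_t,q_1,\dots,q_t,r_1,\dots,r_t$, and we impose the following.

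\emph{Congruence conditions.} Each of the $3t$ primes lies in a prescribed odd class mod $8$. The class of a prime $\ell$ is detected by the Dirichlet characters mod $8$,
\[
\mathbf 1_{\ell\equiv a\,(8)}=\tfrac14\sum_{\chi \bmod 8}\bar\chi(a)\chi(\ell).
\]
This contributes a factor $4^{-3t}=2^{-6t}$ to the main term.

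\emph{Legendre conditions.} Each condition $\legendre{a}{b}=\varepsilon$ is detected by $\tfrac12\bigl(1+\varepsilon\legendre{a}{b}\bigr)$. We count them in turn.
\begin{itemize}
\item Condition (a) gives $2t$ conditions.
\item Condition (b) gives $3\binom{t}{2}$ conditions; here $\alpha$ is fixed, or we sum over both values if the set is meant with $\alpha$ free.
\item Condition (c) gives $3t(t-1)$ conditions.
\end{itemize}
By quadratic reciprocity, once the classes mod $8$ are fixed each symbol $\legendre{a}{b}$ determines $\legendre{b}{a}$. So each unordered pair of primes carries exactly one condition, and the conditions are not redundant. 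The total number of factors $\tfrac12$ from (a)--(c) is $2t+\tfrac{9t(t-1)}{2}$.

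Adding the $6t$ from the congruence conditions gives
\[
6t+2t+\frac{9t^2-9t}{2}=\frac{9t^2+7t}{2},
\]
which is exactly the exponent in the statement.

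Expanding the product of all these indicators yields a main term plus a finite sum of character sums. The main term is $2^{-(9t^2+7t)/2}$ times the number of $n\le x$ with exactly $3t$ distinct prime factors (here we use that $n$ is odd). By Theorem \ref{6.2} with $k=3t$ this is asymptotic to $\frac{x(\log\log x)^{3t-1}}{(3t-1)!\log x}$. We must check that the ordering of the labels is compatible with the way $G_k(x)$ counts unordered products. Either the labelling is fixed canonically, or the labelled count is divided by the symmetry factor and one verifies that the resulting constant matches the statement. We also confirm that the restriction to odd $n$ does not change the leading asymptotic.

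Each remaining term has the form
\[
\sum_{\ell_1\cdots\ell_{3t}\le x}\ \prod_i \chi_i(\ell_i)\prod_{i<j}\legendre{\ell_i}{\ell_j}^{e_{ij}},
\]
where the $\chi_i$ are characters mod $8$ and the $e_{ij}\in\{0,1\}$ are not all trivial. These are the sums shown to be $o\bigl(x(\log\log x)^{3t-1}/\log x\bigr)$ in \cite[Theorem 2]{ds} and in the method of \cite{hb}. There are two cases.
\begin{itemize}
\item If some $\chi_i$ is nontrivial and the variable $\ell_i$ is not linked to the others, the Siegel--Walfisz theorem gives cancellation in $\ell_i$ directly.
\item If there are linked pairs $\legendre{\ell_i}{\ell_j}$, we split according to the sizes of the variables. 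When the smaller prime is below a power of $\log x$, Siegel--Walfisz applied to the larger variable suffices. When both are large, the double-oscillation (large sieve) bound for bilinear sums of Jacobi symbols, as in Heath-Brown's appendix, suffices.
\end{itemize}

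The main obstacle is carrying out these error estimates uniformly for a product of $3t$ primes, while keeping the main term at the $(\log\log x)^{3t-1}$ scale. The bounds must save more than the $(\log\log x)^{O(1)}$ losses that come from summing over the other variables. My first attempt would be to follow the combinatorial splitting of \cite{ds}: fix the large variable, apply Siegel--Walfisz in it, and then sum trivially over the remaining variables using the estimate of Theorem \ref{6.2} for fewer factors.
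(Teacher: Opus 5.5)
Your exponent bookkeeping agrees with the paper's. The paper inducts on $t$, charging $1/\phi(8)$ per prime and $3m$, $3m+1$, $3m+1$ new Legendre conditions to $p_{m+1},q_{m+1},r_{m+1}$. One small correction: it is not true that every unordered pair is constrained, since the $t$ pairs $\{q_i,r_i\}$ carry no condition; your total $2t+\frac{9t(t-1)}{2}$ is nevertheless right.

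The genuine gap is the step you deferred, ``one verifies that the resulting constant matches the statement'': it does not match.
\begin{itemize}
\item Your character expansion runs over \emph{ordered} $3t$-tuples of distinct primes. There are $(3t)!\,|G_{3t}(x)|$ of these with product $\le x$, so your main term is $(3t)!\,2^{-(9t^2+7t)/2}\,|G_{3t}(x)|$.
\item There is no symmetry factor to divide out, because for fixed $\alpha$ each $n\in S_{\ref{157}}(t)$ has exactly one admissible labelling. The classes $1,5,7 \pmod 8$ separate the $p$'s, $q$'s and $r$'s. By (a) and (c), $\legendre{p_i}{q_j}=-1$ and $\legendre{p_i}{r_j}=-1$ hold iff $i=j$, which fixes the triples. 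Since $r_i\equiv 3\pmod 4$ gives $\legendre{r_j}{r_i}=-\legendre{r_i}{r_j}$, requiring $\legendre{r_i}{r_j}=\alpha$ for all $i<j$ fixes the order of the indices.
\item If $\alpha$ is left free and summed over, the labelled count and the number of admissible (labelling, $\alpha$) pairs per $n$ both double, so nothing changes.
\end{itemize}
Carried out correctly, your method therefore gives
\[
|\bar S_{\ref{157}}(t)|\sim \frac{(3t)!}{2^{(9t^2+7t)/2}}\cdot\frac{x(\log\log x)^{3t-1}}{(3t-1)!\,\log x},
\]
which is $(3t)!$ times the stated asymptotic. The discrepancy is already visible at $t=1$ before any Legendre-symbol input: the proportion of $n\in G_3(x)$ whose prime factors lie in the classes $1,5,7\pmod 8$ in some order is $3!/4^3$, not $1/4^3$. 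Fixing a canonical labelling, say by size, does not rescue the statement. $S_{\ref{157}}(t)$ is defined by the existence of an admissible labelling, so imposing a size pattern counts a strictly smaller set.

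The paper's proof makes the same slip implicitly. It multiplies per-label densities ($\frac{1}{2^{3m}\phi(8)}$ for $p_{m+1}$, and so on) and applies the product directly to $|G_{3t}(x)|$, which counts unordered factorisations. Moreover, that heuristic is all it gives: there is no error analysis. Your character-sum framework is the right way to make such a count rigorous: Siegel--Walfisz when one linked prime is small, and Heath-Brown's large sieve for quadratic characters when both are large. As written, though, the error terms are only sketched. Also, \cite[Theorem 2]{ds} is an infinitude result via Dirichlet's theorem and the Chinese remainder theorem, not a bound for these sums. Completed, your argument would prove the corrected asymptotic above, not the statement as given.
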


\begin{proof}
    We prove this using induction on $t$. It is easy to see that the induction holds for $t=1$. Indeed, the probability for the prime $p_1$ is $\frac{1}{\phi(8)}$. Since $q_1 \not \equiv b_1^2 \pmod{p_1}$ for any $b_1$ in the residue class of $p_1$, the probability of $q_1$ is $\frac{\phi(p_1)/2}{\phi(8p_1)}=\frac{1}{2 \cdot \phi(8)}$. Similarly, the probability of $r_1$ is also  $\frac{\phi(p_1)/2}{\phi(8p_1)}=\frac{1}{2 \cdot \phi(8)}$. Hence, among integers having three distinct prime factors, the probability of such $n$ is given by $\frac{1}{\phi(8)} \cdot \frac{1}{\left(2\,\phi(8)\right)^2} = \frac{1}{2^8}$.

      Assume that the induction is true for $t=m$. Let $t=m+1$.  We observe that for each prime, apart from their residue class modulo $8$, the Legendre symbol condition contributes $\frac{1}{2}$ in the density factor. Since the choice of the prime $p_{m+1}$ depends on $p_i,q_i, r_i$ for all $1 \leq i \leq m$, the probabily of $p_{m+1}$ is $\frac{1}{2^{3m} \cdot \phi(8)}$. The choice of $q_{m+1}$ depends on $p_i,q_i, r_i$ for all $1 \leq i \leq m$ and $p_{m+1}$ with $q_{m+1} \not \equiv a_{m+1}^2 \pmod{p_{m+1}}$ for any $a_{m+1}$ in the residue class modulo $p_{m+1}$. Hence the probability of $q_{m+1}$ is $\frac{1}{2^{3m+1} \cdot \phi(8)}$. Finally, the choice of $r_{m+1}$ depends on $p_i,q_i, r_i$ for all $1 \leq i \leq m$, $p_{m+1}$ with $r_{m+1} \not \equiv b_{m+1}^2 \pmod{p_{m+1}}$ for any $b_{m+1}$ in the residue class modulo $p_{m+1}$. Hence, using the induction hypothesis, the probability of such $n$ is $$\frac{1}{2^{\frac{9m^2+7m}{2}}} \cdot \frac{1}{2^{3m} \cdot \phi(8)} \cdot \frac{1}{2^{3m+1} \cdot \phi(8)} \cdot \frac{1}{2^{3m+1} \cdot \phi(8)}=\frac{1}{2^{\frac{9(m+1)^2+7(m+1)}{2}}}.$$ Therefore, using Theorem \ref{6.2} we obtain our desired result.
\end{proof}

In a similar way, we can get analogous density results for numbers satisfying Theorem \ref{355} through \ref{2times1357}. We conclude this section by presenting some examples of non-congruent numbers that satisfy the hypotheses of our main results. Note that in Table \ref{tab:examples}, we list all the examples. In the third column, we take  $n = 2^{\epsilon}\prod_{i=1}^{t} p_i q_i r_i s_i,$
where $p_i$, $q_i$, $r_i$, and $s_i$ are primes. For Theorems \ref{157}--\ref{533}, we set $s_i = 1$. Also, for Theorem \ref{377}, we take $p_1 = p$ and $q_1 = q$, and set $p_i = q_i = 1$ for $i > 1$. Moreover, $\epsilon = 1$ for Theorems \ref{533} and \ref{2times1357}, and $\epsilon = 0$ otherwise. Finally, $r(n)$ denotes the algebraic rank of the elliptic curve $E_n: y^2 = x^3 - n^2 x$ over $\mathbb{Q}$. All computations are verified using SageMath \cite{sage}.

\begin{table}
\centering
\caption{Examples corresponding to the main results as in \S \ref{sott}.}
\label{tab:examples}
\begin{tabular}{|l|l|l|l|}
\hline
Theorem & Conditions on $t$, $\alpha$ and $\mu$ & $n=2^{\epsilon}\prod\limits_{i=1}^{t}p_{i}q_{i}r_{i}s_{i}$ & $r(n)$ \\                
\hline

\multirow{2}{*}{Theorem \ref{157}}
& $t=1$ & $(89 \cdot 13 \cdot 23)$ & $0$ \\ \cline{2-4}
& $t=2$, \;\;$\alpha=-1$ & $(17 \cdot 5 \cdot 7) (281 \cdot 389 \cdot 151)$ &  $0$ \\ 
\hline

\multirow{2}{*}{Theorem \ref{355}}
& $t=1$ \;\; 
& $(3 \cdot 5 \cdot 101)$ & $0$\\ \cline{2-4}
& $t=2$,\;\; $\mu=1$
& $(3 \cdot 5 \cdot 29) (59 \cdot 109 \cdot 349)$ & $0$\\ 
\hline

\multirow{2}{*}{Theorem \ref{377}}
& $t=1$,\;\; $\alpha=-1$ 
& $7 \cdot 23 \cdot  3$ & $0$\\ \cline{2-4}
& $t=3$,\;\; $\alpha=\mu=1$ 
& $7 \cdot 31 \cdot (11 \cdot 43 \cdot 347)$ & $0$\\ 
\hline

\multirow{2}{*}{Theorem \ref{533}}
& $t=1$,\;\; $\alpha=\mu=-1$ 
& $2(5 \cdot 3 \cdot 43)$ & $0$\\ \cline{2-4}
& $t=2$,\;\; $\alpha=\mu=1$ 
& $2(5 \cdot 11 \cdot 19) (229 \cdot 491 \cdot 1051)$ & $0$\\ 
\hline

Theorem \ref{1357}
& $t=1$ 
& $(17\cdot 3 \cdot 13 \cdot 47)$ & $0$ \\ 
\hline

Theorem \ref{2times1357}
& $t=1$ 
& $2 (17 \cdot 5 \cdot 19 \cdot 191)$ & $0$ \\ 
\hline

\end{tabular}
\end{table}

%\newpage

\renewcommand{\bibfont}{\footnotesize}
\printbibliography

@article {Li-Qin,
    AUTHOR = {Li, Guilin and Qin, Hourong},
     TITLE = {The {M}onsky matrices and non-congruent numbers},
   JOURNAL = {Algebra Colloq.},
  FJOURNAL = {Algebra Colloquium},
    VOLUME = {31},
      YEAR = {2024},
    NUMBER = {2},
     PAGES = {239--262},
      ISSN = {1005-3867,0219-1733},
   MRCLASS = {11G05 (11D25)},
  MRNUMBER = {4751376},
MRREVIEWER = {Jasbir\ Singh\ Chahal},
       DOI = {10.1142/S1005386724000191},
       URL = {https://doi.org/10.1142/S1005386724000191},
}

@book {Meyer,
    AUTHOR = {Meyer, Carl},
     TITLE = {Matrix analysis and applied linear algebra},
      NOTE = {With 1 CD-ROM (Windows, Macintosh and UNIX) and a solutions
              manual (iv+171 pp.)},
 PUBLISHER = {Society for Industrial and Applied Mathematics (SIAM),
              Philadelphia, PA},
      YEAR = {2000},
     PAGES = {xii+718},
      ISBN = {0-89871-454-0},
   MRCLASS = {15-01},
  MRNUMBER = {1777382},
       DOI = {10.1137/1.9780898719512},
       URL = {https://doi.org/10.1137/1.9780898719512},
}

@book {Silverman,
    AUTHOR = {Silverman, Joseph H.},
     TITLE = {The arithmetic of elliptic curves},
    SERIES = {Graduate Texts in Mathematics},
    VOLUME = {106},
 PUBLISHER = {Springer-Verlag, New York},
      YEAR = {1986},
     PAGES = {xii+400},
      ISBN = {0-387-96203-4},
   MRCLASS = {11G05 (14Gxx 14K07 14K15)},
  MRNUMBER = {817210},
MRREVIEWER = {Robert\ S.\ Rumely},
       DOI = {10.1007/978-1-4757-1920-8},
       URL = {https://doi.org/10.1007/978-1-4757-1920-8},
}

@article {hb,
    AUTHOR = {Heath-Brown, D. R.},
     TITLE = {The size of {S}elmer groups for the congruent number problem.
              {II}},
      NOTE = {With an appendix by P. Monsky},
   JOURNAL = {Invent. Math.},
  FJOURNAL = {Inventiones Mathematicae},
    VOLUME = {118},
      YEAR = {1994},
    NUMBER = {2},
     PAGES = {331--370},
      ISSN = {0020-9910,1432-1297},
   MRCLASS = {11G40 (11G05)},
  MRNUMBER = {1292115},
MRREVIEWER = {Fernando\ Q.\ Gouv\^ea},
       DOI = {10.1007/BF01231536},
       URL = {https://doi.org/10.1007/BF01231536},
}

@article {ds,
    AUTHOR = {Das, Shamik and Saikia, Anupam},
     TITLE = {Families of non-congruent numbers with arbitrarily many pairs
              of prime factors},
   JOURNAL = {Integers},
  FJOURNAL = {Integers. Electronic Journal of Combinatorial Number Theory},
    VOLUME = {20},
      YEAR = {2020},
     PAGES = {Paper No. A55, 12},
      ISSN = {1553-1732},
   MRCLASS = {11G05},
  MRNUMBER = {4130583},
MRREVIEWER = {J.\ Larry\ Lehman},
}

@book {hw,
    AUTHOR = {Hardy, G. H. and Wright, E. M.},
     TITLE = {An introduction to the theory of numbers},
   EDITION = {Fifth},
 PUBLISHER = {The Clarendon Press, Oxford University Press, New York},
      YEAR = {1979},
     PAGES = {xvi+426},
      ISBN = {0-19-853170-2},
   MRCLASS = {10-01},
  MRNUMBER = {568909},
MRREVIEWER = {T.\ M.\ Apostol},
}

@incollection {lagrange,
    AUTHOR = {Lagrange, Jean},
     TITLE = {Nombres congruents et courbes elliptiques},
 BOOKTITLE = {S\'eminaire {D}elange-{P}isot-{P}oitou (16e ann\'ee: 1974/75),
              {T}h\'eorie des nombres, {F}asc. 1},
     PAGES = {Exp. No. 16, 17},
 PUBLISHER = {Secr\'etariat Math., Paris},
      YEAR = {1975},
   MRCLASS = {10B05 (10A40 10B10 14G25)},
  MRNUMBER = {398973},
MRREVIEWER = {Loren\ D.\ Olson},
}

@article {bs,
    AUTHOR = {Birch, B. J. and Stephens, N. M.},
     TITLE = {The parity of the rank of the {M}ordell-{W}eil group},
   JOURNAL = {Topology},
  FJOURNAL = {Topology. An International Journal of Mathematics},
    VOLUME = {5},
      YEAR = {1966},
     PAGES = {295--299},
      ISSN = {0040-9383},
   MRCLASS = {14.40 (10.12)},
  MRNUMBER = {201379},
MRREVIEWER = {J.\ W. S. Cassels},
       DOI = {10.1016/0040-9383(66)90021-8},
       URL = {https://doi.org/10.1016/0040-9383(66)90021-8},
}

@book{genocchi1855note,
  title={Note analitiche sopra tre scritti inediti di Leonardo Pisano pubblicati da Baldassarre Boncompagni},
  author={Genocchi, Angelo},
  year={1855},
  publisher={Tip. delle Belle arti}
}

@article {lindsey,
    AUTHOR = {Reinholz, Lindsey and Yang, Qiduan},
     TITLE = {On the extension of even families of non-congruent numbers},
   JOURNAL = {Rend. Semin. Mat. Univ. Padova},
  FJOURNAL = {Rendiconti del Seminario Matematico della Universit\`a{} di
              Padova},
    VOLUME = {148},
      YEAR = {2022},
     PAGES = {23--49},
      ISSN = {0041-8994,2240-2926},
   MRCLASS = {11G05},
  MRNUMBER = {4542371},
MRREVIEWER = {Mirela\ Juki\'c{} Bokun},
       DOI = {10.4171/rsmup/105},
       URL = {https://doi.org/10.4171/rsmup/105},
}

@article {junguklee,
    AUTHOR = {Kim, Ringi and Lee, Junguk and Lee, Wan and Yu, Myungjun},
     TITLE = {Families of even non-congruent numbers with odd prime factors
              of the form {$8k+5$}},
   JOURNAL = {Tokyo J. Math.},
  FJOURNAL = {Tokyo Journal of Mathematics},
    VOLUME = {47},
      YEAR = {2024},
    NUMBER = {2},
     PAGES = {451--475},
      ISSN = {0387-3870},
   MRCLASS = {11G05 (05C50 15B33)},
  MRNUMBER = {4873812},
MRREVIEWER = {Matteo\ Verzobio},
       DOI = {10.3836/tjm/1502179415},
       URL = {https://doi.org/10.3836/tjm/1502179415},
}

@article {weidong,
    AUTHOR = {Cheng, Weidong and Guo, Xuejun},
     TITLE = {Some new families of non-congruent numbers},
   JOURNAL = {J. Number Theory},
  FJOURNAL = {Journal of Number Theory},
    VOLUME = {196},
      YEAR = {2019},
     PAGES = {291--305},
      ISSN = {0022-314X,1096-1658},
   MRCLASS = {11G05 (11C20 15A03 15B33)},
  MRNUMBER = {3906479},
MRREVIEWER = {Nikola\ Ad\v zaga},
       DOI = {10.1016/j.jnt.2018.09.009},
       URL = {https://doi.org/10.1016/j.jnt.2018.09.009},
}

@article {lindsey2,
    AUTHOR = {Reinholz, Lindsey and Spearman, Blair K. and Yang, Qiduan},
     TITLE = {An extension theorem for generating new families of
              non-congruent numbers},
   JOURNAL = {Funct. Approx. Comment. Math.},
  FJOURNAL = {Uniwersytet im. Adama Mickiewicza w Poznaniu. Wydzia\l\
              Matematyki i Informatyki. Functiones et Approximatio
              Commentarii Mathematici},
    VOLUME = {58},
      YEAR = {2018},
    NUMBER = {1},
     PAGES = {69--77},
      ISSN = {0208-6573,2080-9433},
   MRCLASS = {11G05},
  MRNUMBER = {3780034},
MRREVIEWER = {Andrea\ Ferraguti},
       DOI = {10.7169/facm/1641},
       URL = {https://doi.org/10.7169/facm/1641},
}

@article {ds2,
    AUTHOR = {Das, Shamik and Saikia, Anupam},
     TITLE = {Families of non-congruent numbers with arbitrarily many
              triplets of prime factors},
   JOURNAL = {Kyushu J. Math.},
  FJOURNAL = {Kyushu Journal of Mathematics},
    VOLUME = {78},
      YEAR = {2024},
    NUMBER = {1},
     PAGES = {119--128},
      ISSN = {1340-6116,1883-2032},
   MRCLASS = {11G05 (11A15)},
  MRNUMBER = {4770148},
}

@article {zhang,
    AUTHOR = {Zhang, Shenxing},
     TITLE = {On non-congruent numbers as multiples of non-congruent
              numbers},
   JOURNAL = {J. Algebra},
  FJOURNAL = {Journal of Algebra},
    VOLUME = {687},
      YEAR = {2026},
     PAGES = {394--418},
      ISSN = {0021-8693,1090-266X},
   MRCLASS = {11G05 (11R11 11R29 11R70 14H52)},
  MRNUMBER = {4963309},
MRREVIEWER = {Tomasz\ J\polhk edrzejak},
       DOI = {10.1016/j.jalgebra.2025.09.006},
       URL = {https://doi.org/10.1016/j.jalgebra.2025.09.006},
}

@article{dm-joaa,
    AUTHOR = {Das, Shamik and Mondal, Sudipa},
    TITLE = {A necessary condition for a congruent number of the form $8k+3$},
    JOURNAL = {J. Algebra},
    FJOURNAL = {Journal of Algebra},
    VOLUME = {700},
    YEAR = {2026},
    PAGES = {253--266},
    ISSN = {0021-8693},
    DOI = {10.1016/j.jalgebra.2026.04.009},
    URL = {https://www.sciencedirect.com/science/article/pii/S0021869326001961},
}

@article {Das-Saikia-Rocky,
    AUTHOR = {Das, Shamik and Saikia, Anupam},
     TITLE = {Families of even noncongruent numbers with arbitrarily many
              pairs of prime factors},
   JOURNAL = {Rocky Mountain J. Math.},
  FJOURNAL = {The Rocky Mountain Journal of Mathematics},
    VOLUME = {52},
      YEAR = {2022},
    NUMBER = {2},
     PAGES = {471--481},
      ISSN = {0035-7596,1945-3795},
   MRCLASS = {11G05 (11A15)},
  MRNUMBER = {4422950},
MRREVIEWER = {Jasbir\ Singh\ Chahal},
       DOI = {10.1216/rmj.2022.52.471},
       URL = {https://doi.org/10.1216/rmj.2022.52.471},
}

@manual{sage,
Key = {SAGE},
Author = {W.A. Stein and others},
Organization = {The Sage~Group},
Title = {{S}age {M}athematics {S}oftware ({V}ersion 9.3)},
Year = 2024}

@article {liqin,
    AUTHOR = {Li, Guilin and Qin, Hourong},
     TITLE = {Diophantine equations, class groups and non-congruent numbers},
   JOURNAL = {Ramanujan J.},
  FJOURNAL = {Ramanujan Journal. An International Journal Devoted to the
              Areas of Mathematics Influenced by Ramanujan},
    VOLUME = {62},
      YEAR = {2023},
    NUMBER = {4},
     PAGES = {1081--1105},
      ISSN = {1382-4090,1572-9303},
   MRCLASS = {11G05 (11D25)},
  MRNUMBER = {4667341},
MRREVIEWER = {Andrej\ Dujella},
       DOI = {10.1007/s11139-023-00742-0},
       URL = {https://doi.org/10.1007/s11139-023-00742-0},
}

\end{document}